\DeclareMathOperator{\RE}{Re} \DeclareMathOperator{\IM}{Im}
\numberwithin{equation}{section}
\newtheorem{theorem}{Theorem}[section]
\newtheorem{lemma}[theorem]{Lemma}
\newtheorem{corollary}[theorem]{Corollary}
\theoremstyle{remark}
\newtheorem{remark}[theorem]{Remark}
\newtheorem{definition}[theorem]{Definition}
\begin{document}

\title{Construction of subclasses of univalent harmonic mappings}

\thanks{The research work of the first author is supported by research fellowship from Council of Scientific and
Industrial Research (CSIR), New Delhi.}

\author[S. Nagpal]{Sumit Nagpal}
\address{Department of Mathematics, University of Delhi,
Delhi--110 007, India}
\email{sumitnagpal.du@gmail.com }

\author[V. Ravichandran]{V. Ravichandran}

\address{Department of Mathematics, University of Delhi,
Delhi--110 007, India \and
School of Mathematical Sciences,
Universiti Sains Malaysia, 11800 USM, Penang, Malaysia}
\email{vravi68@gmail.com}

\begin{abstract}
Complex-valued harmonic functions that are univalent and sense-preserving in the open unit disk are widely studied. A new methodology is employed to construct subclasses of univalent harmonic mappings from a given subfamily of univalent analytic functions. The notion of harmonic Alexander integral operator is introduced. Also, the radius of convexity for certain families of harmonic functions is determined.
\end{abstract}

\keywords{harmonic mappings, convolution, convex and starlike functions.}

\subjclass[2010]{30C80}

\maketitle

\section{introduction}
Let $\mathcal{H}$ denote the class of all complex-valued harmonic functions $f$ in the unit disk $\mathbb{D}=\{z \in \mathbb{C}:|z|<1\}$ normalized by $f(0)=0=f_{z}(0)-1=f_{\bar{z}}(0)$. Such functions can be written in the form $f=h+\bar{g}$, where
\begin{equation}\label{eq1.1}
h(z)=z+\sum_{n=2}^{\infty}a_{n}z^{n}\quad\mbox{and}\quad g(z)=\sum_{n=2}^{\infty}b_{n}z^n
\end{equation}
are analytic in $\mathbb{D}$. In 1984, Clunie and Sheil-Small \cite{cluniesheilsmall} investigated the subclass $\mathcal{S}_{H}^{0}$ of $\mathcal{H}$ consisting of univalent and sense-preserving functions, that is, for a function $f=h+\bar{g} \in \mathcal{S}_{H}^{0}$, $f$ is one-to-one in $\mathbb{D}$ and the Jacobian $J_{f}(z)=|h'(z)|^2-|g'(z)|^2$ is positive or equivalently $|g'(z)|<|h'(z)| $ in $\mathbb{D}$. The class $\mathcal{S}_{H}^{0}$ is a compact family with respect to the topology of locally uniform convergence. The classical family $\mathcal{S}$ of normalized analytic univalent functions is a subclass of $\mathcal{S}_{H}^{0}$. Let $\mathcal{S}_{H}^{*0}$, $\mathcal{K}_{H}^{0}$ and $\mathcal{C}_{H}^{0}$ be the subclasses of $\mathcal{S}_{H}^{0}$ mapping $\mathbb{D}$ onto starlike, convex and close-to-convex domains, respectively, just as $\mathcal{S}^{*}$, $\mathcal{K}$ and $\mathcal{C}$ are the subclasses of $\mathcal{S}$ mapping $\mathbb{D}$ onto their respective domains.

In \cite{sumit1} the authors investigated the properties of functions in the subclass $\mathcal{F}_{H}^{0} \subset \mathcal{C}_{H}^{0}$ defined by the condition $|f_{z}(z)-1|<1-|f_{\bar{z}}(z)|$ for all $z \in \mathbb{D}$. This subclass was closely related to the class $\mathcal{F} \subset \mathcal{C}$ introduced by MacGregor \cite{macgregor2} consisting of analytic functions which satisfy $|f'(z)-1|<1$ for $z \in \mathbb{D}$. The authors proved that a harmonic function $f=h+\bar{g} \in \mathcal{F}_{H}^{0}$ if and only if the analytic functions $h+\epsilon g$ belong to $\mathcal{F}$ for each $|\epsilon|=1$. Using this property, authors established the coefficient estimates, growth results, boundary behavior, convolution properties and sharp bound for radius of convexity and starlikeness for the class $\mathcal{F}_{H}^{0}$. This connection between the classes $\mathcal{F}$ and $\mathcal{F}_{H}^{0}$ has motivated to give the following definition which turns out to be a simple but an effective method in construction of subclasses of univalent harmonic mappings from a given subfamily of $\mathcal{S}$.

\begin{definition}\label{def}
Suppose that $\mathcal{G}$ is a subfamily of $\mathcal{S}$. Denote by $\mathcal{G}_{H}^{0}$ the class consisting of harmonic functions $f=h+\bar{g}$ for which $h+\epsilon g \in \mathcal{G}$ for each $|\epsilon|=1$, $h$ and $g$ being analytic functions in $\mathbb{D}$. We call $\mathcal{G}_{H}^{0}$ the \textbf{harmonic analogue} of $\mathcal{G}$ and write $\mathcal{G}\triangleright\mathcal{G}_{H}^{0}$.
\end{definition}

By Definition \ref{def}, it readily follows that $\mathcal{F}\triangleright \mathcal{F}_{H}^{0}$. If $\mathcal{G}_{H}^{0}$ is the harmonic analogue of $\mathcal{G}\subset \mathcal{S}$ then it is easy to see that $\mathcal{G}\subset \mathcal{G}_{H}^{0}$. Further properties of the harmonic analogue $\mathcal{G}_{H}^{0}$ for a subfamily $\mathcal{G}\subset \mathcal{S}$ are investigated in Section \ref{sec2}. In Section \ref{sec3}, the harmonic analogues of some well-known subclasses of $\mathcal{S}$ are determined and their respective properties are discussed.

Let $\mathcal{A}$ be the subclass of $\mathcal{H}$ consisting of normalized analytic functions. Let $\Lambda:\mathcal{A}\rightarrow \mathcal{A}$ be the Alexander integral operator \cite{alexander} defined by
\begin{equation}\label{eq1.2}
\Lambda[f](z)=\int_{0}^{z}\frac{f(t)}{t}\,dt.
\end{equation}
A well-known classical result implies that $\Lambda$ does not carry $\mathcal{S}$ into $\mathcal{S}$ (see \cite[Chapter 14]{goodman}). In the last section of this paper, the notion of a harmonic Alexander operator $\Lambda_{H}:\mathcal{H}\rightarrow\mathcal{H}$ is introduced and its properties are investigated. The radius of convexity for certain families of harmonic functions is also determined.

\section{Harmonic Analogue $\mathcal{G}_{H}^{0}$}\label{sec2}
In this section, we will investigate the properties of the harmonic analogue $\mathcal{G}_{H}^{0}$ for a subfamily $\mathcal{G}\subset \mathcal{S}$. For this, it will be necessary to discuss the notion of stable harmonic mappings introduced by Hern\'{a}ndez and Mart\'{i}n  in \cite{stable}. A sense-preserving harmonic mapping $f=h+\bar{g}$ is said to be stable univalent (resp. stable starlike, stable convex and stable close-to-convex) if all the mappings $f_{\lambda}=h+\lambda \bar{g}$ with $|\lambda|=1$ are univalent (resp. starlike, convex and close-to-convex) in $\mathbb{D}$. The following result was proved in \cite{stable}.

\begin{lemma}\label{lem}
A sense-preserving harmonic mapping $f=h+\bar{g}$ is stable univalent (resp. stable starlike, stable convex and stable close-to-convex) if and only if the analytic functions $F_{\lambda}=h+\lambda g$ are univalent (resp. starlike, convex and close-to-convex) in $\mathbb{D}$ for each $|\lambda|=1$.
\end{lemma}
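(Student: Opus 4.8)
The plan is to prove both implications for each of the four properties by means of two distinct mechanisms: an elementary algebraic identity for univalence, and the Clunie--Sheil-Small shearing principle for the remaining ``geometric'' properties. Throughout I would use that sense-preservation means $|g'|<|h'|$, so the dilatation of every $f_\lambda=h+\lambda\bar g$ equals $\lambda g'/h'$ and the analytic functions $F_\lambda=h+\lambda g$ have non-vanishing derivative $h'(1+\lambda g'/h')$; this keeps all members of both families locally univalent and lets me pass freely between them.

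For univalence I would argue by contraposition, using that a coincidence for one family forces a coincidence for the other with a matched unimodular parameter. Suppose $f_\lambda(z_1)=f_\lambda(z_2)$ with $z_1\neq z_2$, and write $u=h(z_1)-h(z_2)$, $W=g(z_1)-g(z_2)$, so that $u+\lambda\overline{W}=0$. If $W=0$ then $u=0$ as well, and $F_\mu(z_1)=F_\mu(z_2)$ for every $\mu$; otherwise $\mu:=\lambda\overline{W}/W$ has $|\mu|=1$ and $\mu W=\lambda\overline{W}$, whence $F_\mu(z_1)-F_\mu(z_2)=u+\mu W=u+\lambda\overline{W}=0$. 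Running the same computation in reverse, from $u+\mu W=0$ with $\lambda:=\mu W/\overline{W}$, produces a coincidence for $f_\lambda$. Thus the family $\{F_\lambda\}_{|\lambda|=1}$ is univalent if and only if the family $\{f_\lambda\}_{|\lambda|=1}$ is.

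For convexity I would invoke the shear theorem: a locally univalent, sense-preserving $f_\lambda=h+\lambda\bar g$ is univalent and convex in the direction $\phi$ precisely when the conformal shear $h-e^{2i\phi}\bar\lambda\,g$ is univalent and convex in the direction $\phi$. Since a univalent image is convex exactly when it is convex in every direction, $f_\lambda$ is convex iff $h-e^{2i\phi}\bar\lambda\,g$ is convex in the direction $\phi$ for all $\phi$. Putting $\nu=-e^{2i\phi}\bar\lambda$ turns this into a statement about $F_\nu=h+\nu g$, and the crucial point is that the two unimodular parameters decouple: given a target pair $(\nu_0,\phi_0)$ one solves $\lambda=-\overline{\nu_0}\,e^{2i\phi_0}$, so quantifying over all $\lambda$ in the stability hypothesis supplies exactly the directional freedom needed to deduce that each $F_\nu$ is convex in every direction, hence convex; the converse specialization is immediate. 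The starlike and close-to-convex cases follow the same template, with convexity-in-a-direction replaced respectively by the monotonicity of $\arg f_\lambda(re^{i\theta})$ and by the corresponding close-to-convex shear.

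I expect the main obstacle to lie in the non-univalent cases, specifically in making the directional correspondence and the decoupling rigorous: the shear theorem only ties $f_\lambda$ to $h+\nu g$ with $\nu$ locked to the chosen direction, so the entire argument hinges on the hypothesis ranging over all $|\lambda|=1$ to free the direction from $\nu$. The starlike case is the most delicate, since starlikeness is not ``directional'' in the sense of the shear theorem; there I would work directly from the boundary criterion $\RE\big((zh'-\lambda\bar z\,\overline{g'})/(h+\lambda\bar g)\big)\ge 0$ and reconcile it with the analytic criterion $\RE\big((zh'+\mu zg')/(h+\mu g)\big)\ge 0$ by again exploiting the freedom in $\lambda$, taking care over the strict-versus-nonstrict inequalities and the boundary values.
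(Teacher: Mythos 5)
The paper does not prove this lemma at all --- it is quoted verbatim from Hern\'{a}ndez and Mart\'{i}n \cite{stable} --- so your argument has to stand on its own. Two of your four cases do. The univalence case is correct and complete: the coincidence-matching identity $u+\lambda\overline{W}=0 \Leftrightarrow u+\mu W=0$ with $\mu=\lambda\overline{W}/W$ (and the degenerate case $W=0$ forcing $u=0$) transfers non-injectivity back and forth with a matched unimodular parameter, which is exactly the right mechanism. The convexity case is also sound: the general-direction shear theorem, the substitution $\nu=-e^{2i\phi}\bar\lambda$, the fact that a domain is convex iff it is convex in every direction, and the observation that quantifying over all $|\lambda|=1$ decouples $\nu$ from $\phi$ together yield both implications.

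The gaps are precisely in the two cases you compress into one sentence plus a caveat. For close-to-convexity there is no per-direction ``close-to-convex shear'' of if-and-only-if type for your template to act on: the relevant Clunie--Sheil-Small result is a one-way sufficiency statement (all $h+\epsilon g$ close-to-convex implies $h+\bar g$ close-to-convex), which gives only one implication, and close-to-convexity of a domain is not a conjunction over directions of any directional property, so the decoupling device has nothing to decouple. For starlikeness the reconciliation you propose cannot work pointwise: at a fixed $z$ the quantities $\RE\bigl((zh'-\lambda\overline{zg'})/(h+\lambda\bar g)\bigr)$ and $\RE\bigl((zh'+\mu zg')/(h+\mu g)\bigr)$ are the traces of two \emph{different} M\"{o}bius transformations on the unit circle; the image circles have radii proportional to $|zh'\overline{g}+\overline{zg'}h|$ and $|zh'g-zg'h|$ respectively (take $h(z)=z$, $g(z)=cz^{2}/2$ to see these differ by a factor of $3$), so no choice $\mu=\mu(\lambda,z)$ identifies the two conditions. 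Moreover the inequality you write down characterizes \emph{fully} starlike mappings, whereas stable starlikeness as defined requires univalence together with a starlike image; that stable starlike implies fully starlike is itself a theorem of \cite{stable}, not a definition, so even a successful reconciliation would prove a different statement. These two cases require genuinely global arguments (for instance, reducing the starlike case to the convex case through an Alexander-type correspondence), and as written your proposal establishes only the univalent and convex halves of the lemma.
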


Let $\mathcal{SS}_{H}^{0}$, $\mathcal{SS}_{H}^{*0}$, $\mathcal{SK}_{H}^{0}$ and $\mathcal{SC}_{H}^{0}$ be subclasses of $\mathcal{S}_{H}^{0}$ consisting of stable univalent, stable starlike, stable convex and stable close-to-convex mappings respectively. Then $\mathcal{S}^{*}\subset \mathcal{SS}_{H}^{*0} \subset \mathcal{S}_{H}^{*0}$, $\mathcal{K}\subset \mathcal{SK}_{H}^{0} \subset \mathcal{K}_{H}^{0}$ and $\mathcal{C}\subset \mathcal{SC}_{H}^{0} \subset \mathcal{C}_{H}^{0}$. Moreover $\mathcal{SK}_{H}^{0}\subset \mathcal{SS}_{H}^{*0} \subset \mathcal{SC}_{H}^{0} \subset \mathcal{SS}_{H}^{0}$. In view of Definition \ref{def} and Lemma \ref{lem}, it is easy to deduce that $\mathcal{SS}_{H}^{0}$, $\mathcal{SS}_{H}^{*0}$, $\mathcal{SK}_{H}^{0}$ and $\mathcal{SC}_{H}^{0}$ are harmonic analogues of $\mathcal{S}$, $\mathcal{S}^{*}$, $\mathcal{K}$ and $\mathcal{C}$ respectively

The first theorem is quite simple but a useful tool in the investigation of results regarding the harmonic analogue $\mathcal{G}_{H}^{0}$ for a subfamily $\mathcal{G}\subset \mathcal{S}$.
\begin{theorem}\label{th2.2}
Suppose that $\mathcal{G}\subset \mathcal{S}$ and $\mathcal{G}\triangleright \mathcal{G}_{H}^{0}$. Then
\begin{itemize}
  \item [$(i)$] $\mathcal{G}_{H}^{0} \subset \mathcal{SS}_{H}^{0}$;
  \item [$(ii)$] If $f \in \mathcal{S}\cap \mathcal{G}_{H}^{0}$, then $f \in \mathcal{G}$;
  \item [$(iii)$] If $f=h+\bar{g} \in \mathcal{G}_{H}^{0}$, then the harmonic mappings $f_{\lambda}=h+\lambda \bar{g} \in \mathcal{G}_{H}^{0}$ for each $|\lambda|=1$;
  \item [$(iv)$] If $\mathcal{J} \subset \mathcal{G}$, then $\mathcal{J}_{H}^{0} \subset \mathcal{G}_{H}^{0}$ where $\mathcal{J}_{H}^{0}$ is the harmonic analogue of $\mathcal{J}$.
\end{itemize}
\end{theorem}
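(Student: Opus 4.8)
The plan is to read all four assertions directly off Definition~\ref{def} and Lemma~\ref{lem}, the only genuinely substantive point being the verification of sense-preservation needed to invoke the latter. Throughout I will use that membership $h+\epsilon g\in\mathcal{G}\subset\mathcal{S}$ for every $|\epsilon|=1$ forces $h$ and $g$ to be normalized: evaluating $(h+\epsilon g)(0)=0$ and $(h+\epsilon g)'(0)=1$ for all unimodular $\epsilon$ yields $h(0)=g(0)=0$, $h'(0)=1$ and $g'(0)=0$, so that $f=h+\bar g$ has the standard normalization.

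For $(i)$, let $f=h+\bar g\in\mathcal{G}_{H}^{0}$. Since each $h+\epsilon g$ is univalent, its derivative $h'+\epsilon g'$ never vanishes in $\mathbb{D}$; as $\epsilon$ runs over the unit circle this says precisely that $|h'(z)|\neq|g'(z)|$ for every $z\in\mathbb{D}$ (if $g'(z)=0$ the inequality is clear, and otherwise $-h'(z)/g'(z)$ avoiding the unit circle forces $|h'(z)|\neq|g'(z)|$). Hence $z\mapsto|h'(z)|-|g'(z)|$ is continuous and nowhere zero on the connected set $\mathbb{D}$, and it is positive at the origin because $h'(0)=1$, $g'(0)=0$; therefore $|g'(z)|<|h'(z)|$ throughout $\mathbb{D}$ and $f$ is sense-preserving. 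Now $f$ is a sense-preserving harmonic mapping for which every $F_{\lambda}=h+\lambda g$ is univalent, so Lemma~\ref{lem} gives that $f$ is stable univalent, i.e. $f\in\mathcal{SS}_{H}^{0}$.

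Assertion $(ii)$ is a uniqueness-of-decomposition argument: if $f\in\mathcal{S}$ then $f$ is analytic, so in its representation $f=h+\bar g$ one has $f_{\bar z}=\overline{g'}\equiv0$, whence $g$ is constant and, being normalized, $g\equiv0$. Thus $f=h=h+\epsilon g\in\mathcal{G}$. For $(iii)$, I would write $f_{\lambda}=h+\lambda\bar g=h+\overline{\bar\lambda g}$, so that the co-analytic part of $f_{\lambda}$ is $\bar\lambda g$; the defining condition for $f_{\lambda}\in\mathcal{G}_{H}^{0}$ reads $h+\epsilon(\bar\lambda g)=h+(\epsilon\bar\lambda)g\in\mathcal{G}$ for all $|\epsilon|=1$, and since $\epsilon\mapsto\epsilon\bar\lambda$ permutes the unit circle this is exactly the hypothesis $f\in\mathcal{G}_{H}^{0}$. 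Finally $(iv)$ is immediate: if $f=h+\bar g\in\mathcal{J}_{H}^{0}$ then $h+\epsilon g\in\mathcal{J}\subset\mathcal{G}$ for each $|\epsilon|=1$, so $f\in\mathcal{G}_{H}^{0}$.

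The only step requiring care is the sense-preservation claim in $(i)$, since Lemma~\ref{lem} applies only to sense-preserving mappings; the rest is a matter of unwinding the definitions. I expect no hidden difficulty beyond making the connectedness argument for $|h'|-|g'|$ precise and recording the automatic normalization of $h$ and $g$ that underlies all four parts.
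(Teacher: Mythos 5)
Your proof is correct and follows essentially the same route as the paper's: normalization of $h$ and $g$ from the normalization of $\mathcal{G}$, Lemma~\ref{lem} for $(i)$, and direct unwinding of Definition~\ref{def} for $(ii)$--$(iv)$. The one place you go beyond the paper is the explicit verification that $f$ is sense-preserving before invoking Lemma~\ref{lem} (via the nonvanishing of $h'+\epsilon g'$ and connectedness of $\mathbb{D}$); the paper's proof silently assumes this hypothesis, so your addition is a genuine, and correct, tightening rather than a different approach.
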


\begin{proof}
Let $f=h+\bar{g} \in \mathcal{G}_{H}^{0}$. Then $h+\epsilon g \in \mathcal{G}$ for each $|\epsilon|=1$ which imply that $h(0)=g(0)=h'(0)-1=g'(0)$ using the normalization of $\mathcal{G}$. By Lemma \ref{lem} it follows that $f \in \mathcal{SS}_{H}^{0}$ since $h+\epsilon g$ is univalent for each $|\epsilon|=1$. This proves $(i)$. The part $(ii)$ follows immediately from Definition \ref{def}. Regarding the proof of $(iii)$, if $f=h+\bar{g} \in \mathcal{G}_{H}^{0}$ and $|\lambda|=1$, then it is easy to see that $h+\lambda \epsilon g \in \mathcal{G}$ for each $|\epsilon|=1$ so that $h+\lambda \bar{g} \in \mathcal{G}_{H}^{0}$. To prove $(iv)$, let $f=h+\bar{g} \in \mathcal{J}_{H}^{0}$. As $\mathcal{J}\triangleright \mathcal{J}_{H}^{0}$, $h+\epsilon g \in \mathcal{J}$ for each $|\epsilon|=1$. But $\mathcal{J} \subset \mathcal{G}$ so that $h+\epsilon g \in \mathcal{G}$ for each $|\epsilon|=1$ which shows that $f \in \mathcal{G}_{H}^{0}$. This completes the proof of the theorem.
\end{proof}

Theorem \ref{th2.2}$(ii)$ conveys that every analytic univalent function in $\mathcal{G}_{H}^{0}$ is a member of $\mathcal{G}$. Since the members of $\mathcal{G}_{H}^{0}$ are stable univalent by Theorem \ref{th2.2}$(i)$, we have the following corollary which follows by \cite[Theorem 4, p. 17]{stable}.

\begin{corollary}\label{cor2.3}
Suppose that $\mathcal{G}\subset \mathcal{S}$ and $\mathcal{G}\triangleright \mathcal{G}_{H}^{0}$. If $f=h+\bar{g} \in \mathcal{G}_{H}^{0}$, then the analytic mappings $F_{\mu}=h+\mu g$ are univalent in $\mathbb{D}$ for each $|\mu|\leq 1$. In particular, $h$ is univalent.
\end{corollary}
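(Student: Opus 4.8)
The plan is to bootstrap from the boundary information already in hand. By Theorem \ref{th2.2}$(i)$, every $f=h+\bar g\in\mathcal{G}_{H}^{0}$ is stable univalent, so Lemma \ref{lem} guarantees that $F_{\lambda}=h+\lambda g$ is univalent in $\mathbb{D}$ for each $|\lambda|=1$; moreover $f$ is sense-preserving, whence $|g'(z)|<|h'(z)|$ and in particular $h'(z)\neq 0$ throughout $\mathbb{D}$. What remains is to promote univalence from the circle $|\lambda|=1$ to the whole closed disk $|\mu|\leq 1$. The structural observation I would exploit is that, for fixed $z_{1},z_{2}\in\mathbb{D}$, the divided difference
\[
\phi_{z_{1},z_{2}}(\mu)=\frac{F_{\mu}(z_{1})-F_{\mu}(z_{2})}{z_{1}-z_{2}}=a+\mu b,\qquad a=\frac{h(z_{1})-h(z_{2})}{z_{1}-z_{2}},\quad b=\frac{g(z_{1})-g(z_{2})}{z_{1}-z_{2}},
\]
is an \emph{affine} function of $\mu$. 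Thus $F_{\mu}$ fails to be univalent exactly when $\phi_{z_{1},z_{2}}(\mu)=0$ for some distinct pair $z_{1},z_{2}$.

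Next I would read off the geometry of this affine map. As $\mu$ traces $|\mu|=1$, the image $\{a+\mu b\}$ is a circle centred at $a$ of radius $|b|$, and as $\mu$ fills $|\mu|\leq 1$ it fills the corresponding closed disk. Univalence of $F_{\lambda}$ on $|\lambda|=1$ says this circle misses the origin, which forces $|a|\neq|b|$ for every pair $z_{1}\neq z_{2}$. The closed disk avoids the origin --- equivalently $\phi_{z_{1},z_{2}}(\mu)\neq 0$ for all $|\mu|\leq 1$ --- precisely when $|a|>|b|$. Hence the entire corollary reduces to the single inequality
\[
\bigl|h(z_{1})-h(z_{2})\bigr|>\bigl|g(z_{1})-g(z_{2})\bigr|\qquad\text{for all distinct }z_{1},z_{2}\in\mathbb{D}.
\]

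The main obstacle, and the only genuine content, is ruling out the competing possibility $|a|<|b|$, since the boundary hypothesis alone yields merely $|a|\neq|b|$. Here I would run a connectedness argument. Regard $\Psi(z_{1},z_{2})=|a|-|b|$ as a function on $\mathbb{D}\times\mathbb{D}$: off the diagonal it is continuous and never vanishes (as $|a|\neq|b|$), while it extends continuously across the diagonal $z_{1}=z_{2}$ with value $|h'(z_{1})|-|g'(z_{1})|$, which is strictly positive by the sense-preserving condition. Therefore $\Psi$ is a non-vanishing continuous function on the connected set $\mathbb{D}\times\mathbb{D}$, so it keeps a constant sign, necessarily the positive one dictated by its diagonal values. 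This delivers the required inequality, hence $\phi_{z_{1},z_{2}}(\mu)\neq 0$ for all $|\mu|\leq 1$, and so $F_{\mu}$ is univalent for every $|\mu|\leq 1$. Specializing to $\mu=0$ gives $F_{0}=h$, which proves the final assertion.
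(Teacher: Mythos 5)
Your argument is correct, but it takes a very different route from the paper: the paper proves this corollary in one line, by observing that members of $\mathcal{G}_{H}^{0}$ are stable univalent (Theorem \ref{th2.2}$(i)$) and then quoting \cite[Theorem 4]{stable}, which asserts exactly that for a stable univalent mapping the sections $h+\mu g$ stay univalent for all $|\mu|\le 1$. What you have done is supply a self-contained proof of that imported result. Your steps all check out: the divided difference $\phi_{z_1,z_2}(\mu)=a+\mu b$ is affine in $\mu$; univalence of $h+\lambda g$ on $|\lambda|=1$ is equivalent to $|a|\neq|b|$ for every distinct pair, while univalence on the whole closed disk is equivalent to $|a|>|b|$; and the sign is forced by continuity of $\Psi=|a|-|b|$ on the connected set $\mathbb{D}\times\mathbb{D}$ (the divided differences extend holomorphically across the diagonal) together with the strictly positive diagonal values $|h'|-|g'|$, which come from the sense-preserving property built into stable univalence. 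The trade-off is that the paper's version is shorter but rests on an external preprint, whereas yours makes the corollary independent of that reference at the cost of reproving its key lemma; the inequality $|h(z_1)-h(z_2)|>|g(z_1)-g(z_2)|$ you isolate is in fact the characterization of stable univalence underlying the cited theorem.
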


Recall that convexity and starlikeness are hereditary properties for conformal mappings which do not extend to harmonic mappings (see \cite{duren}). Chuaqui, Duren and Osgood \cite{chuaqui} introduced the notion of fully starlike and fully convex functions that do inherit the properties of starlikeness and convexity respectively (see also \cite{sumit2}). A harmonic mapping $f$ of the unit disk $\mathbb{D}$ is fully convex if it maps every circle $|z|=r<1$ in a one-to-one manner onto a convex curve. Such a harmonic mapping $f$ with $f(0)=0$ is fully starlike if it maps every circle $|z|=r<1$ in a one-to-one manner onto a curve that bounds a domain starlike with respect to the origin. Applying Theorem \ref{th2.2}$(iv)$ and using the fact that stable starlike (resp. stable convex) mappings are fully starlike (resp. fully convex) (see \cite{stable,sumit2}), we have

\begin{corollary}\label{cor2.4}
Suppose that $\mathcal{G}\subset \mathcal{S}$ and $\mathcal{G}\triangleright \mathcal{G}_{H}^{0}$. If $\mathcal{G} \subset \mathcal{S}^{*}$ (resp. $\mathcal{G} \subset \mathcal{K}$), then members of $\mathcal{G}_{H}^{0}$ are fully starlike (resp. fully convex) in $\mathbb{D}$.
\end{corollary}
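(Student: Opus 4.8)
The plan is to deduce Corollary \ref{cor2.4} directly from Theorem \ref{th2.2}$(iv)$ together with the cited implication that stable starlike (resp.\ stable convex) harmonic mappings are fully starlike (resp.\ fully convex). The key observation is that the hypothesis $\mathcal{G}\subset\mathcal{S}^{*}$ allows me to compare the harmonic analogue $\mathcal{G}_{H}^{0}$ against the harmonic analogue of $\mathcal{S}^{*}$, which by the discussion preceding Theorem \ref{th2.2} is precisely the class $\mathcal{SS}_{H}^{*0}$ of stable starlike mappings.

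First I would record that $\mathcal{S}^{*}\triangleright\mathcal{SS}_{H}^{*0}$, as already noted in the paragraph following Lemma \ref{lem}. Now suppose $\mathcal{G}\subset\mathcal{S}^{*}$. Applying Theorem \ref{th2.2}$(iv)$ with $\mathcal{J}=\mathcal{G}$ and with the role of the larger family played by $\mathcal{S}^{*}$ (so that $\mathcal{G}\subset\mathcal{S}^{*}$ gives $\mathcal{G}_{H}^{0}\subset\mathcal{SS}_{H}^{*0}$), I conclude that every $f\in\mathcal{G}_{H}^{0}$ is a stable starlike harmonic mapping. The cited fact from \cite{stable,sumit2} then says such an $f$ is fully starlike, which is the desired conclusion. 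The convex case is entirely parallel: when $\mathcal{G}\subset\mathcal{K}$, the same application of Theorem \ref{th2.2}$(iv)$ gives $\mathcal{G}_{H}^{0}\subset\mathcal{SK}_{H}^{0}$, and stable convex mappings are fully convex.

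There is essentially no hard computational core here; the corollary is a packaging of two earlier facts. The one point that requires care is justifying the containment $\mathcal{G}_{H}^{0}\subset\mathcal{SS}_{H}^{*0}$ rigorously. Although Theorem \ref{th2.2}$(iv)$ is stated for two subfamilies of $\mathcal{S}$, I should confirm that it applies with $\mathcal{S}^{*}$ in the role of the ambient family and that $\mathcal{SS}_{H}^{*0}$ is indeed its harmonic analogue; both are guaranteed by the remarks immediately after Lemma \ref{lem}. Once these identifications are in place, the corollary follows, so I expect the only subtlety to be ensuring that the normalization and sense-preserving hypotheses implicit in the definition of stable starlikeness are met---but these are automatic for members of $\mathcal{G}_{H}^{0}$ by Theorem \ref{th2.2}$(i)$, which places $\mathcal{G}_{H}^{0}$ inside $\mathcal{SS}_{H}^{0}$.
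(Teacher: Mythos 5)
Your argument is exactly the paper's: apply Theorem \ref{th2.2}$(iv)$ with $\mathcal{S}^{*}$ (resp.\ $\mathcal{K}$) as the ambient family, use that $\mathcal{S}^{*}\triangleright\mathcal{SS}_{H}^{*0}$ (resp.\ $\mathcal{K}\triangleright\mathcal{SK}_{H}^{0}$) to get $\mathcal{G}_{H}^{0}\subset\mathcal{SS}_{H}^{*0}$ (resp.\ $\mathcal{G}_{H}^{0}\subset\mathcal{SK}_{H}^{0}$), and then invoke the cited fact that stable starlike (resp.\ stable convex) mappings are fully starlike (resp.\ fully convex). The proposal is correct and matches the paper's proof.
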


It is easy to see that if $\mathcal{I}$ and $\mathcal{J}$ are subclasses of $\mathcal{S}$ with $\mathcal{I}\triangleright\mathcal{I}_{H}^{0}$ and $\mathcal{J}\triangleright\mathcal{J}_{H}^{0}$, then $\mathcal{I}\cap\mathcal{J}\triangleright\mathcal{I}_{H}^{0}\cap\mathcal{J}_{H}^{0}$ and $\mathcal{I}\cup\mathcal{J}\triangleright\mathcal{I}_{H}^{0}\cup\mathcal{J}_{H}^{0}$. The next theorem determines the coefficient bounds for functions in the harmonic analogue $\mathcal{G}_{H}^{0}$.

\begin{theorem}\label{th2.5}
Suppose that $\mathcal{G}\subset \mathcal{S}$ and $\mathcal{G}\triangleright \mathcal{G}_{H}^{0}$. Let the Taylor coefficients $a_{n}(f)$ of the series of each $f \in \mathcal{G}$ satisfies $|a_n(f)|\leq p(n)$ for $n=2,3,\ldots$ where $p$ is a function of $n$. Then
\begin{itemize}
  \item [$(a)$] The respective Taylor coefficients $A_n(f)$ and $B_n(f)$ of the series of $h$ and $g$ of each function $f=h+\bar{g} \in \mathcal{G}_{H}^{0}$ satisfies
      \[||A_n(f)|-|B_n(f)||\leq p(n),\quad n=2,3,\ldots.\]
  \item [$(b)$] Let $h_0 \in \mathcal{G}$ be such that its Taylor coefficients satisfy $|a_n(h_0)|=p(n)$ for $n=2,3,\ldots$. Then for an analytic function $g_0$, the harmonic function $f_0=h_0+\bar{g}_0 \in \mathcal{G}_{H}^{0}$ if and only if $g_0\equiv 0$.
\end{itemize}
\end{theorem}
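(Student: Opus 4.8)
The plan is to exploit, for a function $f=h+\bar g\in\mathcal{G}_H^0$, the single defining feature that $h+\epsilon g\in\mathcal{G}$ for \emph{every} $|\epsilon|=1$, and to read off coefficient information by comparing the $n$-th Taylor coefficient of $h+\epsilon g$, namely $A_n(f)+\epsilon B_n(f)$, against the given majorant $p(n)$. Writing $h(z)=z+\sum_{n\ge2}A_n(f)z^n$ and $g(z)=\sum_{n\ge2}B_n(f)z^n$ (the normalization recorded in the proof of Theorem \ref{th2.2} forcing $g(0)=g'(0)=0$), the hypothesis $|a_n(h+\epsilon g)|\le p(n)$ becomes the family of inequalities $|A_n(f)+\epsilon B_n(f)|\le p(n)$, valid simultaneously for all $|\epsilon|=1$ and all $n\ge2$.

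For part $(a)$ I would invoke the elementary fact that, as $\epsilon$ runs over the unit circle, the point $A_n(f)+\epsilon B_n(f)$ traces the circle of radius $|B_n(f)|$ centred at $A_n(f)$, so its distance to the origin attains the minimum value $\big||A_n(f)|-|B_n(f)|\big|$. Choosing the particular $\epsilon$ realizing this minimum, the inequality $|A_n(f)+\epsilon B_n(f)|\le p(n)$ yields $\big||A_n(f)|-|B_n(f)|\big|\le p(n)$ at once. Selecting instead the $\epsilon$ that \emph{aligns} $\epsilon B_n(f)$ with $A_n(f)$ gives the sharper additive bound $|A_n(f)|+|B_n(f)|\le p(n)$, which is exactly the inequality I shall need to drive part $(b)$.

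For part $(b)$, the direction $g_0\equiv0\Rightarrow f_0\in\mathcal{G}_H^0$ is immediate, since then $f_0=h_0\in\mathcal{G}\subset\mathcal{G}_H^0$. For the converse, suppose $f_0=h_0+\bar g_0\in\mathcal{G}_H^0$ and write $A_n=a_n(h_0)$, $B_n=b_n(g_0)$, so that $|A_n|=p(n)$ by hypothesis. Fix $n\ge2$. If $p(n)=0$ the inequalities force $A_n=B_n=0$; otherwise $A_n\ne0$, and assuming $B_n\ne0$ I would take $\epsilon=A_n|B_n|/(|A_n|B_n)$, which has modulus one and makes $\epsilon B_n$ a positive multiple of $A_n$. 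Then $|A_n+\epsilon B_n|=|A_n|+|B_n|>|A_n|=p(n)$, contradicting $|A_n+\epsilon B_n|\le p(n)$. Hence $B_n=0$ for every $n\ge2$, i.e. $g_0\equiv0$.

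The argument is short, and its only delicate points are bookkeeping rather than substance: one must check that $g$ carries no constant or linear term (guaranteed by the normalization), and one must isolate the degenerate case $p(n)=0$ before dividing by $A_n$ in the choice of the aligning $\epsilon$. The conceptual heart of both parts is the same observation — that running $\epsilon$ over the whole unit circle converts a scalar bound on $\mathcal{G}$ into a two-sided (indeed additive) bound relating $|A_n(f)|$ and $|B_n(f)|$ — and the extremal choice of $\epsilon$ aligning $\epsilon B_n$ with $A_n$ is precisely what forces the rigidity statement in $(b)$.
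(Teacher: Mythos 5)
Your proposal is correct and follows essentially the same route as the paper: both extract $|A_n(f)+\epsilon B_n(f)|\le p(n)$ for all $|\epsilon|=1$ and then choose $\epsilon$ to realize the extremes $\bigl||A_n|-|B_n|\bigr|$ and $|A_n|+|B_n|$, the latter forcing $B_n=0$ in part $(b)$. You merely spell out the choice of the aligning $\epsilon$ that the paper leaves implicit.
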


\begin{proof}
Let $f=h+\bar{g} \in \mathcal{G}_{H}^{0}$. Then $h+\epsilon g \in \mathcal{G}$ for each $|\epsilon|=1$ so that $|a_n(h+\epsilon g)|\leq p(n)$ for $n=2,3,\ldots$. But $a_n(h+\epsilon g)=A_n(f)+\epsilon B_n(f)$ for $n=2,3,\ldots$, which proves $(a)$.

Regarding $(b)$, suppose that $f_0=h_0+\bar{g}_0 \in \mathcal{G}_{H}^{0}$. By the proof of part $(a)$ it is easy to deduce that
\[|A_n(f_0)|+|B_n(f_0)|\leq p(n) \quad \mbox{for} \quad n=2,3,\ldots.\]
But $|A_n(f_0)|=|a_n(h_0)|=p(n)$ for $n=2,3,\ldots$ so that $B_n(f_0)=0$ for $n=2,3,\ldots$. Thus $g_0 \equiv 0$. The converse part is obvious.
\end{proof}

The next theorem determines the upper and lower bounds on the growth of a harmonic mapping in $\mathcal{G}_{H}^{0}$.
\begin{theorem}\label{th2.6}
Suppose that $\mathcal{G}\subset \mathcal{S}$ and $\mathcal{G}\triangleright \mathcal{G}_{H}^{0}$. If
\[P(|z|) \leq |f'(z)| \leq Q(|z|),\quad z\in \mathbb{D}\]
for each $f \in \mathcal{G}$ where $P$ and $Q$ are integrable functions of $|z|$, then each $f \in \mathcal{G}_{H}^{0}$ satisfies
\[\int_{0}^{|z|} P(\rho)\, d\rho \leq |f(z)|\leq \int_{0}^{|z|} Q(\rho)\, d\rho, \quad z \in \mathbb{D}.\]
In particular, the range of every function $f \in \mathcal{G}_{H}^{0}$ contains the disk
\[\left\{w \in \mathbb{C}:|w|<\lim_{|z|\rightarrow 1} \int_{0}^{|z|} P(\rho)\, d\rho\right\},\]
provided the limit exists.
\end{theorem}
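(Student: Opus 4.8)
The plan is to reduce everything to the given bounds on the analytic functions $h'$ and $g'$ and then integrate. Fix $f=h+\bar g\in\mathcal{G}_H^0$; by definition $h+\epsilon g\in\mathcal{G}$ for every $|\epsilon|=1$, so the hypothesis gives $P(|z|)\le |h'(z)+\epsilon g'(z)|\le Q(|z|)$ throughout $\mathbb{D}$ for all such $\epsilon$. Choosing $\epsilon$ so that $\epsilon g'(z)$ aligns with $h'(z)$ shows $|h'(z)|+|g'(z)|\le Q(|z|)$, while choosing $\epsilon$ to oppose them shows $\bigl||h'(z)|-|g'(z)|\bigr|\ge P(|z|)$; since $f$ is sense-preserving (indeed $f\in\mathcal{SS}_H^0$ by Theorem~\ref{th2.2}$(i)$) the latter reads $|h'(z)|-|g'(z)|\ge P(|z|)$. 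These two pointwise inequalities are the whole engine of the proof.

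For the upper bound I would integrate along the radius. Writing $f(z)=\int_0^1\frac{d}{dt}f(tz)\,dt$ and using $\frac{d}{dt}f(tz)=z\,h'(tz)+\bar z\,\overline{g'(tz)}$, the triangle inequality gives $\bigl|\frac{d}{dt}f(tz)\bigr|\le |z|\bigl(|h'(tz)|+|g'(tz)|\bigr)\le |z|\,Q(t|z|)$. Integrating in $t$ and substituting $\rho=t|z|$ yields $|f(z)|\le\int_0^{|z|}Q(\rho)\,d\rho$ at once.

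The main obstacle is the lower bound, because the same radial integration only produces a lower bound for the \emph{length} of the image of the radius, which over-estimates $|f(z)|$ and so points the wrong way. To fix this I would invoke univalence. Fix $z_0$ with $|z_0|=r_0$, put $w_0=f(z_0)$, and consider the straight segment $[0,w_0]$. If this segment lies in the image $\Omega=f(\mathbb{D})$, its $f$-preimage $\Gamma$ is an arc in $\mathbb{D}$ from $0$ to $z_0$, and along $\Gamma$ one has $|df|\ge(|h'|-|g'|)|dz|\ge P(|z|)\,|dz|$. Since $|dz|\ge |d|z||$ and $P\ge0$, projecting onto the radial direction gives $|w_0|=\int_{[0,w_0]}|dw|=\int_\Gamma|df|\ge\int_0^{r_0}P(\rho)\,d\rho$. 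If instead the segment leaves $\Omega$, let $w_1\in\partial\Omega$ be its first exit point; then $|w_1|<|w_0|$, the preimage of $[0,w_1)$ runs from $0$ out to $\partial\mathbb{D}$, and the same estimate gives $|w_0|>|w_1|\ge\int_0^1 P(\rho)\,d\rho\ge\int_0^{r_0}P(\rho)\,d\rho$, again using $P\ge0$. In either case $|f(z_0)|\ge\int_0^{r_0}P(\rho)\,d\rho$.

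Finally, the covering statement comes from running the exit argument at the nearest boundary point: if $w^\ast\in\partial\Omega$ realizes $\operatorname{dist}(0,\partial\Omega)$, then $[0,w^\ast)\subset\Omega$, its preimage reaches $\partial\mathbb{D}$, and the estimate yields $\operatorname{dist}(0,\partial\Omega)=|w^\ast|\ge\int_0^1 P(\rho)\,d\rho=\lim_{|z|\to1}\int_0^{|z|}P(\rho)\,d\rho$, so $\Omega$ contains the asserted disk. The only points needing care are the justification that the relevant preimage curves reach $\partial\mathbb{D}$ (a consequence of $f$ being a homeomorphism of $\mathbb{D}$ onto $\Omega$) and the sign condition $P\ge0$, which is implicit in $P$ being a lower bound for a modulus.
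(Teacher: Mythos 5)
Your proof is correct and follows essentially the same route as the paper's: extract the pointwise bounds $P(|z|)\le|h'(z)|-|g'(z)|$ and $|h'(z)|+|g'(z)|\le Q(|z|)$ from $h+\epsilon g\in\mathcal{G}$, integrate along the radial segment for the upper bound, and along the preimage of the segment from $0$ to $f(z)$ for the lower bound. Your case analysis for when that segment exits $f(\mathbb{D})$, and your explicit derivation of the covering disk, are careful refinements of steps the paper passes over silently, but they do not constitute a different method.
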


\begin{proof}
Let $f=h+\bar{g} \in \mathcal{G}_{H}^{0}$. Then $h+\epsilon g \in \mathcal{G}$ for each $|\epsilon|=1$ so that
\[P(|z|)\leq |h'(z)+\epsilon g'(z)|\leq Q(|z|), \quad z \in \mathbb{D}.\]
In particular, this shows that
\[P(|z|)\leq |h'(z)|-|g'(z)|\quad \mbox{and}\quad |h'(z)|+|g'(z)|\leq Q(|z|), \quad z \in \mathbb{D}.\]
If $\Gamma$ is the radial segment from $0$ to $z$, then
\[|f(z)|=\left|\int_{\Gamma} \frac{\partial f}{\partial\zeta}\,d\zeta+\frac{\partial f}{\partial\overline{\zeta}}\,d\overline{\zeta}\right|\leq \int_{\Gamma} (|h'(\zeta)|+|g'(\zeta)|)|d\zeta|\leq \int_{0}^{|z|} Q(\rho)\,d\rho.\]
Next, let $\Gamma$ be the pre-image under $f$ of the radial segment from $0$ to $f(z)$. Then
\[|f(z)|=\int_{\Gamma}\left| \frac{\partial f}{\partial\zeta}\,d\zeta+\frac{\partial f}{\partial\overline{\zeta}}\,d\overline{\zeta}\right|\geq \int_{\Gamma} (|h'(\zeta)|-|g'(\zeta)|)|d\zeta|\geq \int_{0}^{|z|} P(\rho)\,d\rho.\qedhere\]
\end{proof}

\begin{theorem}\label{th2.7}
Suppose that $\mathcal{G}\subset \mathcal{S}$ and $\mathcal{G}\triangleright \mathcal{G}_{H}^{0}$. Then $\mathcal{G}$ is compact if and only if $\mathcal{G}_{H}^{0}$ is compact.
\end{theorem}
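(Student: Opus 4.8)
The plan is to interpret compactness in the usual sense for the topology of locally uniform convergence, namely as closedness together with normality (local uniform boundedness), and to prove the two implications separately. Throughout I will exploit the defining relation from Definition \ref{def}, that $f=h+\bar g\in\mathcal{G}_{H}^{0}$ precisely when $h+\epsilon g\in\mathcal{G}$ for every $|\epsilon|=1$, together with the elementary fact noted in the introduction that $\mathcal{G}\subset\mathcal{G}_{H}^{0}$.

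For the implication that $\mathcal{G}$ compact forces $\mathcal{G}_{H}^{0}$ compact, I would first establish normality. Given a compact $K\subset\mathbb{D}$, compactness of $\mathcal{G}$ yields a bound $M_K$ with $|\phi(z)|\le M_K$ for all $\phi\in\mathcal{G}$ and $z\in K$. For any $f=h+\bar g\in\mathcal{G}_{H}^{0}$ the two functions $h+g$ and $h-g$ lie in $\mathcal{G}$, so both $|h+g|$ and $|h-g|$ are bounded by $M_K$ on $K$; writing $h=\tfrac12((h+g)+(h-g))$ and $g=\tfrac12((h+g)-(h-g))$ gives $|h|,|g|\le M_K$ and hence $|f|\le 2M_K$ on $K$, uniformly over $\mathcal{G}_{H}^{0}$. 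This is the local uniform boundedness needed for normality. For closedness, suppose $f_k=h_k+\bar g_k\in\mathcal{G}_{H}^{0}$ converge locally uniformly to a harmonic $f$. Since locally uniform convergence of harmonic functions carries over to their partial derivatives, $(f_k)_z=h_k'$ and $(f_k)_{\bar z}=\overline{g_k'}$ converge locally uniformly, forcing $h_k'\to f_z$ and $g_k'\to\overline{f_{\bar z}}$ with analytic limits; integrating from the origin under the normalization $h_k(0)=g_k(0)=0$ gives $h_k\to h$ and $g_k\to g$ locally uniformly with $f=h+\bar g$. Then for each fixed $|\epsilon|=1$ we have $h_k+\epsilon g_k\to h+\epsilon g$ locally uniformly; as $h_k+\epsilon g_k\in\mathcal{G}$ and $\mathcal{G}$ is closed, $h+\epsilon g\in\mathcal{G}$, whence $f\in\mathcal{G}_{H}^{0}$.

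For the converse, assuming $\mathcal{G}_{H}^{0}$ compact, normality of $\mathcal{G}$ is immediate from $\mathcal{G}\subset\mathcal{G}_{H}^{0}$, since a subfamily of a locally uniformly bounded family is locally uniformly bounded. For closedness, let $f_k\in\mathcal{G}$ converge locally uniformly to $f$. Because $\mathcal{G}\subset\mathcal{G}_{H}^{0}$ and $\mathcal{G}_{H}^{0}$ is closed, $f\in\mathcal{G}_{H}^{0}$; moreover $f$ is analytic as a locally uniform limit of analytic functions, and by Hurwitz's theorem together with the normalization $f'(0)=1$ (so $f$ is nonconstant) it is univalent, i.e.\ $f\in\mathcal{S}$. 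Thus $f\in\mathcal{S}\cap\mathcal{G}_{H}^{0}$, and Theorem \ref{th2.2}$(ii)$ gives $f\in\mathcal{G}$, proving $\mathcal{G}$ closed.

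I expect the main obstacle to lie in the closedness step of the forward direction, specifically in justifying that the limit harmonic function $f$ genuinely decomposes as $h+\bar g$ with $h+\epsilon g\in\mathcal{G}$ for \emph{every} $|\epsilon|=1$. The subtlety is that one is handed only $f_k\to f$, not the convergence of the analytic and co-analytic parts; the resolution is to pass through the partials $f_z,f_{\bar z}$, recover $h_k\to h$ and $g_k\to g$, and only then test against each unimodular $\epsilon$. The remaining arguments are routine applications of Montel-type normality, Hurwitz's theorem, and the already-proved Theorem \ref{th2.2}$(ii)$.
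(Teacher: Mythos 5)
Your proposal is correct and follows essentially the same route as the paper: both directions hinge on recovering $h_k\to h$ and $g_k\to g$ from $f_k\to f$ so as to test $h+\epsilon g$ against the closedness of $\mathcal{G}$, and the converse uses $\mathcal{G}\subset\mathcal{G}_{H}^{0}$ together with Theorem \ref{th2.2}$(ii)$ exactly as the paper does. The only difference is that you verify normality explicitly (via the bound on $h\pm g\in\mathcal{G}$ and the passage through the partial derivatives), details the paper leaves implicit since $\mathcal{G}\subset\mathcal{S}$ and $\mathcal{G}_{H}^{0}\subset\mathcal{S}_{H}^{0}$ are already locally uniformly bounded.
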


\begin{proof}
For necessary part, suppose that $f_n=h_n+\overline{g}_n \in \mathcal{G}_{H}^{0}$ for $n=1,2,\ldots$ and that $f_n\rightarrow f$ uniformly on compact subsets of $\mathbb{D}$. Then $f$ is harmonic and so $f=h+\bar{g}$. It is easy to see that $h_n\rightarrow h$ and $g_n\rightarrow g$ locally uniformly so that $h_n+\epsilon g_n \rightarrow h+\epsilon g$ for each $|\epsilon|=1$. Since $h_n+\epsilon g_n \in \mathcal{G}$ it follows that $h+\epsilon g \in \mathcal{G}$ for each $|\epsilon|=1$ using the compactness of $\mathcal{G}$. Thus $f=h+\bar{g} \in \mathcal{G}_{H}^{0}$.

For sufficient part, let $f_n \in \mathcal{G}$ for $n=1,2,\ldots$ such that $f_n \rightarrow f$ uniformly on compact subsets of $\mathbb{D}$. Then $f$ is univalent. Since  $\mathcal{G} \subset \mathcal{G}_{H}^{0}$ and $\mathcal{G}_{H}^{0}$ is compact, $f \in \mathcal{G}_{H}^{0}$. By Theorem \ref{th2.2}$(ii)$, $f \in \mathcal{G}$. This completes the proof.
\end{proof}

The next theorem investigates the relation between the radius of starlikeness, convexity and close-to-convexity of the classes $\mathcal{G}$ and $\mathcal{G}_{H}^{0}$.

\begin{theorem}\label{th2.8}
Suppose that $\mathcal{G}\subset \mathcal{S}$ and $\mathcal{G}\triangleright \mathcal{G}_{H}^{0}$. Then the classes $\mathcal{G}$ and $\mathcal{G}_{H}^{0}$ have the same radius of starlikeness, convexity and close-to-convexity.
\end{theorem}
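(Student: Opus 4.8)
The plan is to establish each of the three equalities by the same two-sided scheme, so I carry out the details for the radius of starlikeness and only indicate the (verbatim) modifications for convexity and close-to-convexity. Let $r^{*}(\mathcal{G})$ denote the radius of starlikeness of $\mathcal{G}$, that is, the largest $r$ for which every $f\in\mathcal{G}$ is starlike in $\{z:|z|<r\}$, and let $r^{*}(\mathcal{G}_{H}^{0})$ denote the radius of full starlikeness of $\mathcal{G}_{H}^{0}$. The organising principle is Lemma \ref{lem}: since every $f=h+\bar{g}\in\mathcal{G}_{H}^{0}$ is stable univalent by Theorem \ref{th2.2}$(i)$, its starlikeness on a subdisk should be read off from the family of analytic functions $h+\lambda g$, $|\lambda|=1$, all of which lie in $\mathcal{G}$ by Definition \ref{def}.

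The first step is a localized form of Lemma \ref{lem}: a sense-preserving harmonic mapping $f=h+\bar{g}$ is fully starlike in $\{z:|z|<r\}$ whenever every analytic function $h+\lambda g$, $|\lambda|=1$, is starlike in $\{z:|z|<r\}$. I would obtain this by applying Lemma \ref{lem} to the dilations $f_{r}(z)=f(rz)=h(rz)+\overline{g(rz)}$ and $(h+\lambda g)(rz)=h(rz)+\lambda g(rz)$ on the full disk $\mathbb{D}$: the substitution $z\mapsto rz$ is a bijection of $\mathbb{D}$ onto $\{z:|z|<r\}$ carrying the circle $|z|=\rho$ to $|z|=r\rho$ and leaving the image curves unchanged, so starlikeness of $h+\lambda g$ on $\{z:|z|<r\}$ is equivalent to starlikeness of $h(rz)+\lambda g(rz)$ on $\mathbb{D}$, and full starlikeness of $f$ on $\{z:|z|<r\}$ is equivalent to full starlikeness of $f_{r}$ on $\mathbb{D}$. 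Lemma \ref{lem}, together with the fact quoted before Corollary \ref{cor2.4} that stable starlike mappings are fully starlike, then yields the assertion for $f_{r}$, and undoing the dilation gives it for $f$.

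With this in hand the two inequalities are immediate. For $r^{*}(\mathcal{G}_{H}^{0})\ge r^{*}(\mathcal{G})$, put $r=r^{*}(\mathcal{G})$ and take any $f=h+\bar{g}\in\mathcal{G}_{H}^{0}$; by Definition \ref{def} each $h+\lambda g$ belongs to $\mathcal{G}$ and is therefore starlike in $\{z:|z|<r\}$, so the localized lemma makes $f$ fully starlike there, and $f$ was arbitrary. For the reverse inequality I use $\mathcal{G}\subset\mathcal{G}_{H}^{0}$: a member $p\in\mathcal{G}$, viewed as the harmonic mapping $p=p+\bar{0}$, has $h+\lambda g\equiv p$, so its radius of full starlikeness coincides with the ordinary radius of starlikeness of $p$ (for analytic functions starlikeness is hereditary). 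Hence the infimum of the radius over the larger family $\mathcal{G}_{H}^{0}$ is no larger than the infimum over $\mathcal{G}$, giving $r^{*}(\mathcal{G}_{H}^{0})\le r^{*}(\mathcal{G})$. Combining the two inequalities gives equality, and repeating the argument with ``starlike'' replaced by ``convex'' and then ``close-to-convex'', and with the corresponding clause of Lemma \ref{lem}, disposes of the remaining two radii.

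I expect the only real obstacle to be the first step, namely checking that the stable-mappings characterization of Lemma \ref{lem}, stated on all of $\mathbb{D}$, transfers cleanly to the subdisk $\{z:|z|<r\}$; in particular one must confirm that the dilation preserves the sense-preserving hypothesis and that full starlikeness of $f$ on $\{z:|z|<r\}$ is genuinely equivalent to stable starlikeness of $f_{r}$ on $\mathbb{D}$. Once this localization is secured, everything else is the short sandwich argument above, and the close-to-convex case is handled identically provided one interprets the harmonic radius of close-to-convexity through the same associated analytic family $h+\lambda g$.
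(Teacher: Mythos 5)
Your proposal is correct and follows essentially the same route as the paper: one inequality comes from $\mathcal{G}\subset\mathcal{G}_{H}^{0}$, and the other from noting that each $h+\epsilon g$ lies in $\mathcal{G}$, hence is starlike (resp.\ convex, close-to-convex) in $|z|<r_{0}$, and then invoking Lemma \ref{lem}. The only difference is that you explicitly justify, via the dilation $z\mapsto rz$, how Lemma \ref{lem} transfers from $\mathbb{D}$ to a subdisk --- a step the paper's proof leaves implicit.
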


\begin{proof}
Since $\mathcal{G} \subset \mathcal{G}_{H}^{0}$ it suffices to show that if $r_{0}$ is the radius of starlikeness (resp. convexity and close-to-convexity) of $\mathcal{G}$ then $f$ is starlike (resp. convex and close-to-convex) in $|z|<r_{0}$ for each $f \in \mathcal{G}_{H}^{0}$. To see this, suppose that $f=h+\bar{g} \in \mathcal{G}_{H}^{0}$. Then the analytic functions $h+\epsilon g$ belong to the class $\mathcal{G}$. Consequently the functions $h+\epsilon g$ are starlike (resp. convex and close-to-convex) in $|z|<r_{0}$. In view of Lemma \ref{lem}, it follows that $f$ is starlike (resp. convex and close-to-convex) in $|z|<r_{0}$.
\end{proof}

For analytic functions
\begin{equation}\label{eq2.1}
f(z)=z+\sum_{n=2}^{\infty}a_{n}z^{n}\quad \mbox{and}\quad F(z)=z+\sum_{n=2}^{\infty}A_{n}z^{n}
\end{equation}
belonging to $\mathcal{A}$, their convolution (or Hadamard product) is defined as
\[(f*F)(z)=z+\sum_{n=2}^{\infty}a_{n}A_{n}z^{n}, \quad z \in \mathbb{D}.\]
In the harmonic case, with $f=h+\bar{g}$ and $F=H+\bar{G}$ belonging to $\mathcal{H}$, their harmonic convolution is defined as
\[f*F=h*H+\overline{g*G}.\]
Harmonic convolutions was investigated in \cite{cluniesheilsmall,dorff1,dorff2,goodloe,ruscheweyh}.

Suppose that $\mathcal{I}$ and $\mathcal{J}$ are subclasses of $\mathcal{H}$. We say that a class $\mathcal{I}$ is closed under convolution if $\mathcal{I}*\mathcal{I}\subset \mathcal{I}$, that is, if $f$, $g \in \mathcal{I}$ then $f*g \in \mathcal{I}$. Similarly, the class $\mathcal{I}$ is closed under convolution with members of $\mathcal{J}$ if $\mathcal{I}*\mathcal{J}\subset \mathcal{I}$. The next theorem discusses the convolution properties of the class $\mathcal{G}_{H}^{0}$.

\begin{theorem}\label{th2.9}
Suppose that $\mathcal{G}\subset \mathcal{S}$ is closed under convolution and $\mathcal{G}\triangleright \mathcal{G}_{H}^{0}$. Then
\begin{itemize}
  \item [$(i)$] The convolution of each member of $\mathcal{G}_{H}^{0}$ with itself is again a member of $\mathcal{G}_{H}^{0}$;
  \item [$(ii)$] If $(f+g)/2 \in \mathcal{G}$ for all $f$, $g \in \mathcal{G}$, then $\mathcal{G}_{H}^{0}$ is closed under convolution.
\end{itemize}
\end{theorem}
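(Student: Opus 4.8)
The plan is to reduce each assertion, via Definition \ref{def}, to an algebraic identity in the convolution algebra that eliminates the unwanted cross terms, and then to invoke the closure hypotheses on $\mathcal{G}$. Throughout write a generic member of $\mathcal{G}_{H}^{0}$ as $f = h + \bar{g}$, so that $h + \epsilon g \in \mathcal{G}$ for every $|\epsilon| = 1$, and recall that harmonic convolution is given by $(h + \bar{g}) * (H + \bar{G}) = (h * H) + \overline{g * G}$ and that analytic convolution is commutative and bilinear.

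For part $(i)$ I must show that $f * f = (h * h) + \overline{g * g}$ lies in $\mathcal{G}_{H}^{0}$, which by Definition \ref{def} means $(h * h) + \epsilon (g * g) \in \mathcal{G}$ for every $|\epsilon| = 1$. The decisive observation is that, for any $|\delta| = 1$, commutativity forces the linear terms to cancel:
\[
(h + \delta g) * (h - \delta g) = (h * h) - \delta^{2} (g * g).
\]
Both factors on the left belong to $\mathcal{G}$ (since $|\delta| = |{-\delta}| = 1$), so by closure under convolution the right-hand side lies in $\mathcal{G}$. Given $\epsilon$ with $|\epsilon| = 1$, choosing $\delta$ with $\delta^{2} = -\epsilon$ turns the right-hand side into $(h * h) + \epsilon (g * g)$, which establishes the claim.

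For part $(ii)$ take $f_{1} = h_{1} + \bar{g}_{1}$ and $f_{2} = h_{2} + \bar{g}_{2}$ in $\mathcal{G}_{H}^{0}$; I must check that $(h_{1} * h_{2}) + \epsilon (g_{1} * g_{2}) \in \mathcal{G}$ for each $|\epsilon| = 1$. Expanding, for $|\delta| = |\mu| = 1$,
\[
(h_{1} + \delta g_{1}) * (h_{2} + \mu g_{2}) = (h_{1} * h_{2}) + \mu (h_{1} * g_{2}) + \delta (g_{1} * h_{2}) + \delta\mu (g_{1} * g_{2}),
\]
and each such product lies in $\mathcal{G}$ by convolution closure. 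Here the cross terms $h_{1} * g_{2}$ and $g_{1} * h_{2}$ need not coincide, so I would invoke the midpoint hypothesis: averaging this product with the same expression evaluated at $(-\delta, -\mu)$ cancels both linear terms and yields $(h_{1} * h_{2}) + \delta\mu (g_{1} * g_{2}) \in \mathcal{G}$. For a prescribed unimodular $\epsilon$ the choice $\delta\mu = \epsilon$ then gives the desired membership, whence $f_{1} * f_{2} = (h_{1} * h_{2}) + \overline{g_{1} * g_{2}} \in \mathcal{G}_{H}^{0}$.

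There is no serious obstacle; the whole content is in selecting the sign-flip combinations that annihilate the cross terms. The only points needing care are that in part $(i)$ the cancellation is automatic from commutativity whereas in part $(ii)$ the midpoint closure is genuinely required to average away the unequal cross terms, and that as $\delta, \mu$ range over the unit circle the quantities $-\delta^{2}$ and $\delta\mu$ sweep out the entire unit circle, so that membership is secured for all $|\epsilon| = 1$ rather than merely a proper subset.
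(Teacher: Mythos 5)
Your proposal is correct and follows essentially the same route as the paper: for $(i)$ you factor $(h*h)+\epsilon(g*g)$ as a convolution of two rotations $h\pm\delta g$ with $\delta^2=-\epsilon$ (the paper writes $\delta=i\nu$ with $\nu^2=\epsilon$), and for $(ii)$ you average $(h_1+\delta g_1)*(h_2+\mu g_2)$ with its sign-flipped counterpart to kill the cross terms, which is exactly the paper's $\tfrac12(F_1+F_2)$ with $\delta=1$, $\mu=\epsilon$. No gaps.
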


\begin{proof}
Let $f=h+\bar{g} \in \mathcal{G}_{H}^{0}$. To prove $(i)$, it suffices to show that $(h*h)+\epsilon (g*g) \in \mathcal{G}$ for each $|\epsilon|=1$. For $|\epsilon|=1$, note that
\[(h*h)+\epsilon (g*g)=(h+i \nu g)*(h-i \nu g)\]
where $\pm \nu$  are square roots of $\epsilon$. Since $\mathcal{G}$ is closed under convolution, it follows that $(h*h)+\epsilon (g*g) \in \mathcal{G}$ so that $f*f \in \mathcal{G}_{H}^{0}$. This proves $(i)$.

Regarding the proof of $(ii)$, let $f_i=h_i+\bar{g_i} \in \mathcal{G}_{H}^{0}$ ($i=1,2$). Considering the analytic functions
\begin{align*}
F_1&=(h_1-g_1)*(h_2-\epsilon g_2)\\
F_2&=(h_1+g_1)*(h_2+\epsilon g_2)
\end{align*}
for $|\epsilon|=1$, we see that
\[\frac{1}{2}(F_1+F_2)=(h_1*h_2)+\epsilon(g_1*g_2).\]
Since $F_1$, $F_2 \in \mathcal{G}$ and using the hypothesis, it is easy to deduce that $f_1*f_2 \in \mathcal{G}_{H}^{0}$.
\end{proof}

Theorem \ref{th2.9} immediately gives
\begin{corollary}\label{cor2.10}
Suppose that $\mathcal{G}\subset \mathcal{S}$ is a convex set and is closed under convolution. If $\mathcal{G}\triangleright \mathcal{G}_{H}^{0}$, then $\mathcal{G}_{H}^{0}$ is closed under convolution.
\end{corollary}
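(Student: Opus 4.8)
The plan is to obtain this corollary as an immediate specialization of Theorem \ref{th2.9}$(ii)$. That theorem already does all the substantive work: it shows that $\mathcal{G}_{H}^{0}$ is closed under convolution whenever $\mathcal{G}$ is closed under convolution and satisfies the midpoint condition $(f+g)/2 \in \mathcal{G}$ for all $f,g \in \mathcal{G}$. Since closure under convolution is assumed outright in the hypothesis, the only remaining task is to verify that the convexity of $\mathcal{G}$ yields this midpoint condition.

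First I would observe that for any $f,g \in \mathcal{G}$ the function $(f+g)/2$ is precisely the convex combination $\tfrac{1}{2}f + \tfrac{1}{2}g$, with weights $\tfrac{1}{2},\tfrac{1}{2}$ that are nonnegative and sum to $1$. Here ``convex set'' is understood in the usual linear-space sense, where $\mathcal{G}$ is regarded as a subset of the vector space of analytic functions on $\mathbb{D}$ under pointwise addition and scalar multiplication. By the defining property of a convex set, this convex combination again belongs to $\mathcal{G}$, so $(f+g)/2 \in \mathcal{G}$ for all $f,g \in \mathcal{G}$.

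Having checked the midpoint hypothesis, I would then simply apply Theorem \ref{th2.9}$(ii)$: since $\mathcal{G}$ is closed under convolution, satisfies $(f+g)/2 \in \mathcal{G}$, and since $\mathcal{G} \triangleright \mathcal{G}_{H}^{0}$, the theorem directly gives that $\mathcal{G}_{H}^{0}$ is closed under convolution, completing the argument.

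I do not anticipate a genuine obstacle here, as the corollary is essentially a restatement of Theorem \ref{th2.9}$(ii)$ with the abstract midpoint condition replaced by the cleaner and more familiar hypothesis of convexity. The only point warranting a moment's care is recognizing that the midpoint $(f+g)/2$ is the relevant convex combination realizing the hypothesis of the theorem; once this identification is made, the conclusion follows with no further computation.
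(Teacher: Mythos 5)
Your proposal is correct and matches the paper's argument exactly: the paper also derives Corollary \ref{cor2.10} as an immediate consequence of Theorem \ref{th2.9}$(ii)$, with the convexity of $\mathcal{G}$ supplying the midpoint condition $(f+g)/2 \in \mathcal{G}$. Nothing further is needed.
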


In \cite{goodloe}, Goodloe considered the Hadamard product $\tilde{*}$ of a harmonic function with an analytic function defined as follows:
\[f \tilde{*} \varphi=\varphi \tilde{*} f=h*\varphi+\overline{g*\varphi},\]
where $f=h+\bar{g}$ is harmonic and $\varphi$ is analytic in $\mathbb{D}$. The next theorem investigates the properties of the product $\tilde{*}$.

\begin{theorem}\label{th2.11}
Suppose that $\mathcal{G}\subset \mathcal{S}$ and $\mathcal{G}\triangleright \mathcal{G}_{H}^{0}$. Let $\mathcal{O}$ be a subfamily of $\mathcal{A}$ such that $\mathcal{G}$ is closed under convolution with members of $\mathcal{O}$. Then $\varphi \tilde{*} f \in \mathcal{G}_{H}^{0}$ for all $\varphi \in  \mathcal{O}$ and $f \in \mathcal{G}_{H}^{0}$.
\end{theorem}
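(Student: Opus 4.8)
The plan is to unwind the definition of the harmonic analogue and reduce the whole statement to the linearity of Hadamard convolution in its first argument. Write $f = h + \bar{g}$. By the definition of Goodloe's product we have $\varphi \tilde{*} f = (h * \varphi) + \overline{(g * \varphi)}$, so I set $H = h * \varphi$ and $G = g * \varphi$; the goal is to show $H + \bar{G} \in \mathcal{G}_{H}^{0}$. By Definition \ref{def} this amounts to verifying that the analytic function $H + \epsilon G$ belongs to $\mathcal{G}$ for every $|\epsilon| = 1$.

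The key step is the distributive identity
\[ H + \epsilon G = (h * \varphi) + \epsilon (g * \varphi) = (h + \epsilon g) * \varphi, \]
valid for each fixed $\epsilon$ because the convolution acts coefficient-wise and is therefore linear in its first slot. With this in hand the hypotheses close the argument immediately: since $f = h + \bar{g} \in \mathcal{G}_{H}^{0}$, the defining property yields $h + \epsilon g \in \mathcal{G}$ for each $|\epsilon| = 1$; and since $\varphi \in \mathcal{O}$ while $\mathcal{G}$ is closed under convolution with members of $\mathcal{O}$, it follows that $(h + \epsilon g) * \varphi \in \mathcal{G}$. Combining this with the identity above gives $H + \epsilon G \in \mathcal{G}$ for every $|\epsilon| = 1$, and hence $\varphi \tilde{*} f = H + \bar{G} \in \mathcal{G}_{H}^{0}$, as required.

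I do not anticipate any genuine obstacle here: the result is essentially a one-line reduction once the identity $(h + \epsilon g) * \varphi = h * \varphi + \epsilon (g * \varphi)$ is recorded. The only point deserving a moment's care is the normalization, namely that $h + \epsilon g$ is correctly normalized in $\mathcal{A}$ and that $g$ carries no linear term, so that $g * \varphi$ again begins at the quadratic coefficient and $H + \bar{G}$ stays within the ambient harmonic class. Verifying this keeps the argument honest but adds no real difficulty.
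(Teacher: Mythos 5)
Your proposal is correct and follows essentially the same route as the paper: both set $H=h*\varphi$, $G=g*\varphi$ and use the linearity identity $H+\epsilon G=(h+\epsilon g)*\varphi$ together with $\mathcal{G}*\mathcal{O}\subset\mathcal{G}$ to conclude. The remark about normalization is a harmless extra check that the paper leaves implicit.
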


\begin{proof}
Let $f=h+\bar{g} \in \mathcal{G}_{H}^{0}$ and $\varphi \in \mathcal{O}$. Then
\[\varphi\tilde{*}f=\varphi*h+\overline{\varphi*g}=H+\overline{G},\]
where $H=\varphi*h$ and $G=\varphi*g$ are analytic in $\mathbb{D}$. Setting $F=H+\epsilon G=\varphi*(h+\epsilon g)$ where $|\epsilon|=1$, we note that $F \in \mathcal{G}$ since $\mathcal{G}*\mathcal{O}\subset \mathcal{G}$. Thus $H+\overline{G} \in \mathcal{G}_{H}^{0}$ as desired.
\end{proof}

The next theorem indicates that the classes $\mathcal{G}$ and $\mathcal{G}_{H}^{0}$ have similar convex combination properties.
\begin{theorem}\label{th2.12}
Suppose that $\mathcal{G}\subset \mathcal{S}$ and $\mathcal{G}\triangleright \mathcal{G}_{H}^{0}$. Then $\mathcal{G}$ is closed under convex combinations if and only if $\mathcal{G}_{H}^{0}$ is closed under convex combinations.
\end{theorem}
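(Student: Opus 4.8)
The plan is to prove both implications by exploiting the fact that forming a convex combination commutes with the operation $(h,g)\mapsto h+\epsilon g$ that defines the harmonic analogue. Throughout it suffices to treat a convex combination of two functions, since the general finite case then follows by induction on the number of summands.

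First I would establish sufficiency: assuming $\mathcal{G}$ is closed under convex combinations, I take $f_1=h_1+\bar{g}_1$ and $f_2=h_2+\bar{g}_2$ in $\mathcal{G}_H^0$ and a parameter $t\in[0,1]$, and write the convex combination as $F=H+\bar{G}$ with $H=th_1+(1-t)h_2$ and $G=tg_1+(1-t)g_2$. The key computation is the identity
\[
H+\epsilon G=t\,(h_1+\epsilon g_1)+(1-t)\,(h_2+\epsilon g_2),\qquad |\epsilon|=1.
\]
Since $f_1,f_2\in\mathcal{G}_H^0$, both $h_1+\epsilon g_1$ and $h_2+\epsilon g_2$ lie in $\mathcal{G}$ for every $|\epsilon|=1$; closure of $\mathcal{G}$ under convex combinations then forces $H+\epsilon G\in\mathcal{G}$ for every $|\epsilon|=1$, which is exactly the statement that $F\in\mathcal{G}_H^0$.

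For necessity I would use the inclusion $\mathcal{G}\subset\mathcal{G}_H^0$ recorded after Definition \ref{def}. Given $f_1,f_2\in\mathcal{G}$ and $t\in[0,1]$, I regard $f_1,f_2$ as (analytic) members of $\mathcal{G}_H^0$; by hypothesis $F=tf_1+(1-t)f_2\in\mathcal{G}_H^0$. Because $F$ is analytic and normalized, Corollary \ref{cor2.3} shows its analytic part is univalent, so $F\in\mathcal{S}\cap\mathcal{G}_H^0$, and Theorem \ref{th2.2}$(ii)$ yields $F\in\mathcal{G}$. Alternatively one can argue directly: for analytic $F$ the harmonic decomposition has $g\equiv 0$, whence $F=h+\epsilon g\in\mathcal{G}$ straight from Definition \ref{def}.

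I do not expect a genuine obstacle here; the whole content is the linearity identity displayed above, which shows that convex combinations and the defining $\epsilon$-condition interact transparently. The only point requiring a little care is the necessity direction, where one must pass from membership of an analytic function in $\mathcal{G}_H^0$ back to membership in $\mathcal{G}$, which is precisely what Theorem \ref{th2.2}$(ii)$ supplies.
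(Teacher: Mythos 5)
Your argument is correct and is essentially the paper's own proof: the same linearity identity $H+\epsilon G=t\,(h_1+\epsilon g_1)+(1-t)\,(h_2+\epsilon g_2)$ drives the forward direction, and the converse uses $\mathcal{G}\subset\mathcal{G}_{H}^{0}$ together with Theorem \ref{th2.2}$(ii)$ exactly as in the paper. The only cosmetic difference is that the paper works directly with countable convex combinations $\sum_{n}t_nf_n$, which your displayed identity handles verbatim, whereas your reduction to two summands plus induction strictly covers only finite combinations.
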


\begin{proof}
Firstly we will prove the necessary part. For $n=1,2,\ldots$, suppose that $f_n \in \mathcal{G}_{H}^{0}$ where $f_n=h_n+\overline{g}_n$. For $\sum_{n=1}^{\infty}t_n=1$, $0\leq t_n \leq 1$, the convex combination of $f_n$'s may be written as
\[f(z)=\sum_{n=1}^{\infty}t_n f_n(z)=h(z)+\overline{g(z)},\]
where
\[h(z)=\sum_{n=1}^{\infty}t_n h_n(z)\quad \mbox{and} \quad g(z)=\sum_{n=1}^{\infty}t_n g_n(z).\]
are analytic in $\mathbb{D}$ with $h(0)=g(0)=h'(0)-1=g'(0)=0$. For $|\epsilon|=1$, we have
\[(h+\epsilon g)(z)=\sum_{n=1}^{\infty} t_n (h_n+\epsilon g_n )(z),\quad z \in \mathbb{D}.\]
Since the class $\mathcal{G}$ is closed under convex combination and $h_n+\epsilon g_n \in \mathcal{G}$ for $n=1,2,\ldots$, it follows that $h+\epsilon g \in \mathcal{G}$. Thus $f=h+\bar{g} \in \mathcal{G}_{H}^{0}$. This proves the necessary part.

The sufficient part follows by using the fact that $\mathcal{G} \subset \mathcal{G}_{H}^{0}$ and applying Theorem \ref{th2.2}$(ii)$.
\end{proof}

Theorem \ref{th2.12} immediately yields
\begin{corollary}\label{cor2.13}
Suppose that $\mathcal{G}\subset \mathcal{S}$ and $\mathcal{G}\triangleright \mathcal{G}_{H}^{0}$. Then $\mathcal{G}$ is a convex set if and only if $\mathcal{G}_{H}^{0}$ is a convex set.
\end{corollary}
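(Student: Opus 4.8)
The plan is to observe that, for a family of functions, being a \emph{convex set} and being \emph{closed under convex combinations} are simply two names for the same property, so that the corollary is nothing more than a reformulation of Theorem \ref{th2.12}.

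To make this precise I would first recall that a family $\mathcal{F}$ of functions, regarded as a subset of the (real-)linear space of functions on $\mathbb{D}$, is a convex set precisely when $t f_1 + (1-t) f_2 \in \mathcal{F}$ for every $f_1, f_2 \in \mathcal{F}$ and every $t \in [0,1]$. This is exactly the assertion that $\mathcal{F}$ is closed under the two-term convex combination $t_1 f_1 + t_2 f_2$ with $t_1 + t_2 = 1$ and $t_1, t_2 \ge 0$, and a routine induction shows it is equivalent to closure under arbitrary finite convex combinations. Hence ``$\mathcal{F}$ is a convex set'' and ``$\mathcal{F}$ is closed under convex combinations'' describe the very same condition, both for $\mathcal{G}$ and for $\mathcal{G}_{H}^{0}$.

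Granting this identification, the proof is immediate: Theorem \ref{th2.12} states that $\mathcal{G}$ is closed under convex combinations if and only if $\mathcal{G}_{H}^{0}$ is closed under convex combinations, and by the preceding remark this says precisely that $\mathcal{G}$ is a convex set if and only if $\mathcal{G}_{H}^{0}$ is a convex set.

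I expect no genuine obstacle here. The only item deserving a glance is the distinction between the two-term, finite, and (as written in Theorem \ref{th2.12}) countable convex combinations; but since the linear identity $(h + \epsilon g)(z) = \sum_{n} t_n (h_n + \epsilon g_n)(z)$ that drives the proof of Theorem \ref{th2.12} is valid regardless of whether the index set is finite or countable, this bookkeeping costs nothing, and the equivalence transfers directly to the notion of a convex set.
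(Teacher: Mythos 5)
Your proposal is correct and matches the paper's treatment: the paper offers no separate argument, stating only that Theorem \ref{th2.12} ``immediately yields'' the corollary, which is precisely your identification of ``convex set'' with closure under (two-term) convex combinations together with the observation that the proof of Theorem \ref{th2.12} is insensitive to whether the index set is finite or countable.
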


\begin{remark}\label{rem2.14}
The harmonic Koebe function
\begin{equation}\label{eq2.2}
K(z)=H(z)+\overline{G(z)}, \quad H(z):=\frac{z-\frac{1}{2}z^2+\frac{1}{6}z^3}{(1-z)^3},\quad G(z):=\frac{\frac{1}{2}z^2+\frac{1}{6}z^3}{(1-z)^3}
\end{equation}
shows that the classes $\mathcal{S}_{H}^{0}$, $\mathcal{S}_{H}^{*0}$ and $\mathcal{C}_{H}^{0}$ are not harmonic analogues of any subfamily of $\mathcal{S}$ since
\[H(z)+G(z)=\frac{z+\frac{1}{3}z^3}{(1-z)^3},\quad z \in \mathbb{D},\]
and $(H+G)(i/\sqrt{3})=(H+G)(-i/\sqrt{3})$ which imply that $H+G$ is not univalent in $\mathbb{D}$. Similarly, $\mathcal{K}_{H}^{0}$ is not a harmonic analogue of any subfamily  $\mathcal{G} \subset \mathcal{S}$. For if $\mathcal{G} \triangleright  \mathcal{K}_{H}^{0}$ then $\mathcal{G}\subset \mathcal{K}$. The harmonic half-plane mapping
\begin{equation}\label{eq2.3}
L(z)=M(z)+\overline{N(z)},\quad  M(z):=\frac{z-\frac{1}{2}z^{2}}{(1-z)^2},\quad  N(z):=\frac{-\frac{1}{2}z^2}{(1-z)^2}
\end{equation}
belongs to $\mathcal{K}_{H}^{0}$ and $M(z)-N(z)=z/(1-z)^2 \not\in \mathcal{K}$.
\end{remark}

Remark \ref{rem2.14} suggests that given a subfamily $\mathcal{G}_{H}^{0} \subset \mathcal{S}_{H}^{0}$, it is possible that $\mathcal{G}_{H}^{0}$ is not a harmonic analogue of any subclass of $\mathcal{S}$. This motivates us to determine a necessary and sufficient condition for a subfamily $\mathcal{G}_{H}^{0}\subset \mathcal{S}_{H}^{0}$ to be a harmonic analogue of some family $\mathcal{G}\subset \mathcal{S}$. This is contained in the following theorem.

\begin{theorem}\label{th2.15}
A subfamily $\mathcal{G}_{H}^{0} \subset \mathcal{S}_{H}^{0}$ is a harmonic analogue of some family $\mathcal{G}\subset \mathcal{S}$ if and only if $\mathcal{G}_{H}^{0} \subset \mathcal{SS}_{H}^{0}$.
\end{theorem}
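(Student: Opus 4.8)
The statement splits into two implications, and essentially all the content is in the converse. The forward implication is immediate: if $\mathcal{G}_H^0$ is the harmonic analogue of some $\mathcal{G}\subset\mathcal{S}$, then $\mathcal{G}\triangleright\mathcal{G}_H^0$ and Theorem \ref{th2.2}$(i)$ gives $\mathcal{G}_H^0\subset\mathcal{SS}_H^0$ with no further work. So the plan is to take the hypothesis $\mathcal{G}_H^0\subset\mathcal{SS}_H^0$ and manufacture an explicit family $\mathcal{G}\subset\mathcal{S}$ whose harmonic analogue is exactly $\mathcal{G}_H^0$.

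For the converse I would introduce the natural ``slicing'' family
\[
\mathcal{G}=\{\,h+\epsilon g : f=h+\bar g\in\mathcal{G}_H^0,\ |\epsilon|=1\,\}.
\]
The first routine check is $\mathcal{G}\subset\mathcal{S}$: each $f=h+\bar g\in\mathcal{G}_H^0$ is stable univalent by hypothesis, so Lemma \ref{lem} guarantees that every $h+\epsilon g$ with $|\epsilon|=1$ is univalent, and the normalization of $\mathcal{S}_H^0$ makes each such slice normalized, hence a member of $\mathcal{S}$. Writing $\widetilde{\mathcal{G}}_H^0$ for the harmonic analogue of $\mathcal{G}$, the target is $\widetilde{\mathcal{G}}_H^0=\mathcal{G}_H^0$, and one inclusion is the easy half: if $f=h+\bar g\in\mathcal{G}_H^0$ then $h+\epsilon g\in\mathcal{G}$ for every $|\epsilon|=1$ by construction, which is precisely the membership condition of Definition \ref{def}, so $\mathcal{G}_H^0\subset\widetilde{\mathcal{G}}_H^0$.

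The reverse inclusion $\widetilde{\mathcal{G}}_H^0\subset\mathcal{G}_H^0$ is where the real difficulty lies, and I expect it to be the main obstacle. One starts from $f=h+\bar g$ all of whose slices $h+\epsilon g$ lie in $\mathcal{G}$ and must deduce $f\in\mathcal{G}_H^0$. The danger is that $\mathcal{G}$ is built as a union of entire circles $\{h+\epsilon g:|\epsilon|=1\}$ and is therefore blind to the rotation parameter, so a priori the analogue could pick up spurious rotated mappings $h+\lambda\bar g$. The mechanism to control this is exactly the closure recorded in Theorem \ref{th2.2}$(iii)$: a full circle of slices corresponds to a full circle of harmonic rotations $\{h+\lambda\bar g:|\lambda|=1\}$, so I would argue that the condition ``all slices of $f$ belong to $\mathcal{G}$'' forces $f$ onto a rotation-circle already represented among the members of $\mathcal{G}_H^0$, and then appeal to that rotational closure to return to $\mathcal{G}_H^0$ itself. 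The delicate point is to match each slice of $f$ to a genuine element of $\mathcal{G}_H^0$ rather than merely to its defining data; once this is handled, the two inclusions combine to give $\mathcal{G}\triangleright\mathcal{G}_H^0$ and complete the proof.
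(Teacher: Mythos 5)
Your construction of $\mathcal{G}=\{h+\epsilon g: h+\bar g\in\mathcal{G}_{H}^{0},\ |\epsilon|=1\}$ is exactly the one the paper uses, and the parts you actually verify match the paper's argument: the forward implication via Theorem \ref{th2.2}$(i)$, the inclusion $\mathcal{G}\subset\mathcal{S}$ via Lemma \ref{lem} and the normalization, and the easy inclusion of $\mathcal{G}_{H}^{0}$ into the harmonic analogue $\widetilde{\mathcal{G}}_{H}^{0}$ of $\mathcal{G}$. The genuine gap is the step you yourself single out and then leave open, namely $\widetilde{\mathcal{G}}_{H}^{0}\subset\mathcal{G}_{H}^{0}$. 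The repair mechanism you sketch --- appealing to the rotational closure of Theorem \ref{th2.2}$(iii)$ --- is circular: that closure property is proved only for classes already known to be harmonic analogues, which is precisely what is in question for $\mathcal{G}_{H}^{0}$ here.

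Worse, this inclusion is genuinely false for an arbitrary $\mathcal{G}_{H}^{0}\subset\mathcal{SS}_{H}^{0}$, so no argument can close the gap without strengthening the hypothesis. Take $\mathcal{G}_{H}^{0}=\{f_0\}$ with $f_0(z)=z+\tfrac{1}{2}\overline{z^2}$; this single mapping is stable univalent (each slice $z+\tfrac{1}{2}\epsilon z^2$ is univalent), so $\mathcal{G}_{H}^{0}\subset\mathcal{SS}_{H}^{0}$. Then $\mathcal{G}=\{z+\tfrac{1}{2}\epsilon z^2:|\epsilon|=1\}$, and $\widetilde{\mathcal{G}}_{H}^{0}$ contains every rotation $z+\tfrac{1}{2}\lambda\overline{z^2}$ with $|\lambda|=1$, as well as the analytic functions $z+\tfrac{1}{2}\epsilon z^2$ themselves (since $\mathcal{G}\subset\widetilde{\mathcal{G}}_{H}^{0}$ always), none of which equal $f_0$ except $f_0$ itself. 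Indeed, $\{f_0\}$ is not the harmonic analogue of \emph{any} family: by Theorem \ref{th2.2}$(iii)$ and the observation in the introduction that $\mathcal{G}\subset\mathcal{G}_{H}^{0}$ whenever $\mathcal{G}\triangleright\mathcal{G}_{H}^{0}$, a harmonic analogue must be closed under the rotations $h+\bar g\mapsto h+\lambda\bar g$ and must contain the analytic slices $h+\epsilon g$ of all its members, and neither property follows from $\mathcal{G}_{H}^{0}\subset\mathcal{SS}_{H}^{0}$. So the statement needs these as additional hypotheses. You should know that the paper's own proof does not resolve this either: it simply asserts that ``it is easily seen that $\mathcal{G}\triangleright\mathcal{G}_{H}^{0}$.'' Your instinct about where the real difficulty lies is exactly right; the difficulty is fatal to the theorem as stated, not merely technical.
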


\begin{proof}
The necessary part follows by Theorem \ref{th2.2}$(i)$. For the sufficient part, suppose that $\mathcal{G}_{H}^{0} \subset \mathcal{SS}_{H}^{0}$. Consider the set
\[\mathcal{G}=\{h+ \epsilon g: h+\bar{g} \in \mathcal{G}_{H}^{0}\mbox{ and } |\epsilon|=1\}.\]
Using Lemma \ref{lem}, it is easily seen that $\mathcal{G} \subset \mathcal{S}$ and $\mathcal{G}\triangleright \mathcal{G}_{H}^{0}$.
\end{proof}

Keeping in mind that $\mathcal{S}\triangleright \mathcal{SS}_{H}^{0}$, $\mathcal{S}^{*} \triangleright \mathcal{SS}_{H}^{*0}$, $\mathcal{K} \triangleright\mathcal{SK}_{H}^{0}$ and $\mathcal{C}\triangleright \mathcal{SC}_{H}^{0}$, we determine the coefficient estimates, growth results, convolution properties and sharp bound for radius of starlikeness, convexity and close-to-convexity for the classes  $\mathcal{SS}_{H}^{0}$, $\mathcal{SS}_{H}^{*0}$, $\mathcal{SK}_{H}^{0}$ and $\mathcal{SC}_{H}^{0}$, using the results proved in this section. Note that parts $(i)$ and $(ii)$ of  the following theorem have been independently proved in \cite[Section 7]{stable}.

\begin{theorem}\label{th2.16}
Let $f=h+\bar{g} \in \mathcal{S}_{H}^{0}$ where $h$ and $g$ are given by \eqref{eq1.1}.
\begin{itemize}
  \item [$(i)$] (Coefficient estimates) If $f\in \mathcal{SS}_{H}^{0}, \mathcal{SS}_{H}^{*0}$ or $\mathcal{SC}_{H}^{0}$, then the sharp inequality $||a_n|-|b_n||\leq n$ holds for $n=2,3,\ldots$. Equality occurs for the analytic Keebe function $k(z)=z/(1-z)^2$. In case, $f \in \mathcal{SK}_{H}^{0}$ then $||a_n|-|b_n||\leq 1$ for $n=2,3,\ldots$, with the equality occurring for the analytic half-plane mapping $l(z)=z/(1-z)$.
  \item [$(ii)$] (Growth estimates and covering theorem) If $f\in \mathcal{SS}_{H}^{0}, \mathcal{SS}_{H}^{*0}$ or $\mathcal{SC}_{H}^{0}$, then we have
  \[\frac{|z|}{(1+|z|)^2} \leq |f(z)|\leq \frac{|z|}{(1-|z|)^2},\quad z \in \mathbb{D}.\]
  In particular, the range $f(\mathbb{D})$ contains the disk $|w|<1/4$. These results are sharp for the  analytic Koebe function $k$. If $f \in \mathcal{SK}_{H}^{0}$, then
  \[\frac{|z|}{1+|z|} \leq |f(z)|\leq \frac{|z|}{1-|z|},\quad z \in \mathbb{D},\]
  and therefore the range $f(\mathbb{D})$ contains the disk $|w|<1/2$. The analytic half plane mapping $l$ shows that these results are best possible.
  \item [$(iii)$] (Compactness) The classes $\mathcal{SS}_{H}^{0}$, $\mathcal{SS}_{H}^{*0}$, $\mathcal{SK}_{H}^{0}$ and $\mathcal{SC}_{H}^{0}$ are compact with respect to the topology of locally uniform convergence.
  \item [$(iv)$] (Radii of starlikeness, convexity and close-to-convexity) Let $r_{S}(\mathcal{G}_{H}^{0})$, $r_{C}(\mathcal{G}_{H}^{0})$ and $r_{CC}(\mathcal{G}_{H}^{0})$ denote the radius of starlikeness, convexity and close-to-convexity respectively of a subclass $\mathcal{G}_{H}^{0}\subset \mathcal{S}_{H}^{0}$. Then
      \[r_{S}(\mathcal{SS}_{H}^{*0})=r_{S}(\mathcal{SK}_{H}^{0})=r_{C}(\mathcal{SK}_{H}^{0})=
      r_{CC}(\mathcal{SS}_{H}^{*0})=r_{CC}(\mathcal{SK}_{H}^{0})=r_{CC}(\mathcal{SC}_{H}^{0})=1;\]
      \[r_{C}(\mathcal{SS}_{H}^{0})=r_{C}(\mathcal{SS}_{H}^{*0})=r_{C}(\mathcal{SC}_{H}^{0})=2-\sqrt{3};\]
      \[r_{S}(\mathcal{SS}_{H}^{0})=\tanh (\pi/4),\quad \mbox{and}\quad r_{S}(\mathcal{SC}_{H}^{0})=4\sqrt{2}-5.\]
      For $r_{CC}(\mathcal{SS}_{H}^{0})$, refer to \cite{krzyz}.
  \item [$(v)$] (Convolution properties)
  \begin{itemize}
    \item [$(a)$] If $f \in \mathcal{SK}_{H}^{0}$, then $f*f \in \mathcal{SK}_{H}^{0}$.
    \item [$(b)$] If $\varphi \in \mathcal{K}$ and $f \in \mathcal{SS}_{H}^{*0}$ (resp.  $f \in \mathcal{SK}_{H}^{0}$ and $f \in \mathcal{SC}_{H}^{0}$),  then $f \tilde{*} \varphi \in \mathcal{SS}_{H}^{*0}$ (resp.  $f \tilde{*} \varphi \in \mathcal{SK}_{H}^{0}$ and $f \tilde{*} \varphi \in \mathcal{SC}_{H}^{0}$).
  \end{itemize}
\end{itemize}
\end{theorem}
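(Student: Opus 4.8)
The plan is to derive every assertion from the structural theorems of Section~\ref{sec2} by feeding in the corresponding classical facts about the analytic generators, since $\mathcal{S}\triangleright\mathcal{SS}_{H}^{0}$, $\mathcal{S}^{*}\triangleright\mathcal{SS}_{H}^{*0}$, $\mathcal{K}\triangleright\mathcal{SK}_{H}^{0}$ and $\mathcal{C}\triangleright\mathcal{SC}_{H}^{0}$. Thus each property of a harmonic class is transferred from the known analogous property of its analytic generator, and no genuinely new estimate is required.

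For part~$(i)$ I would apply Theorem~\ref{th2.5}$(a)$. The coefficient bound $|a_{n}|\le n$ holds for every function in $\mathcal{S}$, $\mathcal{S}^{*}$ and $\mathcal{C}$, so taking $p(n)=n$ yields $||a_{n}|-|b_{n}||\le n$ for the first three harmonic classes, while the bound $|a_{n}|\le 1$ for $\mathcal{K}$ gives $p(n)=1$ and hence $||a_{n}|-|b_{n}||\le 1$ for $\mathcal{SK}_{H}^{0}$. Sharpness is immediate from $\mathcal{G}\subset\mathcal{G}_{H}^{0}$: the analytic Koebe function $k$ lies in $\mathcal{S}$, $\mathcal{S}^{*}$ and $\mathcal{C}$, and the half-plane mapping $l$ lies in $\mathcal{K}$, both realizing the extremal coefficients with $g\equiv 0$ (consistent with Theorem~\ref{th2.5}$(b)$). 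For part~$(ii)$ I would insert the classical distortion estimates into Theorem~\ref{th2.6}: for $\mathcal{S}$, $\mathcal{S}^{*}$, $\mathcal{C}$ one has $(1-|z|)/(1+|z|)^{3}\le|f'(z)|\le(1+|z|)/(1-|z|)^{3}$, whose radial integrals are exactly $|z|/(1+|z|)^{2}$ and $|z|/(1-|z|)^{2}$, the former tending to $1/4$ and giving the covering disk; for $\mathcal{K}$ the bounds $1/(1+|z|)^{2}\le|f'(z)|\le1/(1-|z|)^{2}$ integrate to $|z|/(1\pm|z|)$ and produce the covering disk $|w|<1/2$. Sharpness again follows from $k$ and $l$. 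Part~$(iii)$ is then immediate from Theorem~\ref{th2.7} together with the classical compactness of $\mathcal{S}$, $\mathcal{S}^{*}$, $\mathcal{K}$ and $\mathcal{C}$.

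The only substantial external input appears in part~$(iv)$, where Theorem~\ref{th2.8} reduces each harmonic radius to the corresponding analytic one. I would collect the classical values: starlike functions are starlike and close-to-convex, and convex functions are starlike, convex and close-to-convex, which forces all the unit entries; the radius of convexity of each of $\mathcal{S}$, $\mathcal{S}^{*}$ and $\mathcal{C}$ is $2-\sqrt{3}$; the radius of starlikeness of $\mathcal{S}$ is $\tanh(\pi/4)$ and that of $\mathcal{C}$ is $4\sqrt{2}-5$; and the radius of close-to-convexity of $\mathcal{S}$ is the value supplied by \cite{krzyz}. The main obstacle here is purely organizational, namely matching each of the six equalities and the three further values to the correct classical radius theorem and checking that Theorem~\ref{th2.8} applies verbatim, rather than establishing any new bound.

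Finally, for part~$(v)$ I would invoke the Ruscheweyh--Sheil-Small convolution theorems. Since $\mathcal{K}$ is closed under convolution, Theorem~\ref{th2.9}$(i)$ gives $f*f\in\mathcal{SK}_{H}^{0}$ for each $f\in\mathcal{SK}_{H}^{0}$, which is $(a)$. For $(b)$ I would take $\mathcal{O}=\mathcal{K}$ in Theorem~\ref{th2.11} and use the closure relations $\mathcal{S}^{*}*\mathcal{K}\subset\mathcal{S}^{*}$, $\mathcal{K}*\mathcal{K}\subset\mathcal{K}$ and $\mathcal{C}*\mathcal{K}\subset\mathcal{C}$, which respectively yield $f\tilde{*}\varphi\in\mathcal{SS}_{H}^{*0}$, $f\tilde{*}\varphi\in\mathcal{SK}_{H}^{0}$ and $f\tilde{*}\varphi\in\mathcal{SC}_{H}^{0}$ whenever $\varphi\in\mathcal{K}$, completing the proof.
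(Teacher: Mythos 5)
Your proposal is correct and follows essentially the same route as the paper: the paper likewise obtains $(i)$--$(iv)$ by feeding the classical coefficient, distortion, compactness and radius results for $\mathcal{S}$, $\mathcal{S}^{*}$, $\mathcal{K}$, $\mathcal{C}$ into Theorems \ref{th2.5}--\ref{th2.8}, and derives $(v)$ from Theorems \ref{th2.9}$(i)$ and \ref{th2.11} via $\mathcal{K}*\mathcal{S}^{*}\subset\mathcal{S}^{*}$, $\mathcal{K}*\mathcal{K}\subset\mathcal{K}$ and $\mathcal{K}*\mathcal{C}\subset\mathcal{C}$. Your write-up simply fills in the explicit classical inputs that the paper leaves to the citations.
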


\begin{proof}
Making use of the well-known coefficient estimates and distortion theorems for functions in the class $\mathcal{S}$ (see \cite{goodman}), parts $(i)$ and $(ii)$ follow by applying Theorems \ref{th2.5} and \ref{th2.6} respectively. Theorem \ref{th2.7} gives $(iii)$, while $(iv)$  follows by using \cite[Chapter 13]{goodman} and Theorem \ref{th2.8}. Since $\mathcal{K}*\mathcal{S}^{*} \subset \mathcal{S}^{*}$,  $\mathcal{K}*\mathcal{K} \subset \mathcal{K}$ and $\mathcal{K}*\mathcal{C} \subset \mathcal{C}$, the convolution properties are easy to deduce from Theorems \ref{th2.9}$(i)$ and \ref{th2.11}.
\end{proof}

We close this section with the following remark.
\begin{remark}\label{rem2.17}
It is clear that the classes $\mathcal{SS}_{H}^{0}$, $\mathcal{SS}_{H}^{*0}$ and $\mathcal{SC}_{H}^{0}$ are not closed under convolution. However, since $\mathcal{K}*\mathcal{K} \subset \mathcal{K}$, $\mathcal{K} \triangleright\mathcal{SK}_{H}^{0}$ and $\mathcal{K}$ is non-convex, it is expected that $\mathcal{SK}_{H}^{0}$ is also not closed under convolution in view of Corollary \ref{cor2.10}. It will be an interesting open problem to determine whether $\mathcal{SK}_{H}^{0}$ is closed under convolution.
\end{remark}


\section{Harmonic analogues of subclasses of $\mathcal{S}$}\label{sec3}
In this section, we will determine the harmonic analogues of certain subclasses of $\mathcal{S}$. Apart from results of Section \ref{sec2}, we will make use of the following two lemmas which are the generalization of Theorems \ref{th2.9} and \ref{th2.11}. Their proof being similar are omitted.
\begin{lemma}\label{lem3.1}
Let $\mathcal{I}$ and $\mathcal{J}$ be subfamilies of $\mathcal{S}$ such that $\mathcal{I}*\mathcal{I}\subset \mathcal{J}$. If $\mathcal{I}_{H}^{0}$ and $\mathcal{J}_{H}^{0}$ denote the harmonic analogues of $I$ and $J$ respectively, then
\begin{itemize}
  \item [$(a)$] If $f \in \mathcal{I}_{H}^{0}$, then $f*f \in \mathcal{J}_{H}^{0}$;
  \item [$(b)$] If $(f+g)/2 \in \mathcal{J}$ for all $f$, $g \in \mathcal{J}$, then $\mathcal{I}_{H}^{0}*\mathcal{I}_{H}^{0} \subset \mathcal{J}_{H}^{0}$.
\end{itemize}
\end{lemma}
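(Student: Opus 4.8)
The plan is to imitate the proof of Theorem \ref{th2.9} almost verbatim, the only change being that the two factors of each convolution will now be drawn from $\mathcal{I}$ while the product is required to lie in $\mathcal{J}$, which is exactly what the hypothesis $\mathcal{I}*\mathcal{I}\subset\mathcal{J}$ supplies. In both parts the whole question reduces, via Definition \ref{def}, to showing that the relevant analytic combination $H+\epsilon G$ (where $H$ and $G$ are the analytic and co-analytic parts of the harmonic convolution) belongs to $\mathcal{J}$ for every $|\epsilon|=1$.

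For part $(a)$ I would begin with $f=h+\bar g\in\mathcal{I}_{H}^{0}$, so that $f*f=(h*h)+\overline{(g*g)}$, fix $|\epsilon|=1$, and let $\pm\nu$ denote the square roots of $\epsilon$, whence $|\nu|=1$. The computation then rests on the identity
\[(h*h)+\epsilon(g*g)=(h+i\nu g)*(h-i\nu g),\]
which holds because the two cross terms cancel by commutativity of $*$ and because $-(i\nu)^2=\nu^2=\epsilon$. Since each factor has the form $h+\eta g$ with $|\eta|=1$ and $f\in\mathcal{I}_{H}^{0}$, both $h+i\nu g$ and $h-i\nu g$ lie in $\mathcal{I}$; hence their convolution lies in $\mathcal{J}$ by $\mathcal{I}*\mathcal{I}\subset\mathcal{J}$. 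As $\epsilon$ was arbitrary, this gives $f*f\in\mathcal{J}_{H}^{0}$.

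For part $(b)$ I would take $f_i=h_i+\overline{g_i}\in\mathcal{I}_{H}^{0}$ for $i=1,2$, fix $|\epsilon|=1$, and introduce the auxiliary functions
\[F_1=(h_1-g_1)*(h_2-\epsilon g_2),\qquad F_2=(h_1+g_1)*(h_2+\epsilon g_2).\]
Again by commutativity one checks that $\tfrac12(F_1+F_2)=(h_1*h_2)+\epsilon(g_1*g_2)$. Because $h_1\pm g_1\in\mathcal{I}$ and $h_2\pm\epsilon g_2\in\mathcal{I}$, both $F_1$ and $F_2$ lie in $\mathcal{I}*\mathcal{I}\subset\mathcal{J}$, and the midpoint hypothesis on $\mathcal{J}$ then forces $\tfrac12(F_1+F_2)\in\mathcal{J}$. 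Thus $(h_1*h_2)+\epsilon(g_1*g_2)\in\mathcal{J}$ for every $|\epsilon|=1$, that is, $f_1*f_2\in\mathcal{J}_{H}^{0}$.

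I do not anticipate a genuine obstacle here: the argument is routine once the two factorization identities are recorded, and indeed it is close enough to Theorems \ref{th2.9} and \ref{th2.11} that the paper omits it. The only points meriting care are verifying that the auxiliary factors are of the form $h+\eta g$ with $|\eta|=1$ (so that they remain in $\mathcal{I}$), and noting that the normalization required for membership in $\mathcal{J}_{H}^{0}$ is automatic, since every expression produced is a convolution of normalized members of $\mathcal{S}$ and is therefore itself normalized. One should also double-check the cancellation of cross terms, which relies only on the commutativity of the Hadamard product.
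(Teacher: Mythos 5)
Your proof is correct and is essentially the argument the paper intends: it transplants the factorization identities from the proof of Theorem \ref{th2.9} verbatim, replacing closure of $\mathcal{G}$ under convolution with the hypothesis $\mathcal{I}*\mathcal{I}\subset\mathcal{J}$, which is exactly why the paper omits the proof as ``similar.'' The identities and the membership checks ($|\pm i\nu|=1$, $|\pm\epsilon|=1$) all verify.
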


\begin{lemma}\label{lem3.2}
Suppose that $\mathcal{I}$ and $\mathcal{J}$ be subfamilies of $\mathcal{S}$. Let $\mathcal{O}\subset \mathcal{A}$ be such that $f*g \in \mathcal{J}$ for all $f \in \mathcal{I}$ and $g \in \mathcal{O}$. Then $\varphi \tilde{*} f \in \mathcal{J}_{H}^{0}$ for all $\varphi \in  \mathcal{O}$ and $f \in \mathcal{I}_{H}^{0}$, where $\mathcal{I}\triangleright\mathcal{I}_{H}^{0}$ and $\mathcal{J}\triangleright \mathcal{J}_{H}^{0}$.
\end{lemma}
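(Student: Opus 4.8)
The plan is to reproduce the proof of Theorem~\ref{th2.11} verbatim, but with the single family $\mathcal{G}$ split into a source family $\mathcal{I}$ and a target family $\mathcal{J}$, since the hypothesis now relates a convolution of members of $\mathcal{I}$ and $\mathcal{O}$ to membership in $\mathcal{J}$ rather than back in $\mathcal{I}$. First I would fix $f = h + \bar{g} \in \mathcal{I}_{H}^{0}$ and $\varphi \in \mathcal{O}$, and unwind the definition of the mixed product $\tilde{*}$ to write
\[
\varphi \tilde{*} f = \varphi * h + \overline{\varphi * g} = H + \overline{G},
\]
where $H = \varphi * h$ and $G = \varphi * g$ are analytic in $\mathbb{D}$. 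By Definition~\ref{def}, to conclude that $H + \overline{G} \in \mathcal{J}_{H}^{0}$ it suffices to verify that $H + \epsilon G \in \mathcal{J}$ for every $|\epsilon| = 1$.

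The key (and essentially only) computation is the distributivity of the Hadamard product over the relevant linear combination: for each $|\epsilon| = 1$,
\[
H + \epsilon G = (\varphi * h) + \epsilon (\varphi * g) = \varphi * (h + \epsilon g).
\]
Since $f \in \mathcal{I}_{H}^{0}$ and $\mathcal{I} \triangleright \mathcal{I}_{H}^{0}$, Definition~\ref{def} guarantees that $h + \epsilon g \in \mathcal{I}$ for each $|\epsilon| = 1$. The hypothesis that $f * g \in \mathcal{J}$ for all $f \in \mathcal{I}$ and $g \in \mathcal{O}$, together with the commutativity of $*$, then yields $\varphi * (h + \epsilon g) \in \mathcal{J}$, that is, $H + \epsilon G \in \mathcal{J}$ for each $|\epsilon| = 1$. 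Applying Definition~\ref{def} once more gives $\varphi \tilde{*} f = H + \overline{G} \in \mathcal{J}_{H}^{0}$, as required.

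I do not anticipate a genuine obstacle: the argument is exactly the one-family proof of Theorem~\ref{th2.11} with the source and target classes decoupled, which is precisely why the authors remark that the proof may be omitted. The only point that warrants a moment's care is the normalization of $H + \overline{G}$, but this comes for free: since $\mathcal{J} \subset \mathcal{S}$, the membership $H + \epsilon G \in \mathcal{J}$ automatically forces $(H + \epsilon G)(0) = (H + \epsilon G)'(0) - 1 = 0$, so $H$ and $G$ inherit the correct normalization and no separate verification is needed.
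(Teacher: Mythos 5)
Your proof is correct and is exactly the argument the paper intends: the authors omit the proof of Lemma \ref{lem3.2} precisely because it is the proof of Theorem \ref{th2.11} with the source and target families decoupled, which is what you have written out. The distributivity step $H+\epsilon G=\varphi*(h+\epsilon g)$ and the appeal to Definition \ref{def} match the paper's argument verbatim.
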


Denote by $\mathcal{R}$ the class consisting of functions $f \in \mathcal{A}$ which satisfy $\RE f'(z)>0$ for $z \in \mathbb{D}$.  By well-known Noshiro-Warschawski Theorem (see \cite[Chapter 7, p. 88]{goodman}), $\mathcal{R}\subset \mathcal{S}$. In  \cite{macgregor1}, MacGregor investigated the properties of functions in the class $\mathcal{R}$. Also, it is easy to see that $\mathcal{R}$ is a compact family and is closed under convex combinations. However, the class $\mathcal{R}$ is not closed under convolutions. The analytic function
\begin{equation}\label{eq3.1}
f(z)=-z-2\log(1-z)=z+\sum_{n=2}^{\infty}\frac{2}{n}z^{n}
\end{equation}
belongs to $\mathcal{R}$ but $f*f \not\in \mathcal{R}$. The first theorem of this section determines the harmonic analogue of the class $\mathcal{R}$ and discusses its properties.

\begin{theorem}\label{th3.3}
The class $\mathcal{R}_{H}^{0}$ is the harmonic analogue of $\mathcal{R}$ where
\[\mathcal{R}_{H}^{0}=\{f=h+\bar{g}\in \mathcal{H}: \RE h'(z)>|g'(z)| \mbox{ for all } z \in \mathbb{D}\}.\]
In particular, $\mathcal{R}_{H}^{0} \subset \mathcal{SC}_{H}^{0}$. Moreover, we have
\begin{itemize}
  \item [$(i)$] If $f=h+\bar{g} \in \mathcal{R}_{H}^{0}$ where $h$ and $g$ are given by \eqref{eq1.1}, then $||a_n|-|b_n||\leq 2/n$ for $n=2,3,\ldots$. Equality holds for the function $f$ given by \eqref{eq3.1}.
  \item [$(ii)$] Every function $f \in \mathcal{R}_{H}^{0}$ satisfies
  \[-|z|+2\log(1+|z|)\leq |f(z)|\leq -|z|-2\log(1-|z|), \quad z \in \mathbb{D}\]
  and hence the range of each function $f \in \mathcal{R}_{H}^{0}$ contains the disk $|w|<2\log 2-1$. These results are sharp for the function $f$ given by \eqref{eq3.1}.
  \item [$(iii)$] The class $\mathcal{R}_{H}^{0}$ is compact with respect to the topology of locally uniform convergence.
  \item [$(iv)$] $r_{C}(\mathcal{R}_{H}^{0})=\sqrt{2}-1$ and $r_{CC}(\mathcal{R}_{H}^{0})=1$.
  \item [$(v)$] If $\varphi \in \mathcal{K}$ and $f \in \mathcal{R}_{H}^{0}$, then $f \tilde{*} \varphi \in \mathcal{R}_{H}^{0}$. Also, if $f \in \mathcal{A}$ with $\RE \varphi(z)/z>1/2$ for $z \in \mathbb{D}$ and $f \in \mathcal{R}_{H}^{0}$, then $f \tilde{*} \varphi \in \mathcal{R}_{H}^{0}$.
  \item [$(vi)$] The class $\mathcal{R}_{H}^{0}$ is closed under convex combinations of its members.
\end{itemize}
\end{theorem}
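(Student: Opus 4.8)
The plan is to first pin down the defining condition, and then let the transfer machinery of Section~\ref{sec2} do most of the work. By Definition~\ref{def}, a harmonic $f=h+\bar g\in\mathcal{H}$ lies in the harmonic analogue of $\mathcal{R}$ precisely when $\RE\bigl(h'(z)+\epsilon g'(z)\bigr)>0$ for every $|\epsilon|=1$ and every $z\in\mathbb{D}$. Fixing $z$ and letting $\epsilon$ run over the unit circle, the quantity $\RE(\epsilon g'(z))$ sweeps out $[-|g'(z)|,|g'(z)|]$, so
\[
\min_{|\epsilon|=1}\RE\bigl(h'(z)+\epsilon g'(z)\bigr)=\RE h'(z)-|g'(z)|,
\]
the minimum being attained because the circle is compact. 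Hence the family condition holds for all $\epsilon$ if and only if $\RE h'(z)>|g'(z)|$, which is exactly the defining inequality of $\mathcal{R}_{H}^{0}$; this proves $\mathcal{R}\triangleright\mathcal{R}_{H}^{0}$. Since $\RE f'(z)>0$ makes $f$ close-to-convex with respect to the identity (so $\mathcal{R}\subset\mathcal{C}$) and $\mathcal{C}\triangleright\mathcal{SC}_{H}^{0}$, Theorem~\ref{th2.2}$(iv)$ gives $\mathcal{R}_{H}^{0}\subset\mathcal{SC}_{H}^{0}$.

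Parts $(i)$--$(iv)$ and $(vi)$ then follow mechanically by feeding MacGregor's classical facts about $\mathcal{R}$ into the theorems of Section~\ref{sec2}. For $(i)$, the sharp bound $|a_n(f)|\le 2/n$ on $\mathcal{R}$ (Carath\'eodory applied to $f'$) gives $p(n)=2/n$ in Theorem~\ref{th2.5}$(a)$; since the function \eqref{eq3.1} attains $|a_n|=2/n$, Theorem~\ref{th2.5}$(b)$ forces $g\equiv 0$ in the equality case. For $(ii)$, I would apply Theorem~\ref{th2.6} with $P(\rho)=(1-\rho)/(1+\rho)$ and $Q(\rho)=(1+\rho)/(1-\rho)$, using $|f'(z)|\ge\RE f'(z)\ge(1-|z|)/(1+|z|)$ and $|f'(z)|\le(1+|z|)/(1-|z|)$; integrating yields the stated growth bounds, and $\lim_{r\to1}(-r+2\log(1+r))=2\log 2-1$ gives the covering disk. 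Part $(iii)$ is Theorem~\ref{th2.7} applied to the compact family $\mathcal{R}$; part $(iv)$ is Theorem~\ref{th2.8} together with MacGregor's radius of convexity $\sqrt2-1$ and the fact that $r_{CC}(\mathcal{R})=1$ because $\mathcal{R}\subset\mathcal{C}$; and $(vi)$ is Theorem~\ref{th2.12}, as $\mathcal{R}$ is closed under convex combinations.

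The only genuinely analytic step is $(v)$, which I expect to be the main obstacle. By Lemma~\ref{lem3.2} with $\mathcal{I}=\mathcal{J}=\mathcal{R}$ it suffices to show that $\mathcal{R}$ is closed under convolution with each admissible family $\mathcal{O}$: namely $\mathcal{O}=\mathcal{K}$ and $\mathcal{O}=\{\varphi\in\mathcal{A}:\RE(\varphi(z)/z)>1/2\}$. Because convex functions satisfy $\RE(\varphi(z)/z)>1/2$, the first family sits inside the second, so it is enough to treat the latter. The key is the identity $z(\varphi*f)'(z)=\varphi(z)*\bigl(zf'(z)\bigr)$, which I would verify by comparing Taylor coefficients. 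Writing $p=\varphi/z$ (so $p(0)=1$, $\RE p>1/2$) and $q=f'$ (so $q(0)=1$, $\RE q>0$), a coefficient check reduces $(\varphi*f)'$ to the coefficientwise Hadamard product $p\odot q$. Using the Herglotz-type representation $p(z)=\int_{|\zeta|=1}(1-\zeta z)^{-1}\,d\mu(\zeta)$ for a probability measure $\mu$, one gets $(p\odot q)(z)=\int_{|\zeta|=1}q(\zeta z)\,d\mu(\zeta)$, whence $\RE(\varphi*f)'(z)=\int_{|\zeta|=1}\RE q(\zeta z)\,d\mu(\zeta)>0$. Thus $\varphi*f\in\mathcal{R}$, and Lemma~\ref{lem3.2} transfers this to $\varphi\,\tilde{*}\,f\in\mathcal{R}_{H}^{0}$, yielding both assertions of $(v)$.

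The delicate points to handle carefully are the index bookkeeping in the identity $z(\varphi*f)'=\varphi*(zf')$ and its passage to $p\odot q$, and the correct normalization of the Herglotz representation for functions of real part exceeding $1/2$; everything else is a routine invocation of the Section~\ref{sec2} transfer theorems once the classical $\mathcal{R}$-data (coefficient, growth, radius, compactness, convexity) are cited.
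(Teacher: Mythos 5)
Your proposal is correct and follows the paper's strategy almost exactly: characterize the harmonic analogue via $\min_{|\epsilon|=1}\RE\bigl(h'+\epsilon g'\bigr)=\RE h'-|g'|$, deduce $\mathcal{R}_{H}^{0}\subset\mathcal{SC}_{H}^{0}$ from $\mathcal{R}\subset\mathcal{C}$ and Theorem \ref{th2.2}$(iv)$, and then feed MacGregor's coefficient, distortion and radius-of-convexity data for $\mathcal{R}$ into Theorems \ref{th2.5}--\ref{th2.8} and \ref{th2.12}. The one place you diverge is part $(v)$: the paper treats the two claims separately, citing $\mathcal{K}*\mathcal{R}\subset\mathcal{R}$ for the convex case and a lemma of Singh and Singh for the $\RE(\varphi(z)/z)>1/2$ case, together with the identity $(f*\varphi)'=f'*\bigl(\varphi(z)/z\bigr)$ and Theorem \ref{th2.11}. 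You instead observe that $\varphi\in\mathcal{K}$ forces $\RE(\varphi(z)/z)>1/2$ (Marx--Str\"ohhacker), so the convex case is subsumed, and you prove the remaining convolution fact from scratch via the Herglotz representation $p(z)=\int_{|\zeta|=1}(1-\zeta z)^{-1}\,d\mu(\zeta)$, which gives $\RE(\varphi*f)'(z)=\int\RE f'(\zeta z)\,d\mu(\zeta)>0$. This is exactly the content of the cited Singh--Singh lemma, so your version is self-contained and slightly more unified, at the cost of reproving a known result; both routes are valid.
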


\begin{proof}
Suppose that $\mathcal{R} \triangleright \mathcal{G}_{H}^{0}$. If $f=h+\bar{g} \in \mathcal{G}_{H}^{0}$, then the inequality $\RE (h'(z)+\epsilon g'(z))>0$ holds for each $z \in \mathbb{D}$ and $|\epsilon|=1$. With appropriate choice of $\epsilon=\epsilon(z)$, it follows that
\[\RE h'(z)>|g'(z)|,\quad z\in \mathbb{D}\]
so that $f \in \mathcal{R}_{H}^{0}$. To prove the reverse inclusion, let $f=h+\bar{g} \in \mathcal{R}_{H}^{0}$. Then for $|\epsilon|=1$ we have
\[\RE (h'(z)+\epsilon g'(z))\geq \RE h'(z)-|g'(z)|>0,\quad z \in \mathbb{D}\]
which imply that $h+\epsilon g \in \mathcal{R}$ and hence $f \in \mathcal{G}_{H}^{0}$. This shows that $\mathcal{R}\triangleright \mathcal{R}_{H}^{0}$.

Since $\mathcal{R} \subset \mathcal{C}$, $\mathcal{R}_{H}^{0} \subset \mathcal{SC}_{H}^{0}$ by Theorem \ref{th2.2}$(iv)$. In view of \cite[Theorems 1 and 2, p. 533]{macgregor1}, the proof of parts $(i)$, $(ii)$ and $(iv)$ follow by applying Theorems \ref{th2.5}, \ref{th2.6} and \ref{th2.8} respectively. Theorems \ref{th2.7} and \ref{th2.12} verify the validity of $(iii)$ and $(vi)$ respectively. Since $\mathcal{K}* \mathcal{R} \subset \mathcal{R}$ (by \cite[Corollary 3.10]{naveen}), Theorem \ref{th2.11} shows that $f \tilde{*} \varphi \in \mathcal{R}_{H}^{0}$ if $\varphi \in \mathcal{K}$ and $f \in \mathcal{R}_{H}^{0}$. Regarding the proof of the other part of $(v)$, it suffices to show that if $\varphi \in \mathcal{A}$ with $\RE \varphi(z)/z>1/2$ and $f \in \mathcal{R}$, then $f * \varphi \in \mathcal{R}$. To see this, note that $(f*\varphi)'(z)=f'(z)*\varphi(z)/z$ for $z \in \mathbb{D}$. By \cite[Lemma 4, p.146]{singh2}, it follows that $\RE (f*\varphi)'>0$ so that $f * \varphi \in \mathcal{R}$. This concludes the theorem.
\end{proof}

Note that Mocanu \cite{mocanu} independently proved that if $f$ is a harmonic mapping in a convex domain $\Omega$ such that $\RE f_{z}(z)>|f_{\bar{z}}(z)|$ for $z \in \Omega$, then $f$ is univalent and sense-preserving in $\Omega$ while Ponnusamy \emph{et al.} \cite{ponnusamy} showed that members of $\mathcal{R}_{H}^{0}$ are close-to-convex in $\mathbb{D}$.

In \cite{chichra}, Chichra introduced the class $\mathcal{W}$ of analytic functions $f \in \mathcal{A}$ which satisfy $\RE(f'(z)+zf''(z))>0$ for $z \in \mathbb{D}$. He proved that the members of $\mathcal{W}$ are univalent in $\mathbb{D}$ by showing that $\mathcal{W}\subset \mathcal{R}$. Later R. Singh and S. Singh \cite{singh1} proved that $\mathcal{W} \subset \mathcal{S}^{*}$. The class $\mathcal{W}$ is compact and is closed under convex combination of its members. Similar to the proof of Theorem \ref{th3.3}, it can be shown that the set
\[\mathcal{W}_{H}^{0}=\{f=h+\bar{g}\in \mathcal{H}: \RE (h'(z)+zh''(z))>|g'(z)+zg''(z)| \mbox{ for all } z \in \mathbb{D}\}.\]
is the harmonic analogue of $\mathcal{W}$. By Theorem \ref{th2.2}$(iv)$, $\mathcal{W}_{H}^{0} \subset \mathcal{R}_{H}^{0}\cap \mathcal{SS}_{H}^{*0}$. In particular, the members of $\mathcal{W}_{H}^{0}$ are fully starlike in $\mathbb{D}$ by Corollary \ref{cor2.4}. To determine the coefficient and growth estimates for functions in the class $\mathcal{W}_{H}^{0}$, we need to prove the following lemma.

\begin{lemma}\label{lem3.4}
If $f \in \mathcal{W}$ is given by \eqref{eq2.1}, then $|a_n|\leq 2/n^2$ for $n=2,3,\ldots$ and
\[-1+\frac{2}{|z|}\log(1+|z|)\leq|f'(z)| \leq -1-\frac{2}{|z|}\log(1-|z|),\quad z \in \mathbb{D}.\]
The function
\begin{equation}\label{eq3.2}
f(z)=-z-2\int_{0}^{|z|}\frac{1}{t}\log (1-t)\,dt=z+\sum_{n=2}^{\infty}\frac{2}{n^2}z^n
\end{equation}
shows that all these results are sharp.
\end{lemma}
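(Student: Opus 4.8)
The plan is to reduce both assertions to the already-known sharp coefficient and growth estimates for $\mathcal{R}$ by means of the substitution $\phi(z):=zf'(z)$. Writing $f(z)=z+\sum_{n=2}^{\infty}a_nz^n$ as in \eqref{eq2.1}, we have $\phi(z)=z+\sum_{n=2}^{\infty}na_nz^n$, so that $\phi\in\mathcal{A}$ with $\phi(0)=0$ and $\phi'(0)=1$. Since $\phi'(z)=f'(z)+zf''(z)$, the defining condition $\RE(f'(z)+zf''(z))>0$ of $\mathcal{W}$ says precisely that $\RE\phi'(z)>0$, i.e. $\phi\in\mathcal{R}$. Hence $f\in\mathcal{W}$ if and only if $\phi=zf'\in\mathcal{R}$; equivalently, since $\Lambda[\phi](z)=\int_0^z \phi(t)/t\,dt=\int_0^z f'(t)\,dt=f(z)$, the function $f$ is the Alexander transform of a member of $\mathcal{R}$, so that $\mathcal{W}=\Lambda(\mathcal{R})$.

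For the coefficient bound I would invoke the sharp estimate $|c_n|\leq 2/n$ for the $n$th Taylor coefficient $c_n$ of a function in $\mathcal{R}$ (see \cite[Theorems 1 and 2, p.~533]{macgregor1}). Applying this to $\phi$, whose $n$th coefficient equals $na_n$, gives $|na_n|\leq 2/n$, that is, $|a_n|\leq 2/n^2$ for $n=2,3,\ldots$. For the growth of $|f'|$ I would use $f'(z)=\phi(z)/z$, so that $|f'(z)|=|\phi(z)|/|z|$. MacGregor's growth theorem for $\mathcal{R}$ yields
\[-|z|+2\log(1+|z|)\leq|\phi(z)|\leq -|z|-2\log(1-|z|),\quad z\in\mathbb{D},\]
and dividing through by $|z|$ produces exactly the asserted two-sided bound for $|f'(z)|$. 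Here the two-sided bound for $|\phi|$ comes from integrating the modulus estimates $(1-\rho)/(1+\rho)\leq|\phi'|\leq(1+\rho)/(1-\rho)$ for the Carath\'eodory-type function $\phi'$, along the radial segment (for the upper bound) and along the preimage of the radial segment to $\phi(z)$ (for the lower bound), in the same manner as in the proof of Theorem \ref{th2.6}.

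Finally, sharpness is inherited from the extremal function of $\mathcal{R}$. The function in \eqref{eq3.1}, namely $\phi_0(z)=-z-2\log(1-z)=z+\sum_{n=2}^{\infty}(2/n)z^n$, is the extremal member of $\mathcal{R}$ for both estimates. The corresponding $f$ satisfies $zf'(z)=\phi_0(z)$, whence $f'(z)=-1-(2/z)\log(1-z)$, and integrating gives precisely the function \eqref{eq3.2}, for which $a_n=2/n^2$ and both growth inequalities become equalities in the radial directions. I anticipate no genuine obstacle: the whole argument rests on recognizing that $\phi=zf'$ carries $\mathcal{W}$ onto $\mathcal{R}$, after which the coefficient and growth statements are direct translations of MacGregor's results. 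The only point requiring care is the correct bookkeeping of the radial and preimage integrations that deliver the sharp lower growth bound for $|\phi|$.
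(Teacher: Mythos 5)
Your proposal is correct and is essentially the paper's own argument: the paper's entire proof is the observation that $f\in\mathcal{W}$ if and only if $zf'\in\mathcal{R}$, followed by an appeal to MacGregor's coefficient and growth theorems for $\mathcal{R}$. You have simply written out the bookkeeping (the $n$th coefficient of $zf'$ being $na_n$, and $|f'(z)|=|zf'(z)|/|z|$) that the paper leaves implicit.
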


\begin{proof}
Observe that $f \in \mathcal{W}$ if and only if $z f' \in \mathcal{R}$. The proof now follows by applying \cite[Theorem 1, p. 533]{macgregor1}.
\end{proof}

\begin{theorem}\label{th3.5}
Let $f=h+\bar{g} \in \mathcal{W}_{H}^{0}$ where $h$ and $g$ are given by \eqref{eq1.1}. Then $||a_n|-|b_n||\leq 2/n^2$ for $n=2,3,\ldots$ and
\[-|z|+2\int_{0}^{|z|}\frac{1}{t}\log(1+t)\,dt\leq|f(z)| \leq -|z|-2\int_{0}^{|z|}\frac{1}{t}\log(1-t)\,dt,\quad z \in \mathbb{D}.\]
In particular, the range $f(\mathbb{D})$ contains the disk $|w|<\pi^2/6-1$. All these results are sharp for the function $f$ given by \eqref{eq3.2}. Moreover, the following statements regarding the class $\mathcal{W}_{H}^{0}$ hold.
\begin{itemize}
  \item [$(i)$] The class $\mathcal{W}_{H}^{0}$ is compact with respect to the topology of locally uniform convergence.
  \item [$(ii)$] $r_{S}(\mathcal{W}_{H}^{0})=1=r_{CC}(\mathcal{W}_{H}^{0})$.
  \item [$(iii)$] The class $\mathcal{W}_{H}^{0}$ is closed under convolutions.
  \item [$(iv)$] \begin{itemize}
                   \item [$(a)$] If $\varphi \in \mathcal{K}$ and $f \in \mathcal{W}_{H}^{0}$, then $\varphi\tilde{*}f  \in \mathcal{W}_{H}^{0}$;
                   \item [$(b)$] If $\varphi \in \mathcal{A}$ with $\RE \varphi(z)/z>1/2$ and $f \in \mathcal{W}_{H}^{0}$, then $\varphi\tilde{*}f  \in \mathcal{W}_{H}^{0}$;
                   \item [$(c)$] If $\varphi \in \mathcal{W}$ and $f \in \mathcal{W}_{H}^{0}$, then $\varphi\tilde{*}f  \in \mathcal{W}_{H}^{0}\cap \mathcal{SK}_{H}^{0}$.
                 \end{itemize}
  \item [$(v)$] The class $\mathcal{W}_{H}^{0}$ is closed under convex combinations.
\end{itemize}
\end{theorem}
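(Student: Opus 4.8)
The plan is to recognize that every assertion is an instance of the transfer machinery of Section~\ref{sec2} applied to $\mathcal{G}=\mathcal{W}$, for which the needed facts about the analytic class are already in hand: $\mathcal{W}$ is compact and closed under convex combinations, $\mathcal{W}\subset\mathcal{R}\cap\mathcal{S}^{*}$, and Lemma~\ref{lem3.4} supplies the sharp coefficient bound $|a_n|\le 2/n^{2}$ together with the two-sided derivative estimate. First I would read off $||a_n|-|b_n||\le 2/n^{2}$ directly from Theorem~\ref{th2.5}$(a)$ with $p(n)=2/n^{2}$, and obtain both the sharpness and the rigidity (equality forcing $g\equiv0$) from Theorem~\ref{th2.5}$(b)$ applied to the function \eqref{eq3.2}, which lies in $\mathcal{W}\subset\mathcal{W}_{H}^{0}$ and realizes $|a_n|=2/n^{2}$. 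The growth bounds follow by feeding $P(\rho)=-1+(2/\rho)\log(1+\rho)$ and $Q(\rho)=-1-(2/\rho)\log(1-\rho)$ from Lemma~\ref{lem3.4} into Theorem~\ref{th2.6} and integrating, which reproduces the two displayed integrals verbatim.

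For the covering constant I would evaluate the limit furnished by Theorem~\ref{th2.6}: $\lim_{|z|\to1}\int_{0}^{|z|}P(\rho)\,d\rho=-1+2\int_{0}^{1}t^{-1}\log(1+t)\,dt=-1+2\cdot\tfrac{\pi^{2}}{12}=\tfrac{\pi^{2}}{6}-1$, using the standard dilogarithm value $\int_0^1 t^{-1}\log(1+t)\,dt=\pi^2/12$. Part $(i)$ is then immediate from the compactness of $\mathcal{W}$ via Theorem~\ref{th2.7}; part $(ii)$ follows from Theorem~\ref{th2.8} together with $\mathcal{W}\subset\mathcal{S}^{*}$, since every function of $\mathcal{W}$ is starlike (hence close-to-convex) in all of $\mathbb{D}$, giving $r_{S}(\mathcal{W})=r_{CC}(\mathcal{W})=1$; and part $(v)$ is Theorem~\ref{th2.12} applied to the convex-combination closure of $\mathcal{W}$.

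The convolution assertions $(iv)(a)$ and $(iv)(b)$ I would reduce to the corresponding facts about $\mathcal{R}$ already exploited in Theorem~\ref{th3.3}$(v)$. Using the identity $z(\varphi*g)'=\varphi*(zg')$ and the characterization $g\in\mathcal{W}\iff zg'\in\mathcal{R}$, membership $\varphi*g\in\mathcal{W}$ is equivalent to $\varphi*(zg')\in\mathcal{R}$ with $zg'\in\mathcal{R}$; the inclusions $\mathcal{K}*\mathcal{R}\subset\mathcal{R}$ and $\{\varphi:\RE\varphi(z)/z>1/2\}*\mathcal{R}\subset\mathcal{R}$ then yield $\mathcal{K}*\mathcal{W}\subset\mathcal{W}$ and its half-plane analogue, and Lemma~\ref{lem3.2} (equivalently Theorem~\ref{th2.11}) transfers these to the statements about $\varphi\,\tilde{*}\,f$.

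The genuine obstacle is $(iii)$ together with the second half of $(iv)(c)$, both of which rest on the two inclusions $\mathcal{W}*\mathcal{W}\subset\mathcal{W}$ and $\mathcal{W}*\mathcal{W}\subset\mathcal{K}$. My plan here is to pass to the Alexander picture: writing $F=zf'$ and $G=zg'\in\mathcal{R}$ one has $z(f*g)'=(zf')*g=\Lambda[F*G]$, so that $f*g\in\mathcal{W}\iff\Lambda[F*G]\in\mathcal{R}\iff\RE\big((F*G)(z)/z\big)>0$, while $f*g\in\mathcal{K}\iff\Lambda[F*G]\in\mathcal{S}^{*}$. Everything therefore hinges on two positive-real-part lemmas: (A) for $F,G\in\mathcal{R}$ one has $\RE\big((F*G)(z)/z\big)>0$, which via the Herglotz representation $F'(z)=\int\frac{1+xz}{1-xz}\,d\mu(x)$ and averaging collapses to the single-variable inequality $\RE\big(\mathrm{Li}_2(w)/w\big)>3/4$ on $\mathbb{D}$; and (B) if $\RE(H(z)/z)>0$ then $\Lambda[H]\in\mathcal{S}^{*}$. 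Granting (A) I obtain $\mathcal{W}*\mathcal{W}\subset\mathcal{W}$, whence $(iii)$ follows from Corollary~\ref{cor2.10} (as $\mathcal{W}$ is a convex set closed under convolution) and the $\mathcal{W}_{H}^{0}$-part of $(iv)(c)$ from Lemma~\ref{lem3.2} with $\mathcal{J}=\mathcal{W}$; granting also (B) gives $\mathcal{W}*\mathcal{W}\subset\mathcal{K}$ and hence the $\mathcal{SK}_{H}^{0}$-part of $(iv)(c)$ by Lemma~\ref{lem3.2} with $\mathcal{J}=\mathcal{K}$ (since $\mathcal{K}\triangleright\mathcal{SK}_{H}^{0}$). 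I expect the verification of the dilogarithm estimate (A) and of the starlikeness lemma (B) to be the main analytic work; alternatively these convolution closures may be quotable directly from the Singh--Singh literature.
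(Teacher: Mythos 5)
Your overall architecture coincides with the paper's: every assertion is obtained by feeding known facts about the analytic class $\mathcal{W}$ into the transfer machinery of Section~\ref{sec2} (Theorems~\ref{th2.5}--\ref{th2.8}, \ref{th2.11}, \ref{th2.12}, Corollary~\ref{cor2.10}, Lemma~\ref{lem3.2}), with Lemma~\ref{lem3.4} supplying the coefficient and derivative bounds and the dilogarithm value $\int_0^1 t^{-1}\log(1+t)\,dt=\pi^2/12$ giving the covering constant. The only place you diverge is in how the analytic convolution inclusions are obtained: the paper simply quotes $\mathcal{K}*\mathcal{W}\subset\mathcal{W}$ from Ali--Jain--Ravichandran and $\mathcal{W}*\mathcal{W}\subset\mathcal{W}\cap\mathcal{K}$ together with the half-plane result from Singh--Singh, whereas you sketch derivations. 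Your reduction of $(iv)(a)$--$(b)$ to the corresponding $\mathcal{R}$-facts via $z(\varphi*g)'=\varphi*(zg')$ is correct, and your fallback of quoting Singh--Singh for $(iii)$ and $(iv)(c)$ is exactly what the paper does.

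The one genuine problem is your lemma (B). The statement ``$\RE\bigl(H(z)/z\bigr)>0$ implies $\Lambda[H]\in\mathcal{S}^{*}$'' is equivalent to the inclusion $\mathcal{R}\subset\mathcal{S}^{*}$: given any $F\in\mathcal{R}$, put $H=zF'$, so that $\RE(H/z)=\RE F'>0$ and $\Lambda[H]=F$; conversely $\Lambda[H]'=H/z$ always has positive real part. But $\mathcal{R}\not\subset\mathcal{S}^{*}$ (this is precisely why Singh and Singh needed the stronger hypothesis $\RE(f'+zf'')>0$ to conclude starlikeness, and why the paper records only $r_{CC}(\mathcal{R}_{H}^{0})=1$ and not $r_{S}(\mathcal{R}_{H}^{0})=1$ in Theorem~\ref{th3.3}). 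So (A) together with (B) cannot deliver $\mathcal{W}*\mathcal{W}\subset\mathcal{K}$; knowing only $\RE\bigl((F*G)(z)/z\bigr)>0$ gives $\Lambda[F*G]\in\mathcal{R}$, i.e.\ $f*g\in\mathcal{W}$, which is the $\mathcal{W}$-half of $(iv)(c)$ but not the $\mathcal{SK}_{H}^{0}$-half. For the convexity conclusion you must either invoke the Singh--Singh theorem $\mathcal{W}*\mathcal{W}\subset\mathcal{K}$ directly (as the paper does, via Lemma~\ref{lem3.1}/\ref{lem3.2} with $\mathcal{J}=\mathcal{K}$ and $\mathcal{K}\triangleright\mathcal{SK}_{H}^{0}$) or prove a genuinely stronger estimate on $F*G$ than positivity of $(F*G)/z$. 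With that correction --- or simply by taking your own ``quotable from the literature'' branch --- the proof goes through.
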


\begin{proof}
The growth and coefficient estimates for the class $\mathcal{W}_{H}^{0}$ follow by Lemma \ref{lem3.4}. Since $\mathcal{W}$ is convex and closed under convolutions (see \cite{singh2}), the class $\mathcal{W}_{H}^{0}$ is closed under convolutions by Corollary \ref{cor2.10}. This proves $(iii)$. Since $\mathcal{W}_{H}^{0} \subset \mathcal{SS}_{H}^{*0}$, $(ii)$ is obviously true. To prove $(iv)$, note that $\mathcal{K}*\mathcal{W}\subset \mathcal{W}$ (by \cite[Corollary 3.10]{naveen}) and if $\varphi \in \mathcal{A}$ with $\RE \varphi(z)/z>1/2$ and $f \in \mathcal{W}$, then $f * \varphi \in \mathcal{W}$ (by \cite[Theorem 3', p. 150]{singh2}). These observations lead to $(a)$ and $(b)$ by applying Theorem \ref{th2.11}. Since $\mathcal{W}*\mathcal{W}\subset \mathcal{W}\cap \mathcal{K}$ (by \cite{singh2}), part $(c)$ follows by Lemma \ref{lem3.2}. Theorems \ref{th2.7} and \ref{th2.12} verify the validity of the parts $(i)$ and $(v)$ respectively.
\end{proof}

\begin{remark}\label{rem3.6}
Since $\mathcal{W}*\mathcal{W}\subset \mathcal{K}$, the convolution of each member of $\mathcal{W}_{H}^{0}$ with itself is convex in $\mathbb{D}$ by Lemma \ref{lem3.1}$(a)$. However, since $\mathcal{K}$ is non-convex, it is not known whether $\mathcal{W}_{H}^{0}*\mathcal{W}_{H}^{0} \subset \mathcal{SK}_{H}^{0}$ in view of Lemma \ref{lem3.1}$(b)$.
\end{remark}

Let $\mathcal{U}$ and $\mathcal{V}$ be subclasses of $\mathcal{A}$ consisting of functions $f$ of the form \eqref{eq2.1} that satisfy
\[\sum_{n=2}^{\infty}n|a_n|\leq 1\quad \mbox{and}\quad \sum_{n=2}^{\infty}n^2 |a_n|\leq 1\]
respectively. Clearly $\mathcal{V}\subset \mathcal{U}$. In \cite{good}, Goodman proved that $\mathcal{U}\subset \mathcal{S}^{*}$ and $\mathcal{V}\subset \mathcal{K}$. It is easy to see that $\mathcal{U}\subset \mathcal{R}$ and $\mathcal{V}\subset \mathcal{W}$. In fact, if $f \in \mathcal{U}$ is given by \eqref{eq2.1}, then
\[\RE f'(z)=1+\RE\sum_{n=2}^{\infty}n a_{n}z^{n-1}>1-\sum_{n=2}^{\infty}n |a_n|>0.\]
Similarly, if $f \in \mathcal{V}$ is given by \eqref{eq2.1}, then
\[\RE (f'(z)+z f''(z))=1+\RE\sum_{n=2}^{\infty}n^2 a_{n}z^{n-1}>1-\sum_{n=2}^{\infty}n^2 |a_n|>0.\]
The next theorem determines the harmonic analogue of the classes $\mathcal{U}$ and $\mathcal{V}$.

\begin{theorem}\label{th3.7}
The harmonic analogues of the classes $\mathcal{U}$ and $\mathcal{V}$ are given by
\[\mathcal{U}_{H}^{0}=\left\{f(z)=z+\sum_{n=2}^{\infty} a_n z^n+\overline{\sum_{n=2}^{\infty}b_n z^n}\in \mathcal{H}:\sum_{n=2}^{\infty} n(|a_n|+|b_n|)\leq 1\right\}\]
and
\[\mathcal{V}_{H}^{0}=\left\{f(z)=z+\sum_{n=2}^{\infty} a_n z^n+\overline{\sum_{n=2}^{\infty}b_n z^n}\in \mathcal{H}:\sum_{n=2}^{\infty} n^2 (|a_n|+|b_n|)\leq 1\right\}\]
respectively.
\end{theorem}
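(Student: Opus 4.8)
The plan is to treat both assertions at once by setting $w(n)=n$ in the case of $\mathcal{U}$ and $w(n)=n^{2}$ in the case of $\mathcal{V}$, so that $\mathcal{U}$ (resp.\ $\mathcal{V}$) consists exactly of those $\varphi(z)=z+\sum_{n\ge 2}c_{n}z^{n}\in\mathcal{A}$ with $\sum_{n\ge 2}w(n)|c_{n}|\le 1$. If $f=h+\bar g$ with $h,g$ as in \eqref{eq1.1}, then $h+\epsilon g=z+\sum_{n\ge 2}(a_{n}+\epsilon b_{n})z^{n}$ is automatically normalized, so by Definition \ref{def} the map $f$ lies in the harmonic analogue if and only if $\sum_{n\ge 2}w(n)\,|a_{n}+\epsilon b_{n}|\le 1$ for every $|\epsilon|=1$. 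Thus the theorem reduces to the single equivalence
\[
\Bigl(\sum_{n\ge 2}w(n)\,|a_{n}+\epsilon b_{n}|\le 1\ \text{ for all }|\epsilon|=1\Bigr)\iff \sum_{n\ge 2}w(n)\bigl(|a_{n}|+|b_{n}|\bigr)\le 1,
\]
which I would establish as two separate implications.

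The implication "$\Leftarrow$" (that the stated coefficient set is contained in the harmonic analogue) I would dispatch with the triangle inequality: for each fixed $|\epsilon|=1$ one has $|a_{n}+\epsilon b_{n}|\le |a_{n}|+|b_{n}|$, so $\sum_{n\ge 2}w(n)|a_{n}+\epsilon b_{n}|\le\sum_{n\ge 2}w(n)(|a_{n}|+|b_{n}|)\le 1$. Since the bound is uniform in $\epsilon$, the coefficient condition forces $h+\epsilon g\in\mathcal{U}$ (resp.\ $\mathcal{V}$) for every $\epsilon$, hence $f$ in the analogue. This step is routine.

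The reverse implication "$\Rightarrow$" is where I expect the genuine work, and it is the step I would flag as the main obstacle. The tempting idea is to recover each term $|a_{n}|+|b_{n}|$ by choosing the rotation that aligns $\epsilon b_{n}$ with $a_{n}$, namely $\epsilon=e^{i(\arg a_{n}-\arg b_{n})}$, for which $|a_{n}+\epsilon b_{n}|=|a_{n}|+|b_{n}|$. The trouble is that this optimal $\epsilon$ depends on $n$, so no single choice makes all summands attain their maxima at once; phrased differently, the argument wants to interchange $\sup_{|\epsilon|=1}$ with the infinite sum $\sum_{n}$, and that interchange is not legitimate in general. I would therefore try to circumvent it — testing the hypothesis against a well-chosen sequence of rotations, truncating to finitely many coefficients and passing to a limit, or leaning on the single-term extremal structure of the functions realizing equality in \eqref{eq3.1}--\eqref{eq3.2}.

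I should caution, however, that $\sup_{|\epsilon|=1}\sum_{n}w(n)|a_{n}+\epsilon b_{n}|$ is in general \emph{strictly} smaller than $\sum_{n}w(n)(|a_{n}|+|b_{n}|)$ whenever the phases $\arg a_{n}-\arg b_{n}$ are not all equal; for example, a two-term map with $a_{2}=b_{2}$ and $a_{3}=-b_{3}$ satisfies the "for all $\epsilon$" bound while its weighted coefficient sum exceeds $1$. Consequently some genuine additional input is needed for "$\Rightarrow$", and pinning down precisely what rescues the necessity — or, failing that, isolating the hypotheses under which the claimed equality (rather than a mere inclusion) holds — is the crux of the argument.
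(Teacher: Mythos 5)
Your ``$\Leftarrow$'' direction coincides with the paper's argument for that inclusion (triangle inequality, uniformly in $\epsilon$). For the ``$\Rightarrow$'' direction, the paper's entire proof is the sentence ``on choosing $\epsilon=\epsilon(n)$ wisely we deduce that $f\in\mathcal{U}_{H}^{0}$'' --- i.e.\ it aligns $\epsilon b_{n}$ with $a_{n}$ separately for each $n$, letting $\epsilon$ depend on $n$. That is precisely the illegitimate interchange of $\sup_{|\epsilon|=1}$ with $\sum_{n}$ that you flag: the hypothesis gives $\sum_{n}w(n)|a_{n}+\epsilon b_{n}|\leq 1$ only one unimodular $\epsilon$ at a time, and no single $\epsilon$ realizes all the term-wise maxima unless the phases $\arg a_{n}-\arg b_{n}$ agree for every $n$. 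So you have not overlooked an idea that the paper supplies; the paper's proof of the necessity direction breaks at exactly the point you identify.

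Moreover, your cautionary example is correct and settles the matter: take $a_{2}=b_{2}=1/(4\sqrt{2})$, $a_{3}=-b_{3}=1/(6\sqrt{2})$ and all other coefficients zero. Writing $\epsilon=e^{i\theta}$, one computes $2|a_{2}||1+\epsilon|+3|a_{3}||1-\epsilon|=\tfrac{1}{\sqrt{2}}\left(|\cos(\theta/2)|+|\sin(\theta/2)|\right)\leq 1$ for every $\theta$, so $h+\epsilon g\in\mathcal{U}$ for all $|\epsilon|=1$ and $f=h+\bar{g}$ lies in the harmonic analogue of $\mathcal{U}$ in the sense of Definition \ref{def}; yet $\sum_{n}n(|a_{n}|+|b_{n}|)=\sqrt{2}>1$, so $f\notin\mathcal{U}_{H}^{0}$ as defined in the statement. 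Hence only the inclusion $\mathcal{U}_{H}^{0}\subset\{f:h+\epsilon g\in\mathcal{U}\ \text{for all}\ |\epsilon|=1\}$ survives (and likewise for $\mathcal{V}$); the asserted equality is false, and no additional input can rescue the necessity. Your proposal is therefore not a complete proof of the theorem as stated, but the obstruction you isolate is genuine and resides in the theorem and the paper's proof, not in your argument.
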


\begin{proof}
Suppose that $\mathcal{U}\triangleright \mathcal{G}_{H}^{0}$. If $f=h+\bar{g} \in \mathcal{G}_{H}^{0}$ where $h$ and $g$ are given by \eqref{eq1.1}, then $h+\epsilon g \in \mathcal{U}$ for each $|\epsilon|=1$ so that
\[\sum_{n=2}^{\infty} n|a_n +\epsilon b_n|\leq 1.\]
On choosing $\epsilon=\epsilon (n)$ wisely we deduce that $f \in \mathcal{U}_{H}^{0}$. Conversely if $f=h+\bar{g} \in \mathcal{U}_{H}^{0}$ where $h$ and $g$ are given by \eqref{eq1.1}, then for $|\epsilon|=1$ we have
\[\sum_{n=2}^{\infty} n|a_n +\epsilon b_n|\leq \sum_{n=2}^{\infty}n(|a_n|+|b_n|)\leq 1\]
so that $h+\epsilon g \in \mathcal{U}$ and hence $f\in \mathcal{G}_{H}^{0}$. Thus $\mathcal{U}\triangleright \mathcal{U}_{H}^{0}$. Similarly it can be shown that $\mathcal{V}\triangleright \mathcal{V}_{H}^{0}$.
\end{proof}

In view of Theorem \ref{th2.2}$(iv)$, $\mathcal{U}_{H}^{0} \subset \mathcal{SS}_{H}^{*0}$ and $\mathcal{V}_{H}^{0} \subset \mathcal{SK}_{H}^{0}$. In particular, the members of $\mathcal{U}_{H}^{0}$ (resp. $\mathcal{V}_{H}^{0}$) are fully starlike (resp. fully convex) in $\mathbb{D}$ by Corollary \ref{cor2.4}. Also since $\mathcal{U}\subset \mathcal{R}$ and $\mathcal{V}\subset \mathcal{W}$, therefore $\mathcal{U}_{H}^{0}\subset \mathcal{R}_{H}^{0}$ (see also \cite{ponnusamy}) and $\mathcal{V}_{H}^{0}\subset \mathcal{W}_{H}^{0}$. Using the results of \cite{silverman1} and applying the theorems of Section \ref{sec2}, we have

\begin{corollary}\label{cor3.8}
Let $f=h+\bar{g} \in \mathcal{S}_{H}^{0}$ where $h$ and $g$ are given by \eqref{eq1.1}.
\begin{itemize}
  \item [$(a)$] If $f \in \mathcal{U}_{H}^{0}$, then
  \[|a_n|\leq 1/n,\quad|b_n|\leq 1/n\quad \mbox{and}\quad ||a_n|-|b_n||\leq 1/n \quad \mbox{for} \mbox{ n}=2,3,\ldots.\]
  Equality occurs for the functions $z+z^2/2$ and $z+\bar{z}^{2}/2$. If $f \in \mathcal{V}_{H}^{0}$, then the sharp inequalities
  \[|a_n|\leq 1/n^2,\quad|b_n|\leq 1/n^2\quad \mbox{and}\quad ||a_n|-|b_n||\leq 1/n^2 \quad \mbox{for} \mbox{ n}=2,3,\ldots\]
  hold with the equality occurring for the functions $z+z^2/4$ and $z+\bar{z}^{2}/4$.
  \item [$(b)$] If $f \in \mathcal{U}_{H}^{0}$, then
  \[|z|-\frac{1}{2}|z|^2 \leq |f(z)|\leq |z|+\frac{1}{2}|z|^2,\quad z \in \mathbb{D}.\]
  In particular, the range $f(\mathbb{D})$ contains the disc $|w|<1/2$. If $f \in \mathcal{V}_{H}^{0}$, then
  \[|z|-\frac{1}{4}|z|^2 \leq |f(z)|\leq |z|+\frac{1}{4}|z|^2,\quad z \in \mathbb{D},\]
  and therefore $f(\mathbb{D})$ contains the disk $|w|<3/4$.
  \item [$(c)$] The classes $\mathcal{U}_{H}^{0}$ and $\mathcal{V}_{H}^{0}$ are compact with respect to the topology of locally uniform convergence.
  \item [$(d)$] $r_{S}(\mathcal{U}_{H}^{0})=r_{CC}(\mathcal{U}_{H}^{0})=r_{S}(\mathcal{V}_{H}^{0})=r_{C}(\mathcal{V}_{H}^{0})=r_{CC}(\mathcal{V}_{H}^{0})=1$ and $r_{C}(\mathcal{U}_{H}^{0})=1/2$.
  \item [$(e)$] The classes $\mathcal{U}_{H}^{0}$ and $\mathcal{V}_{H}^{0}$ are closed under convex combinations.
\end{itemize}
\end{corollary}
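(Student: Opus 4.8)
The plan is to treat Corollary \ref{cor3.8} as a direct harvest of the transfer principles of Section \ref{sec2}, fed by the classical coefficient and distortion data for the analytic families $\mathcal{U}$ and $\mathcal{V}$ recorded in \cite{silverman1}. Throughout I would use that $\mathcal{U}\triangleright\mathcal{U}_{H}^{0}$ and $\mathcal{V}\triangleright\mathcal{V}_{H}^{0}$ (Theorem \ref{th3.7}), together with the inclusions $\mathcal{U}\subset\mathcal{S}^{*}$ and $\mathcal{V}\subset\mathcal{K}$ already noted before the statement. The only ingredient that is not purely formal is the radius of convexity of $\mathcal{U}$, so I would flag that at the outset and borrow it from \cite{silverman1}.

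For part $(a)$ I would first read off the analytic coefficient bounds from the defining inequalities: if $f\in\mathcal{U}$ then $n|a_n|\le\sum_{k\ge 2}k|a_k|\le 1$, so $|a_n|\le 1/n$, and likewise $|a_n|\le 1/n^2$ for $f\in\mathcal{V}$. Thus $\mathcal{U}$ and $\mathcal{V}$ satisfy the hypothesis of Theorem \ref{th2.5} with $p(n)=1/n$ and $p(n)=1/n^2$ respectively, which immediately yields $||a_n|-|b_n||\le 1/n$ and $||a_n|-|b_n||\le 1/n^2$. The individual bounds $|a_n|,|b_n|\le 1/n$ (resp. $1/n^2$) follow at once from the coefficient descriptions of $\mathcal{U}_{H}^{0}$ and $\mathcal{V}_{H}^{0}$ in Theorem \ref{th3.7}, since $n(|a_n|+|b_n|)\le 1$ (resp. $n^2(|a_n|+|b_n|)\le 1$). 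Sharpness is witnessed by the stated extremal maps $z+z^2/2$, $z+\bar z^2/2$ and $z+z^2/4$, $z+\bar z^2/4$.

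For part $(b)$ the step is to produce the derivative envelope required by Theorem \ref{th2.6}. For $f\in\mathcal{U}$, estimating $f'(z)=1+\sum_{n\ge 2}na_nz^{n-1}$ and using $|z|^{n-1}\le|z|$ together with $\sum_{n\ge 2}n|a_n|\le 1$ gives $1-|z|\le|f'(z)|\le 1+|z|$; for $f\in\mathcal{V}$ the same computation with $\sum_{n\ge2}n|a_n|\le\tfrac12\sum_{n\ge2}n^2|a_n|\le\tfrac12$ gives $1-|z|/2\le|f'(z)|\le 1+|z|/2$. Integrating these envelopes as in Theorem \ref{th2.6} produces $|z|-\tfrac12|z|^2\le|f(z)|\le|z|+\tfrac12|z|^2$ for $\mathcal{U}_{H}^{0}$ and $|z|-\tfrac14|z|^2\le|f(z)|\le|z|+\tfrac14|z|^2$ for $\mathcal{V}_{H}^{0}$, and letting $|z|\to1$ in the lower bounds gives the covering radii $1/2$ and $3/4$.

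The remaining parts are structural. For $(c)$ I would note that $\mathcal{U}$ and $\mathcal{V}$ are compact (the coefficient inequalities persist under locally uniform convergence and the families are locally bounded), so Theorem \ref{th2.7} transfers compactness to $\mathcal{U}_{H}^{0}$ and $\mathcal{V}_{H}^{0}$. For $(e)$, convexity of the defining inequalities makes $\mathcal{U}$ and $\mathcal{V}$ closed under convex combinations, whence Theorem \ref{th2.12} transfers this to the harmonic analogues. For $(d)$, Theorem \ref{th2.8} equates each radius for $\mathcal{G}$ and $\mathcal{G}_{H}^{0}$: since $\mathcal{U}\subset\mathcal{S}^{*}$ the class $\mathcal{U}$ is starlike, hence close-to-convex, throughout $\mathbb{D}$, giving $r_{S}(\mathcal{U}_{H}^{0})=r_{CC}(\mathcal{U}_{H}^{0})=1$, and since $\mathcal{V}\subset\mathcal{K}$ all three radii for $\mathcal{V}_{H}^{0}$ equal $1$. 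The one value that is not free, $r_{C}(\mathcal{U}_{H}^{0})=1/2$, I would import from the sharp radius of convexity of $\mathcal{U}$ in \cite{silverman1}; this computation is the sole genuine obstacle, the rest of the corollary being bookkeeping over the machinery of Section \ref{sec2}.
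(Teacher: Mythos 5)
Your proposal is correct and follows essentially the same route as the paper, which proves the corollary by citing the classical facts about $\mathcal{U}$ and $\mathcal{V}$ from \cite{silverman1} and then invoking the transfer machinery of Section \ref{sec2} (Theorems \ref{th2.5}--\ref{th2.8} and \ref{th2.12}) exactly as you do. The only difference is cosmetic: you derive the coefficient and derivative envelopes for $\mathcal{U}$ and $\mathcal{V}$ directly from the defining series inequalities rather than quoting them, which makes the argument slightly more self-contained but changes nothing in substance.
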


Avci and Zlotkiewicz \cite{avci} independently investigated certain properties of the classes $\mathcal{U}_{H}^{0}$ and $\mathcal{V}_{H}^{0}$ (see also \cite{silverman2}). The next theorem investigates the convolution properties of the classes $\mathcal{U}_{H}^{0}$ and $\mathcal{V}_{H}^{0}$.

\begin{theorem}\label{th3.9}
The classes $\mathcal{U}_{H}^{0}$ and $\mathcal{V}_{H}^{0}$ are closed under convolutions. Moreover, we have
\begin{itemize}
  \item [$(i)$] $\mathcal{U}_{H}^{0}*\mathcal{U}_{H}^{0}\subset \mathcal{SK}_{H}^{0}$;
  \item [$(ii)$] If $\varphi \in \mathcal{K}$ and $f \in \mathcal{U}_{H}^{0}$, then $\varphi\tilde{*}f  \in \mathcal{U}_{H}^{0}$;
  \item [$(iii)$] If $\varphi \in \mathcal{K}$ and $f \in \mathcal{V}_{H}^{0}$, then $\varphi\tilde{*}f  \in \mathcal{V}_{H}^{0}$.
\end{itemize}
\end{theorem}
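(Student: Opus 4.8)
The plan is to reduce everything to the coefficient inequalities that define $\mathcal{U}_{H}^{0}$ and $\mathcal{V}_{H}^{0}$ in Theorem~\ref{th3.7}. Write two harmonic functions as $f=h+\bar g$ and $F=H+\bar G$ with $h(z)=z+\sum a_n z^n$, $g(z)=\sum b_n z^n$, $H(z)=z+\sum A_n z^n$, $G(z)=\sum B_n z^n$, so that their harmonic convolution is $f*F=(h*H)+\overline{g*G}=\bigl(z+\sum a_nA_n z^n\bigr)+\overline{\sum b_nB_n z^n}$. First I would establish closure under convolution. If $f,F\in\mathcal{U}_{H}^{0}$, then $n|A_n|\le\sum_{k\ge2}k(|A_k|+|B_k|)\le1$ and likewise $n|B_n|\le1$ force $|A_n|\le 1/n$ and $|B_n|\le 1/n$ for every $n\ge 2$; hence $n|a_nA_n|\le|a_n|$ and $n|b_nB_n|\le|b_n|$, and since $|a_n|+|b_n|\le\tfrac{1}{2}n(|a_n|+|b_n|)$ for $n\ge2$ we obtain $\sum n(|a_nA_n|+|b_nB_n|)\le\sum(|a_n|+|b_n|)\le\tfrac{1}{2}<1$, so $f*F\in\mathcal{U}_{H}^{0}$. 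The identical argument with $n^2$ in place of $n$ (using $|A_n|\le 1/n^2$ and $1/n^2\le 1/4$) shows $\mathcal{V}_{H}^{0}$ is closed under convolution.

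For part $(i)$ I would prove the stronger inclusion $\mathcal{U}_{H}^{0}*\mathcal{U}_{H}^{0}\subset\mathcal{V}_{H}^{0}$ and then appeal to the already-noted containment $\mathcal{V}_{H}^{0}\subset\mathcal{SK}_{H}^{0}$. Indeed, for $f,F\in\mathcal{U}_{H}^{0}$ one has $n|A_n|\le1$ and $n|B_n|\le1$, so $n^2|a_nA_n|=(n|a_n|)(n|A_n|)\le n|a_n|$ and similarly $n^2|b_nB_n|\le n|b_n|$; summing gives $\sum n^2(|a_nA_n|+|b_nB_n|)\le\sum n(|a_n|+|b_n|)\le1$, which is exactly the defining inequality for $\mathcal{V}_{H}^{0}$.

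For $(ii)$ and $(iii)$ I would invoke Theorem~\ref{th2.11}: since $\mathcal{U}\triangleright\mathcal{U}_{H}^{0}$ and $\mathcal{V}\triangleright\mathcal{V}_{H}^{0}$ were proved in Theorem~\ref{th3.7}, it suffices to check that $\mathcal{U}$ and $\mathcal{V}$ are each closed under convolution with members of $\mathcal{K}$. Writing $\varphi(z)=z+\sum c_nz^n\in\mathcal{K}$ and using the classical coefficient bound $|c_n|\le1$ for convex functions, for $f=z+\sum a_nz^n\in\mathcal{U}$ we get $\sum n|c_na_n|\le\sum n|a_n|\le1$, so $\varphi*f\in\mathcal{U}$; the same estimate with $n^2$ gives $\mathcal{K}*\mathcal{V}\subset\mathcal{V}$. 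Theorem~\ref{th2.11} then yields $\varphi\tilde{*}f\in\mathcal{U}_{H}^{0}$ and $\varphi\tilde{*}f\in\mathcal{V}_{H}^{0}$ respectively.

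None of the steps presents a serious obstacle; the argument is essentially bookkeeping with the two majorant series. The only points requiring care are the choice to prove $(i)$ through the intermediate class $\mathcal{V}_{H}^{0}$ rather than verifying stable convexity directly, and the use of the sharp convex coefficient estimate $|c_n|\le1$, which is precisely what makes convolution with a member of $\mathcal{K}$ non-increasing on the weighted coefficient sums defining $\mathcal{U}$ and $\mathcal{V}$.
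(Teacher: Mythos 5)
Your proof is correct, but for the closure statements it takes a genuinely different route from the paper. You verify the defining coefficient inequality of $\mathcal{U}_{H}^{0}$ (resp.\ $\mathcal{V}_{H}^{0}$) directly on the harmonic convolution, using the termwise bounds $n|A_n|\leq 1$ (resp.\ $n^{2}|A_n|\leq 1$) to majorize $\sum n(|a_nA_n|+|b_nB_n|)$; the same device gives you $(i)$ in the sharper form $\mathcal{U}_{H}^{0}*\mathcal{U}_{H}^{0}\subset\mathcal{V}_{H}^{0}\subset\mathcal{SK}_{H}^{0}$. The paper instead stays at the analytic level: it proves $\mathcal{V}*\mathcal{V}\subset\mathcal{V}$ and $\mathcal{U}*\mathcal{U}\subset\mathcal{V}\subset\mathcal{U}$ via the AM--GM estimate $\sum n^{2}|a_nA_n|\leq\tfrac12\sum n^{2}|a_n|^{2}+\tfrac12\sum n^{2}|A_n|^{2}\leq 1$, and then lifts these to the harmonic classes through the general machinery of Section~\ref{sec2} (Corollary~\ref{cor2.10} and Lemma~\ref{lem3.1}$(b)$), which requires the additional observation that $\mathcal{U}$ and $\mathcal{V}$ are convex sets. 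Your argument is more elementary and self-contained (no convexity of the classes, no AM--GM, no appeal to the lifting lemmas), at the cost of not exhibiting the underlying analytic inclusions $\mathcal{U}*\mathcal{U}\subset\mathcal{V}$ and $\mathcal{V}*\mathcal{V}\subset\mathcal{V}$ that the paper's framework is designed to exploit. For $(ii)$ and $(iii)$ both proofs go through Theorem~\ref{th2.11}; you in fact supply the justification of $\mathcal{K}*\mathcal{U}\subset\mathcal{U}$ and $\mathcal{K}*\mathcal{V}\subset\mathcal{V}$ via the classical bound $|c_n|\leq 1$ for convex functions, a step the paper asserts without proof.
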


\begin{proof}
The main crux of the proof relies on the observation that if $f \in \mathcal{V}$ is given by \eqref{eq2.1}, then $\sum_{n=2}^{\infty} n^2 |a_n|^2\leq 1$. Since $\mathcal{U}$ and $\mathcal{V}$ are convex sets therefore it suffices to show that the classes $\mathcal{U}$ and $\mathcal{V}$ are closed under convolution in view of Corollary \ref{cor2.10}. Let $f$, $F \in \mathcal{V}$ be given by \eqref{eq2.1}. Then
\[\sum_{n=2}^{\infty}n^2 |a_n A_n|\leq \frac{1}{2}\sum_{n=2}^{\infty} n^2 |a_n|^2+\frac{1}{2}\sum_{n=2}^{\infty} n^2 |A_n|^2\leq 1\]
using the fact that the geometric mean is less than or equal to the arithmetic mean. This shows that $f *F \in \mathcal{V}$. The same calculation shows that if $f$, $F \in \mathcal{U}$, then $f *F \in \mathcal{V} \subset \mathcal{U}$.

The proof of part $(i)$ follows by Lemma \ref{lem3.1}$(b)$ since $\mathcal{U}*\mathcal{U}\subset \mathcal{V}$, $\mathcal{V}$ is a convex set and $\mathcal{V}\subset \mathcal{K}$. Since the classes $\mathcal{U}$ and $\mathcal{V}$ are closed under convolution with convex functions, $(ii)$ and $(iii)$ follows immediately from Theorem \ref{th2.11}.
\end{proof}

Let $\mathcal{S}_{\mathbb{R}}$ be the subclass of $\mathcal{S}$ consisting of functions $f$ of the form \eqref{eq2.1} whose coefficients $a_n$ are all real. We close this section by determining its harmonic analogue.

\begin{theorem}\label{th3.10}
$\mathcal{S}_{\mathbb{R}}$ is the harmonic analogue of $\mathcal{S}_{\mathbb{R}}$ itself.
\end{theorem}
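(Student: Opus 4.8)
The plan is to let $\mathcal{G}_{H}^{0}$ denote the harmonic analogue of $\mathcal{S}_{\mathbb{R}}$, so that $\mathcal{S}_{\mathbb{R}} \triangleright \mathcal{G}_{H}^{0}$, and to prove the two inclusions $\mathcal{S}_{\mathbb{R}} \subset \mathcal{G}_{H}^{0}$ and $\mathcal{G}_{H}^{0} \subset \mathcal{S}_{\mathbb{R}}$, which together give $\mathcal{G}_{H}^{0} = \mathcal{S}_{\mathbb{R}}$. The first inclusion is immediate from the general observation recorded after Definition \ref{def} that $\mathcal{G} \subset \mathcal{G}_{H}^{0}$ for every subfamily $\mathcal{G} \subset \mathcal{S}$; thus each $f \in \mathcal{S}_{\mathbb{R}}$, viewed as a harmonic mapping with vanishing co-analytic part, already lies in its own harmonic analogue.

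The substance is the reverse inclusion, and the guiding idea is that requiring real Taylor coefficients after twisting by every unimodular factor is rigid enough to annihilate the co-analytic part entirely. Concretely, I would take $f = h + \bar{g} \in \mathcal{G}_{H}^{0}$ with $h$ and $g$ given by \eqref{eq1.1}, say $h(z) = z + \sum_{n \geq 2} A_n z^n$ and $g(z) = \sum_{n \geq 2} B_n z^n$. By Definition \ref{def}, for each $|\epsilon| = 1$ the analytic function $h + \epsilon g = z + \sum_{n \geq 2}(A_n + \epsilon B_n) z^n$ belongs to $\mathcal{S}_{\mathbb{R}}$, so that $A_n + \epsilon B_n \in \mathbb{R}$ for every $n \geq 2$ and every unimodular $\epsilon$.

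The main step is then to exploit three judicious choices of $\epsilon$. Taking $\epsilon = 1$ and $\epsilon = -1$ and adding (respectively subtracting) the two reality conditions yields $2A_n \in \mathbb{R}$ and $2B_n \in \mathbb{R}$, so $A_n, B_n \in \mathbb{R}$ for all $n \geq 2$. Feeding this back with the choice $\epsilon = i$, the condition $A_n + i B_n \in \mathbb{R}$ forces $\IM(A_n + i B_n) = B_n = 0$ for every $n \geq 2$, whence $g \equiv 0$. Consequently $f = h$ is analytic, and since $h = h + 1 \cdot g \in \mathcal{S}_{\mathbb{R}}$ by hypothesis, we conclude $f \in \mathcal{S}_{\mathbb{R}}$, completing the proof.

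I do not anticipate a genuine obstacle here: the whole argument reduces to a short coefficient computation. The only point demanding care is that membership in $\mathcal{S}_{\mathbb{R}}$ encodes both a univalence requirement, which transfers automatically from the functions $h + \epsilon g$, and a reality-of-coefficients requirement, which is the engine driving $g \equiv 0$. The conceptual content worth emphasizing is that, unlike the classes $\mathcal{R}_{H}^{0}$, $\mathcal{W}_{H}^{0}$, $\mathcal{U}_{H}^{0}$ and $\mathcal{V}_{H}^{0}$ treated earlier, the real-coefficient constraint is simply not compatible with the unimodular rotations built into Definition \ref{def} unless the co-analytic part vanishes; hence the harmonic analogue of $\mathcal{S}_{\mathbb{R}}$ cannot enlarge it at all.
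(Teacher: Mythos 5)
Your argument is correct and follows the same route as the paper: both prove $\mathcal{S}_{\mathbb{R}}\subset\mathcal{G}_{H}^{0}$ via the general inclusion $\mathcal{G}\subset\mathcal{G}_{H}^{0}$ and then show the reverse inclusion by observing that $a_n+\epsilon b_n$ must be real for every unimodular $\epsilon$, which forces $b_n=0$. The only difference is that you spell out the choices $\epsilon=1,-1,i$ where the paper simply asserts the conclusion; your version is a welcome elaboration, not a different proof.
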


\begin{proof}
Suppose that $\mathcal{S}_{\mathbb{R}} \triangleright \mathcal{G}_{H}^{0}$. Then $\mathcal{S}_{\mathbb{R}} \subset \mathcal{G}_{H}^{0}$. To prove the reverse inclusion, let $f=h+\bar{g}\in \mathcal{G}_{H}^{0}$ where $h$ and $g$ are given by \eqref{eq1.1}. Then $h+\epsilon g \in \mathcal{S}_{\mathbb{R}}$ for each $|\epsilon|=1$ which imply that all the coefficients $a_n+\epsilon b_n$ are real for each $|\epsilon|=1$. But this is possible only if $a_n$ are real and $b_n=0$ for $n=2,3,\ldots$. Thus $g \equiv 0$ and $f \in \mathcal{S}_{\mathbb{R}}$. Hence $\mathcal{S}_{\mathbb{R}}\triangleright \mathcal{S}_{\mathbb{R}}$.
\end{proof}


\section{Harmonic Alexander operator and Radius problems}
In this section, we will introduce the notion of harmonic Alexander operator and discuss some of its properties. We will also determine the radius of convexity for certain families of harmonic functions.

\begin{definition}\label{def4.1}
Define an integral operator $\Lambda_{H}^{+}:\mathcal{H}\rightarrow\mathcal{H}$ by
\[\Lambda_{H}^{+}[f]=\Lambda[h]+\overline{\Lambda[g]},\quad f=h+\bar{g}\in \mathcal{H},\]
where $\Lambda$ is the Alexander operator defined by \eqref{eq1.2}. We call $\Lambda_{H}^{+}$ the \textbf{positive harmonic Alexander operator}.
\end{definition}

\begin{figure}[here]
\centering
\includegraphics[width=3in]{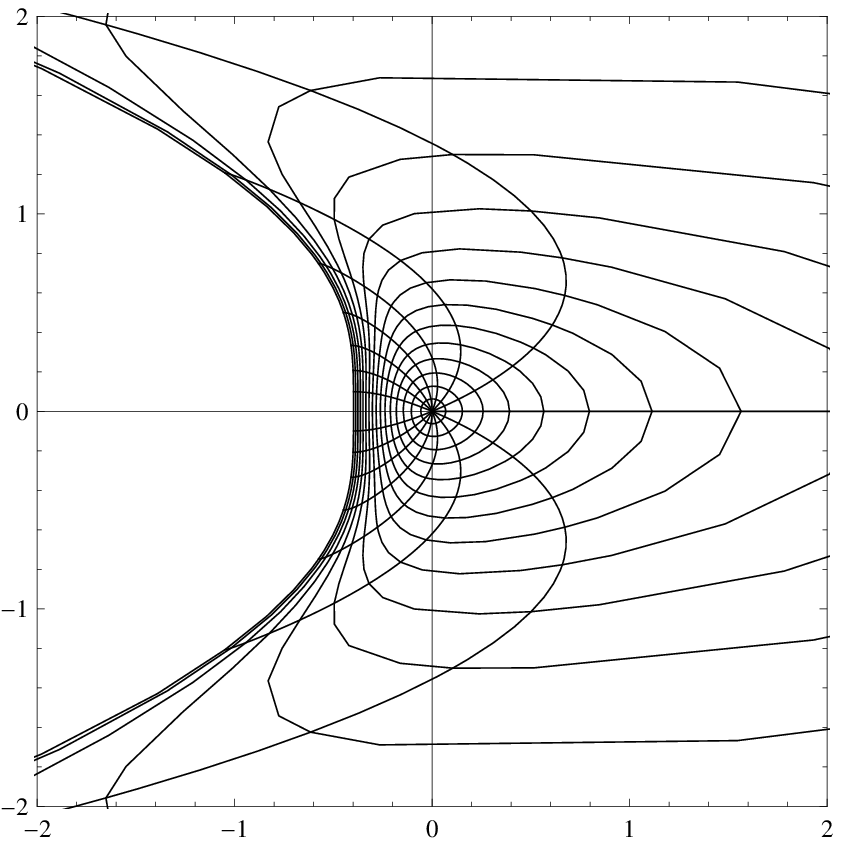}
\caption{Graph of the function $\Lambda_{H}^{+}[K]$.}\label{fig1}
\end{figure}

Since $\Lambda$ is linear therefore so is the operator $\Lambda_{H}^{+}$, that is,  $\Lambda_{H}^{+}[f_1+f_2]=\Lambda_{H}^{+}[f_1]+\Lambda_{H}^{+}[f_2]$ for all $f_1$, $f_2 \in \mathcal{H}$. The first theorem shows that if a subfamily $\mathcal{G} \subset \mathcal{S}$ is preserved under $\Lambda$, then its harmonic analogue $\mathcal{G}_{H}^{0}$ is preserved under $\Lambda_{H}^{+}$.

\begin{theorem}\label{th4.2}
Let $\mathcal{I}$ and $\mathcal{J}$ be subfamilies of $\mathcal{S}$ such that $\Lambda[\mathcal{I}]\subset \mathcal{J}$. Then $\Lambda_{H}^{+}[\mathcal{I}_{H}^{0}]\subset \mathcal{J}_{H}^{0}$ where $\mathcal{I}\triangleright\mathcal{I}_{H}^{0}$ and $\mathcal{J}\triangleright \mathcal{J}_{H}^{0}$.
\end{theorem}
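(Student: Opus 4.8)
The plan is to exploit the linearity of the Alexander operator $\Lambda$ together with Definition \ref{def}. The key algebraic identity driving the whole argument is that for analytic $h$, $g$ and any scalar $\epsilon$,
\[\Lambda[h]+\epsilon\,\Lambda[g]=\Lambda[h+\epsilon g],\]
which is immediate from the integral formula \eqref{eq1.2}. This identity says that the parametrized family $\{h+\epsilon g:|\epsilon|=1\}$ attached to $f$ is carried by $\Lambda$ exactly onto the family $\{\Lambda[h]+\epsilon\,\Lambda[g]:|\epsilon|=1\}$ attached to $\Lambda_{H}^{+}[f]$.

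First I would fix an arbitrary $f=h+\bar{g}\in\mathcal{I}_{H}^{0}$. Since $\mathcal{I}\triangleright\mathcal{I}_{H}^{0}$, Definition \ref{def} tells us that the analytic functions $h+\epsilon g$ lie in $\mathcal{I}$ for every $|\epsilon|=1$. Applying the hypothesis $\Lambda[\mathcal{I}]\subset\mathcal{J}$ then gives $\Lambda[h+\epsilon g]\in\mathcal{J}$ for every $|\epsilon|=1$.

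Next I would rewrite this membership using the displayed identity to obtain $\Lambda[h]+\epsilon\,\Lambda[g]\in\mathcal{J}$ for each $|\epsilon|=1$. Invoking Definition \ref{def} once more, now for $\mathcal{J}\triangleright\mathcal{J}_{H}^{0}$, this is precisely the statement that the harmonic function $\Lambda[h]+\overline{\Lambda[g]}$ belongs to $\mathcal{J}_{H}^{0}$. Since $\Lambda_{H}^{+}[f]=\Lambda[h]+\overline{\Lambda[g]}$ by Definition \ref{def4.1}, the desired conclusion $\Lambda_{H}^{+}[f]\in\mathcal{J}_{H}^{0}$ follows, and as $f$ was arbitrary we obtain $\Lambda_{H}^{+}[\mathcal{I}_{H}^{0}]\subset\mathcal{J}_{H}^{0}$.

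I anticipate essentially no obstacle: the argument is a one-line transport of the defining condition through the linear operator $\Lambda$. The only point that merits even a passing remark is that the normalization required for membership in $\mathcal{J}\subset\mathcal{S}$ need not be checked separately, since it is already encoded in the membership $\Lambda[h+\epsilon g]\in\mathcal{J}$; thus the harmonic-analogue structure is preserved without any auxiliary computation.
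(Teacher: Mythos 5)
Your proposal is correct and is essentially identical to the paper's own proof: both arguments fix $f=h+\bar{g}\in\mathcal{I}_{H}^{0}$, use linearity to write $\Lambda[h]+\epsilon\Lambda[g]=\Lambda[h+\epsilon g]\in\Lambda[\mathcal{I}]\subset\mathcal{J}$ for each $|\epsilon|=1$, and conclude via Definition \ref{def} that $\Lambda_{H}^{+}[f]\in\mathcal{J}_{H}^{0}$. No gaps.
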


\begin{proof}
Let  $f=h+\bar{g}\in \mathcal{I}_{H}^{0}$. Since $\Lambda_{H}^{+}[f]=\Lambda[h]+\overline{\Lambda[g]}$ and $\mathcal{J}\triangleright \mathcal{J}_{H}^{0}$, it suffices to show that $\Lambda[h]+\epsilon \Lambda[g] \in \mathcal{J}$ for each $|\epsilon|=1$. But $\Lambda[h]+\epsilon \Lambda[g]=\Lambda[h+\epsilon g]\in\Lambda[\mathcal{I}] \subset \mathcal{J}$ since $\mathcal{I}\triangleright\mathcal{I}_{H}^{0}$.
\end{proof}

Note that $\mathcal{R}_{H}^{0}\not\subset \mathcal{S}_{H}^{*0}$ and $\mathcal{U}_{H}^{0}\not\subset \mathcal{K}_{H}^{0}$. Since $\Lambda[\mathcal{R}]\subset \mathcal{W}\subset \mathcal{S}^{*}$ and $\Lambda[\mathcal{U}]\subset \mathcal{V}\subset \mathcal{K}$ Theorem \ref{th4.2} gives the following two corollaries.
\begin{corollary}\label{cor4.3}
$\Lambda_{H}^{+}[\mathcal{R}_{H}^{0}]\subset \mathcal{SS}_{H}^{*0}$ and $\Lambda_{H}^{+}[\mathcal{U}_{H}^{0}]\subset \mathcal{SK}_{H}^{0}$
\end{corollary}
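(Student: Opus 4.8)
The plan is to reduce Corollary~\ref{cor4.3} entirely to the machinery already assembled, so that essentially no new analytic work is required. The statement asserts two inclusions, $\Lambda_{H}^{+}[\mathcal{R}_{H}^{0}]\subset \mathcal{SS}_{H}^{*0}$ and $\Lambda_{H}^{+}[\mathcal{U}_{H}^{0}]\subset \mathcal{SK}_{H}^{0}$, and both are instances of the single containment mechanism packaged in Theorem~\ref{th4.2}. Recall that Theorem~\ref{th4.2} says: if $\mathcal{I}\triangleright\mathcal{I}_{H}^{0}$ and $\mathcal{J}\triangleright\mathcal{J}_{H}^{0}$ are subfamilies of $\mathcal{S}$ with $\Lambda[\mathcal{I}]\subset\mathcal{J}$, then $\Lambda_{H}^{+}[\mathcal{I}_{H}^{0}]\subset\mathcal{J}_{H}^{0}$. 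So the entire task is to exhibit, for each of the two cases, a suitable target family $\mathcal{J}\subset\mathcal{S}$ whose Alexander image absorbs the relevant domain family and whose harmonic analogue is contained in the asserted stable class.

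First I would handle the starlike case. The chain $\Lambda[\mathcal{R}]\subset\mathcal{W}\subset\mathcal{S}^{*}$ is already recorded in the paragraph immediately preceding the corollary, so I take $\mathcal{I}=\mathcal{R}$ and $\mathcal{J}=\mathcal{W}$. Since $\mathcal{R}\triangleright\mathcal{R}_{H}^{0}$ (Theorem~\ref{th3.3}) and $\mathcal{W}\triangleright\mathcal{W}_{H}^{0}$ (established in Section~\ref{sec3}), Theorem~\ref{th4.2} gives $\Lambda_{H}^{+}[\mathcal{R}_{H}^{0}]\subset\mathcal{W}_{H}^{0}$. To finish I invoke the inclusion $\mathcal{W}_{H}^{0}\subset\mathcal{SS}_{H}^{*0}$, which was noted in the discussion surrounding Theorem~\ref{th3.5} (it follows from $\mathcal{W}\subset\mathcal{S}^{*}$ together with Theorem~\ref{th2.2}$(iv)$). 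Composing these two inclusions yields the first assertion. The second case is structurally identical: I take $\mathcal{I}=\mathcal{U}$ and $\mathcal{J}=\mathcal{V}$, using the chain $\Lambda[\mathcal{U}]\subset\mathcal{V}\subset\mathcal{K}$ from the same preceding paragraph, the facts $\mathcal{U}\triangleright\mathcal{U}_{H}^{0}$ and $\mathcal{V}\triangleright\mathcal{V}_{H}^{0}$ (Theorem~\ref{th3.7}), and the inclusion $\mathcal{V}_{H}^{0}\subset\mathcal{SK}_{H}^{0}$ (recorded after Theorem~\ref{th3.7} via Theorem~\ref{th2.2}$(iv)$ applied to $\mathcal{V}\subset\mathcal{K}$). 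Theorem~\ref{th4.2} then delivers $\Lambda_{H}^{+}[\mathcal{U}_{H}^{0}]\subset\mathcal{V}_{H}^{0}\subset\mathcal{SK}_{H}^{0}$.

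There is no genuine obstacle here; the corollary is designed to be a one-line consequence of Theorem~\ref{th4.2}. The only point requiring mild care is verifying that each intermediate family $\mathcal{J}$ actually is the harmonic analogue of a subfamily of $\mathcal{S}$ in the precise sense of Definition~\ref{def}, i.e.\ that the relation $\mathcal{J}\triangleright\mathcal{J}_{H}^{0}$ has been legitimately established before this point for $\mathcal{J}\in\{\mathcal{W},\mathcal{V}\}$ — but this is exactly what Section~\ref{sec3} provides. The substantive analytic content, namely the two operator facts $\Lambda[\mathcal{R}]\subset\mathcal{W}$ and $\Lambda[\mathcal{U}]\subset\mathcal{V}$, has been deferred to (and can be read off directly from) the coefficient and derivative characterizations of these classes: for $\mathcal{R}\to\mathcal{W}$ one checks that $\Lambda[f](z)=\int_0^z f(t)/t\,dt$ satisfies $z\Lambda[f]'=f$, so $\RE\big(\Lambda[f]'+z\Lambda[f]''\big)=\RE f'>0$; for $\mathcal{U}\to\mathcal{V}$ the coefficient sum $\sum n|a_n|\le 1$ transforms into $\sum n^2|a_n/n|=\sum n|a_n|\le 1$ under $a_n\mapsto a_n/n$. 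Thus the proof is simply the assembly of these ingredients through Theorem~\ref{th4.2}.
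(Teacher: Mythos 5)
Your proposal is correct and follows essentially the same route as the paper: both are direct applications of Theorem~\ref{th4.2} using the chains $\Lambda[\mathcal{R}]\subset\mathcal{W}\subset\mathcal{S}^{*}$ and $\Lambda[\mathcal{U}]\subset\mathcal{V}\subset\mathcal{K}$ stated just before the corollary. The only (harmless) difference is that you take the intermediate target $\mathcal{J}=\mathcal{W}$ (resp.\ $\mathcal{V}$) and then compose with $\mathcal{W}_{H}^{0}\subset\mathcal{SS}_{H}^{*0}$ (resp.\ $\mathcal{V}_{H}^{0}\subset\mathcal{SK}_{H}^{0}$), whereas the paper applies Theorem~\ref{th4.2} directly with $\mathcal{J}=\mathcal{S}^{*}$ (resp.\ $\mathcal{K}$), whose harmonic analogues are the stable classes; your version in fact records the slightly sharper intermediate inclusions $\Lambda_{H}^{+}[\mathcal{R}_{H}^{0}]\subset\mathcal{W}_{H}^{0}$ and $\Lambda_{H}^{+}[\mathcal{U}_{H}^{0}]\subset\mathcal{V}_{H}^{0}$ of Corollary~\ref{cor4.4}.
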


\begin{corollary}\label{cor4.4}
The classes $\mathcal{R}_{H}^{0}$, $\mathcal{W}_{H}^{0}$, $\mathcal{U}_{H}^{0}$ and $\mathcal{V}_{H}^{0}$ are preserved under $\Lambda_{H}^{+}$.
\end{corollary}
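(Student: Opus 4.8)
The plan is to reduce everything to the single monotone principle provided by Theorem \ref{th4.2}. Observe that ``$\mathcal{G}_{H}^{0}$ is preserved under $\Lambda_{H}^{+}$'' means precisely $\Lambda_{H}^{+}[\mathcal{G}_{H}^{0}]\subset \mathcal{G}_{H}^{0}$, and Theorem \ref{th4.2} applied with $\mathcal{I}=\mathcal{J}=\mathcal{G}$ says this follows as soon as the underlying analytic family satisfies $\Lambda[\mathcal{G}]\subset \mathcal{G}$. So I would first record that each of $\mathcal{R}$, $\mathcal{W}$, $\mathcal{U}$, $\mathcal{V}$ is a subfamily of $\mathcal{S}$ and that each is the base of its own harmonic analogue (established in Theorems \ref{th3.3}, \ref{th3.7} and the discussion of $\mathcal{W}_{H}^{0}$), so that Theorem \ref{th4.2} is applicable. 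The whole corollary then collapses to verifying the four analytic self-inclusions $\Lambda[\mathcal{R}]\subset\mathcal{R}$, $\Lambda[\mathcal{W}]\subset\mathcal{W}$, $\Lambda[\mathcal{U}]\subset\mathcal{U}$ and $\Lambda[\mathcal{V}]\subset\mathcal{V}$.

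For the pair $\mathcal{R}$, $\mathcal{W}$, I would exploit the two facts already recorded just before Corollary \ref{cor4.3}, namely $\Lambda[\mathcal{R}]\subset\mathcal{W}$ together with Chichra's inclusion $\mathcal{W}\subset\mathcal{R}$. Chaining these gives $\Lambda[\mathcal{R}]\subset\mathcal{W}\subset\mathcal{R}$, which is the desired $\Lambda[\mathcal{R}]\subset\mathcal{R}$; and since $\Lambda$ is monotone under set inclusion, $\Lambda[\mathcal{W}]\subset\Lambda[\mathcal{R}]\subset\mathcal{W}$, which is $\Lambda[\mathcal{W}]\subset\mathcal{W}$. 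If one wished to make $\Lambda[\mathcal{R}]\subset\mathcal{W}$ self-contained, the one-line computation is that for $F=\Lambda[f]$ one has $zF'(z)=f(z)$, hence $F'(z)+zF''(z)=f'(z)$, so $\RE\,(F'(z)+zF''(z))=\RE f'(z)>0$ whenever $f\in\mathcal{R}$.

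For the pair $\mathcal{U}$, $\mathcal{V}$ I would run the identical bootstrapping, now using the coefficient inclusions $\Lambda[\mathcal{U}]\subset\mathcal{V}$ and $\mathcal{V}\subset\mathcal{U}$. The relevant arithmetic is that $\Lambda[f]$ has Taylor coefficients $a_n/n$, so the defining sum of $\mathcal{V}$ for $\Lambda[f]$ is $\sum_{n\ge 2} n^2\,|a_n/n|=\sum_{n\ge 2} n|a_n|$, which is exactly the $\mathcal{U}$-sum of $f$; thus $\Lambda[\mathcal{U}]\subset\mathcal{V}$, and $\mathcal{V}\subset\mathcal{U}$ follows from $n\le n^2$. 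Chaining gives $\Lambda[\mathcal{U}]\subset\mathcal{V}\subset\mathcal{U}$ and $\Lambda[\mathcal{V}]\subset\Lambda[\mathcal{U}]\subset\mathcal{V}$. Feeding the four inclusions back into Theorem \ref{th4.2} yields $\Lambda_{H}^{+}[\mathcal{G}_{H}^{0}]\subset\mathcal{G}_{H}^{0}$ in each case, completing the proof.

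I do not anticipate a genuine obstacle here: the substantive content, the inclusions $\Lambda[\mathcal{R}]\subset\mathcal{W}$ and $\Lambda[\mathcal{U}]\subset\mathcal{V}$, is already in hand, and the corollary is essentially a bookkeeping consequence of combining them with the elementary inclusions $\mathcal{W}\subset\mathcal{R}$ and $\mathcal{V}\subset\mathcal{U}$ through the monotonicity of $\Lambda$ and of the operator $\mathcal{G}\mapsto\mathcal{G}_{H}^{0}$ from Theorem \ref{th4.2}. The only point requiring a moment's care is simply to make sure that each class is quoted with the correct harmonic-analogue relation $\mathcal{G}\triangleright\mathcal{G}_{H}^{0}$ so that Theorem \ref{th4.2} genuinely applies.
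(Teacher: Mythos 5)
Your proposal is correct and follows the same route as the paper, which derives Corollary \ref{cor4.4} directly from Theorem \ref{th4.2} using the inclusions $\Lambda[\mathcal{R}]\subset\mathcal{W}\subset\mathcal{R}$ and $\Lambda[\mathcal{U}]\subset\mathcal{V}\subset\mathcal{U}$ noted just before Corollary \ref{cor4.3}. You simply make explicit the bookkeeping (taking $\mathcal{I}=\mathcal{J}=\mathcal{G}$ and using monotonicity of $\Lambda$) that the paper leaves to the reader.
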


The Alexander operator $\Lambda$ provides a one-to-one correspondence between the classes $\mathcal{S}^{*}$ and $\mathcal{K}$: $f \in \mathcal{S}^{*}$ if and only if $\Lambda[f] \in \mathcal{K}$. A similar result holds for the positive harmonic Alexander operator which provides a one-to-one correspondence between the classes $\mathcal{SS}_{H}^{*0}$ and $\mathcal{SK}_{H}^{0}$.
\begin{corollary}\label{cor4.5}
$f \in \mathcal{SS}_{H}^{*0}$ if and only if $\Lambda_{H}^{+}[f] \in \mathcal{SK}_{H}^{0}$.
\end{corollary}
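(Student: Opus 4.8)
The plan is to peel off the harmonic structure and reduce everything to the classical Alexander correspondence $f\in\mathcal{S}^{*}\iff\Lambda[f]\in\mathcal{K}$ applied to the analytic sections $h+\epsilon g$. The two ingredients I would invoke are the facts recorded earlier that $\mathcal{S}^{*}\triangleright\mathcal{SS}_{H}^{*0}$ and $\mathcal{K}\triangleright\mathcal{SK}_{H}^{0}$, together with the linearity of $\Lambda$, which gives $\Lambda[h+\epsilon g]=\Lambda[h]+\epsilon\Lambda[g]$ for every $\epsilon$.

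For the forward implication, write $f=h+\bar g$ and suppose $f\in\mathcal{SS}_{H}^{*0}$. By Definition \ref{def} this means $h+\epsilon g\in\mathcal{S}^{*}$ for each $|\epsilon|=1$; the classical correspondence then yields $\Lambda[h+\epsilon g]\in\mathcal{K}$, and linearity rewrites this as $\Lambda[h]+\epsilon\Lambda[g]\in\mathcal{K}$ for each $|\epsilon|=1$. Since $\mathcal{K}\triangleright\mathcal{SK}_{H}^{0}$, this is exactly the statement that $\Lambda[h]+\overline{\Lambda[g]}=\Lambda_{H}^{+}[f]$ lies in $\mathcal{SK}_{H}^{0}$.

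The reverse implication runs the same chain of equivalences backwards: assuming $\Lambda_{H}^{+}[f]\in\mathcal{SK}_{H}^{0}$ gives $\Lambda[h]+\epsilon\Lambda[g]=\Lambda[h+\epsilon g]\in\mathcal{K}$ for each $|\epsilon|=1$, whence $h+\epsilon g\in\mathcal{S}^{*}$ by the converse half of the classical correspondence, so $f\in\mathcal{SS}_{H}^{*0}$. The one point requiring care --- and the only genuine ingredient beyond bookkeeping --- is that the classical Alexander result is a true biconditional, so both directions are available; once that is in hand the harmonic statement follows purely formally from the definition of the harmonic analogue and the linearity of $\Lambda$. Indeed, the forward direction is also immediate from Theorem \ref{th4.2} applied with $\mathcal{I}=\mathcal{S}^{*}$ and $\mathcal{J}=\mathcal{K}$, since $\Lambda[\mathcal{S}^{*}]\subset\mathcal{K}$; the content of the corollary is precisely that this inclusion is reversible at the harmonic level, which is inherited section-by-section from its reversibility at the analytic level.
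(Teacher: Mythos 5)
Your proof is correct and follows exactly the route the paper intends: the forward direction is Theorem \ref{th4.2} with $\mathcal{I}=\mathcal{S}^{*}$, $\mathcal{J}=\mathcal{K}$, and the converse is obtained by running the classical Alexander biconditional section-by-section on $h+\epsilon g$ via the linearity of $\Lambda$ and the facts $\mathcal{S}^{*}\triangleright\mathcal{SS}_{H}^{*0}$, $\mathcal{K}\triangleright\mathcal{SK}_{H}^{0}$. The paper leaves this argument implicit, so your write-up simply supplies the details it omits.
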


However, the inclusion $\Lambda_{H}^{+}[\mathcal{S}_{H}^{*0}]\subset \mathcal{K}_{H}^{0}$ is not valid in general. To see this, note that the harmonic Koebe function $K$ given by \eqref{eq2.2} belongs to $\mathcal{S}_{H}^{*0}$ and
\[\Lambda_{H}^{+}[K](z)=\frac{1}{6}\left[\frac{z(5-3z)}{(1-z)^2}-\log (1-z)\right]+\overline{\frac{1}{6}\left[\frac{z(3z-1)}{(1-z)^2}-\log (1-z)\right]}.\]
The graph of the function $\Lambda_{H}^{+}[K]$ (see Figure \ref{fig1}) shows that the image domain is not even starlike. In particular, $\Lambda_{H}^{+}[\mathcal{S}_{H}^{*0}]\not\subset \mathcal{S}_{H}^{*0}$. Similarly, it can be shown that $\Lambda_{H}^{+}[\mathcal{K}_{H}^{0}]\not\subset \mathcal{K}_{H}^{0}$ by considering the harmonic half-plane mapping $L$ given by \eqref{eq2.3}. Note that
\begin{align*}
\Lambda_{H}^{+}[L](z)&=\frac{1}{2}\left[-\log(1-z)+\frac{z}{1-z}\right]+\overline{\frac{1}{2}\left[-\log(1-z)-\frac{z}{1-z}\right]}\\
                     &=-\log |1-z|+i \IM \left(\frac{z}{1-z}\right).
\end{align*}
Clearly Figure \ref{fig2} depicts that the image domain $\Lambda_{H}^{+}[L](\mathbb{D})$ is not convex. The failure of the implication $\Lambda_{H}^{+}[\mathcal{S}_{H}^{*0}]\subset \mathcal{K}_{H}^{0}$ motivates to introduce the following definition.

\begin{figure}[here]
\centering
\includegraphics[width=3in]{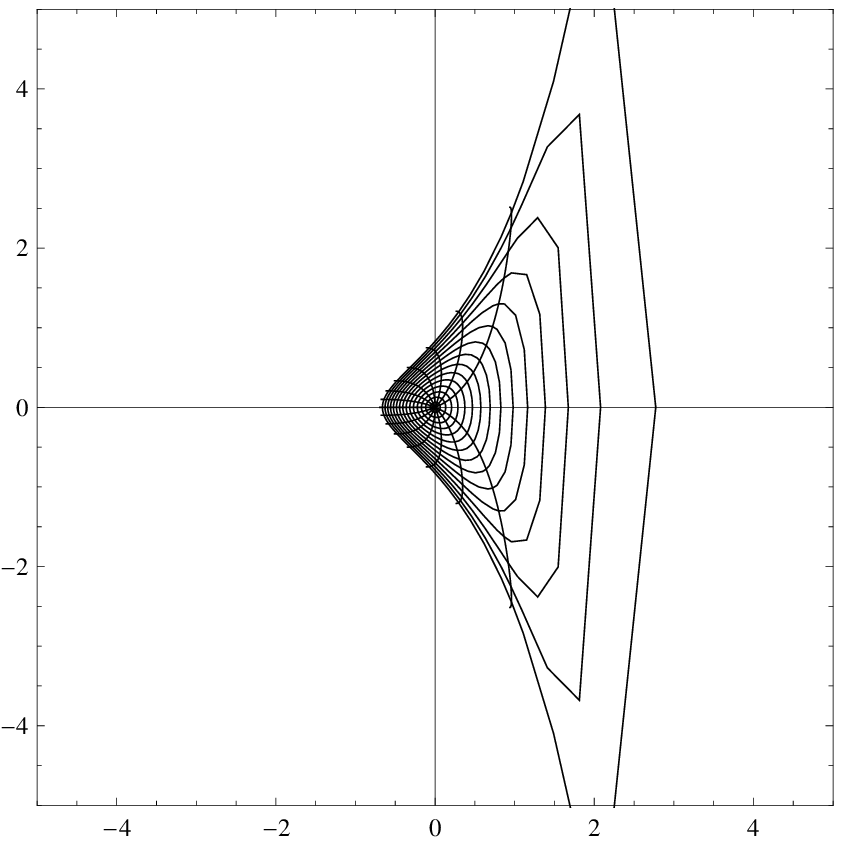}
\caption{Graph of the function $\Lambda_{H}^{+}[L]$.}\label{fig2}
\end{figure}

\begin{definition}\label{def4.6}
Define another integral operator $\Lambda_{H}^{-}:\mathcal{H}\rightarrow\mathcal{H}$ by
\[\Lambda_{H}^{-}[f]=\Lambda[h]-\overline{\Lambda[g]},\quad f=h+\bar{g}\in \mathcal{H},\]
where $\Lambda$ is given by \eqref{eq1.2}. We call $\Lambda_{H}^{-}$ the \textbf{negative harmonic Alexander operator}.
\end{definition}

By \cite[Lemma, p. 108]{duren} it follows that $\Lambda_{H}^{-}[\mathcal{S}_{H}^{*0}]\subset \mathcal{K}_{H}^{0}$. In particular, the classes $\mathcal{S}_{H}^{*0}$ and $\mathcal{K}_{H}^{0}$ are preserved under the operator $\Lambda_{H}^{-}$. It's worth to remark that Theorems \ref{th4.2} and its corollaries continue to hold for the negative harmonic Alexander operator $\Lambda_{H}^{-}$.

For $G \in \mathcal{A}$, consider the family
\[\mathcal{R}_{H}^{0}(G)=\left\{f=h+\bar{g}\in \mathcal{H}:\RE\frac{h'(z)}{G'(z)}>\left|\frac{g'(z)}{G'(z)}\right|\mbox{ for all } z\in \mathbb{D}\right\}.\]
If $G(z)=z$, then $\mathcal{R}_{H}^{0}(G)$ reduces to $\mathcal{R}_{H}^{0}$. In \cite{kaliraj}, it has been proved that if $G \in \mathcal{K}$, then $\mathcal{R}_{H}^{0}(G) \subset \mathcal{SC}_{H}^{0}$ (see also \cite{mocanu,mocanu1}). The next theorem determines the radius of convexity of the class $\mathcal{R}_{H}^{0}(G)$ for specific choices of the function $G$.

\begin{theorem}\label{th4.7}
Let $r_{C}$ denotes the radius of convexity of the class $\mathcal{R}_{H}^{0}(G)$ for $G \in \mathcal{A}$.
\begin{itemize}
  \item [$(i)$] If $G \in \mathcal{S}$, then $r_{C}=3-2\sqrt{2}$;
  \item [$(ii)$] If $G \in \mathcal{S}^{*}$, then $r_{C}=3-2\sqrt{2}$;
  \item [$(iii)$] If $G \in \mathcal{K}$, then $r_{C}=2-\sqrt{3}$;
  \item [$(iv)$] If $G \in \mathcal{R}$, then $r_{C}=\sqrt{5}-2$;
  \item [$(v)$] If $G \in \mathcal{A}$ with $\RE G'(z)>1/2$, then $r_{C}=3-2\sqrt{2}$.
\end{itemize}
Moreover, all these results are sharp.
\end{theorem}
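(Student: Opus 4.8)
The plan is to reduce the harmonic radius-of-convexity problem to a purely analytic one, exactly in the spirit of the proof of Theorem~\ref{th2.8}. First I would note that if $f=h+\bar g\in\mathcal{R}_{H}^{0}(G)$ then for every $|\epsilon|=1$ the analytic function $F_{\epsilon}:=h+\epsilon g$ satisfies $\RE\bigl(F_{\epsilon}'/G'\bigr)\ge\RE(h'/G')-|g'/G'|>0$, while taking $g\equiv 0$ shows conversely that the functions arising this way are precisely the members of the analytic family $\{F\in\mathcal{A}:\RE(F'(z)/G')>0\}$. By Lemma~\ref{lem}, applied on each disk $|z|<\rho$, the mapping $f$ is convex on $|z|<r$ if and only if every $F_{\epsilon}$ is convex there; hence the radius of convexity of $\mathcal{R}_{H}^{0}(G)$ coincides with that of $\{F:\RE(F'/G')>0\}$, and the whole question becomes analytic.

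For the analytic estimate I would write $F'=p\,G'$ with $p(0)=1$ and $\RE p>0$, so that $p$ is a Carath\'eodory function, and use the logarithmic-derivative identity
\[1+\frac{zF''(z)}{F'(z)}=\Bigl(1+\frac{zG''(z)}{G'(z)}\Bigr)+\frac{zp'(z)}{p(z)}.\]
The last summand is controlled by the sharp bound $\RE\bigl(zp'/p\bigr)\ge-2r/(1-r^{2})$, with equality for $p(z)=(1+z)/(1-z)$ at $z=-r$. Everything then reduces to finding, for each prescribed class $\mathcal{X}\ni G$, the sharp lower bound $A_{\mathcal{X}}(r):=\inf_{G\in\mathcal{X}}\inf_{|z|=r}\RE\bigl(1+zG''/G'\bigr)$; the radius of convexity is the smallest positive root of $A_{\mathcal{X}}(r)-2r/(1-r^{2})=0$.

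The quantities $A_{\mathcal{X}}(r)$ come from classical sharp distortion theorems. For $G\in\mathcal{S}$ I would invoke $\bigl|zG''/G'-2r^{2}/(1-r^{2})\bigr|\le 4r/(1-r^{2})$, giving $A_{\mathcal{S}}(r)=(1-4r+r^{2})/(1-r^{2})$ with the Koebe function extremal; for $G\in\mathcal{S}^{*}$ the same value follows by setting $q=zG'/G\in\mathcal{P}$ and noting $1+zG''/G'=q+zq'/q$. For $G\in\mathcal{K}$ one uses that $1+zG''/G'$ is itself a Carath\'eodory function, whence $A_{\mathcal{K}}(r)=(1-r)/(1+r)$ with the half-plane mapping extremal; for $G\in\mathcal{R}$ one has $G'\in\mathcal{P}$ directly, giving $A_{\mathcal{R}}(r)=(1-2r-r^{2})/(1-r^{2})$ with $G'=(1+z)/(1-z)$ extremal; and for $\RE G'>1/2$ I would exploit that $G'$ is subordinate to $1/(1-z)$ and extract the corresponding sharp estimate by a Schwarz--Pick extremal argument. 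Substituting each $A_{\mathcal{X}}(r)$ and solving the resulting quadratic in $r$ yields the claimed radii.

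Sharpness is obtained by exhibiting explicit analytic extremals (members with $g\equiv 0$): one pairs the extremal $G$ for $A_{\mathcal{X}}$ with $p(z)=(1+z)/(1-z)$ and checks that the minimum of $\RE(1+zG''/G')$ and the minimum of $\RE(zp'/p)$ are attained at the \emph{same} boundary point $z=-r$, so that the two lower bounds add exactly. I expect the main difficulty to lie in this fourth step---establishing the sharp bounds $A_{\mathcal{X}}(r)$ and verifying the alignment of the two extremal points. The univalent case is delicate because it needs the two-sided classical estimate rather than a one-sided Carath\'eodory bound, and the class $\RE G'>1/2$ is the hardest point, since $1+zG''/G'$ depends nonlinearly on $G'$ and the extremal cannot simply be read off from the extreme points of the convex family but must be located through a subordination/Schwarz--Pick analysis.
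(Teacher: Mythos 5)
Your reduction to the analytic problem --- pass to $F_{\epsilon}=h+\epsilon g$, note $\RE(F_{\epsilon}'/G')>0$, apply Lemma \ref{lem} on subdisks, and observe via $g\equiv 0$ that the analytic family $\{F:\RE(F'/G')>0\}$ sits inside the harmonic one --- is exactly the paper's argument; the paper then simply quotes Ratti's radius-of-convexity theorems for these analytic classes, whereas you re-derive them. Your derivations for $(i)$--$(iv)$ are sound: with $F'=pG'$ the identity $1+zF''/F'=(1+zG''/G')+zp'/p$, the bound $\RE(zp'/p)\geq -2r/(1-r^{2})$, and the values $A_{\mathcal{S}}(r)=A_{\mathcal{S}^{*}}(r)=(1-4r+r^{2})/(1-r^{2})$, $A_{\mathcal{K}}(r)=(1-r)/(1+r)$, $A_{\mathcal{R}}(r)=(1-2r-r^{2})/(1-r^{2})$ are all correct; the common extremal (Koebe, half-plane, or $G'=(1+z)/(1-z)$, paired with $p(z)=(1+z)/(1-z)$ at $z=-r$) does align, and the resulting quadratics give $3-2\sqrt{2}$, $2-\sqrt{3}$ and $\sqrt{5}-2$ as claimed.

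Part $(v)$, however, is a genuine gap, and it cannot be closed along the lines you sketch. Writing $G'=1/(1-\omega)$ with $\omega$ a Schwarz function, Schwarz--Pick gives $|zG''/G'|=|z\omega'/(1-\omega)|\leq r/(1-r)$, but equality forces $\omega$ to be a rotation with $\omega(z)$ positive on the relevant radius, at which point $z\omega'/(1-\omega)=r/(1-r)$ is \emph{positive} real; hence $\inf\RE(zG''/G')$ is strictly larger than $-r/(1-r)$, the two extremal problems do not decouple, and the ``alignment at $z=-r$'' that carries cases $(i)$--$(iv)$ fails precisely here. Worse, the stated value cannot be correct for the stated hypothesis: since $\RE G'>1/2$ implies $G\in\mathcal{R}$, the family in $(v)$ is contained in the family in $(iv)$, so its radius of convexity is at least $\sqrt{5}-2\approx 0.236$, which already exceeds $3-2\sqrt{2}\approx 0.172$. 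No computation along your lines (or any other) can therefore confirm $(v)$ as printed --- the hypothesis on $G$ in the source theorem being invoked must differ from $\RE G'(z)>1/2$ --- and this inconsistency should have been flagged rather than deferred as ``the hardest point.''
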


\begin{proof}
Let $f=h+\bar{g}\in \mathcal{R}_{H}^{0}(G)$. Setting $F_{\epsilon}=h+\epsilon g$ for $|\epsilon|=1$ note that
\[\RE \frac{F_{\epsilon}'(z)}{G'(z)}=\RE\left(\frac{h'(z)}{G'(z)}+\epsilon\frac{g'(z)}{G'(z)}\right)\geq \RE \frac{h'(z)}{G'(z)}-\left|\frac{g'(z)}{G'(z)}\right|>0, \quad z \in \mathbb{D}.\]
If $G \in \mathcal{S}$, then $F_{\epsilon}$ is convex in $|z|<3-2\sqrt{2}$ by \cite[Theorem 1, p. 32]{ratti1} for each $|\epsilon|=1$. By Lemma \ref{lem}, $f$ is convex in $|z|<3-2\sqrt{2}$. This proves $(i)$. The proof of the other parts is similar.
\end{proof}

For $G \in \mathcal{A}$, let $\mathcal{F}_{H}^{0}(G)$ be the subclass of $\mathcal{R}_{H}^{0}(G)$ defined by the set
\[\mathcal{F}_{H}^{0}(G)=\left\{f=h+\bar{g}\in \mathcal{H}:\left|\frac{h'(z)}{G'(z)}-1\right|<1-\left|\frac{g'(z)}{G'(z)}\right|\mbox{ for all } z\in \mathbb{D}\right\}.\]
If $G(z)=z$, then $\mathcal{F}_{H}^{0}(G)$ reduces to $\mathcal{F}_{H}^{0}$. Also, $\mathcal{F}_{H}^{0}(G)\subset \mathcal{SC}_{H}^{0}$ if $G \in \mathcal{K}$. The next theorem is similar to that of Theorem \ref{th4.7}.

\begin{theorem}\label{th4.8}
Suppose that $r_{C}$ denotes the radius of convexity of the class $\mathcal{F}_{H}^{0}(G)$ for $G \in \mathcal{A}$.
\begin{itemize}
  \item [$(a)$] If $G \in \mathcal{S}$, then $r_{C}=1/5$;
  \item [$(b)$] If $G \in \mathcal{S}^{*}$, then $r_{C}=1/5$;
  \item [$(c)$] If $G \in \mathcal{K}$, then $r_{C}=1/3$;
  \item [$(d)$] If $G \in \mathcal{R}$, then $r_{C}=(\sqrt{17}-3)/4$;
  \item [$(e)$] If $G \in \mathcal{A}$ with $\RE G'(z)>1/2$, then $r_{C}$ is the smallest positive root of the equation $r^4+2r^3+13r^2+4r-4=0$.
\end{itemize}
Moreover, all these results are sharp.
\end{theorem}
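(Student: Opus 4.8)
The plan is to imitate the proof of Theorem~\ref{th4.7}: reduce the harmonic radius-of-convexity problem to an analytic one through the associated family $F_\epsilon=h+\epsilon g$, and then read off the analytic radius from sharp lower bounds for $\RE\bigl(1+zF_\epsilon''/F_\epsilon'\bigr)$.

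First I would fix $f=h+\bar g\in\mathcal F_H^0(G)$ and put $F_\epsilon=h+\epsilon g$ for $|\epsilon|=1$. The defining inequality together with the triangle inequality gives
\[
\left|\frac{F_\epsilon'(z)}{G'(z)}-1\right|\le\left|\frac{h'(z)}{G'(z)}-1\right|+\left|\frac{g'(z)}{G'(z)}\right|<1,\qquad z\in\mathbb D,
\]
so every $F_\epsilon$ lies in the analytic class $\mathcal F(G)=\{F\in\mathcal A:|F'/G'-1|<1\}$. By Lemma~\ref{lem}, $f$ is convex on a disc exactly when each $F_\epsilon$ is; hence the radius of convexity of $\mathcal F_H^0(G)$ coincides with that of $\mathcal F(G)$, and this reduction is where Lemma~\ref{lem} carries the entire harmonic content. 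Moreover, since an analytic $F$ with $|F'/G'-1|<1$ already belongs to $\mathcal F_H^0(G)$ (take $g\equiv0$), sharpness for the harmonic class will follow from sharpness for $\mathcal F(G)$.

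For $\mathcal F(G)$ I would write $p=F'/G'$; from $p(0)=1$ and $|p-1|<1$ we have $p\prec 1+z$, and logarithmic differentiation yields
\[
1+\frac{zF''(z)}{F'(z)}=\left(1+\frac{zG''(z)}{G'(z)}\right)+\frac{zp'(z)}{p(z)}.
\]
The second summand is controlled uniformly: writing $p=1+\omega$ with $\omega$ a Schwarz function and optimising pointwise over $\omega$ shows $\RE\bigl(zp'/p\bigr)\ge -r/(1-r)$ on $|z|=r$, attained by $p(z)=1-z$; crucially this pointwise infimum is independent of $\arg z$, so it may be added to the minimum of the first summand with no alignment issue. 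For the first summand I would insert the classical sharp distortion bounds for $\RE(1+zG''/G')$: the Koebe bound $(1-4r+r^2)/(1-r^2)$ for $\mathcal S$ and $\mathcal S^*$, the half-plane bound $(1-r)/(1+r)$ for $\mathcal K$, and $(1-2r-r^2)/(1-r^2)$ for $\mathcal R$. Adding $-r/(1-r)$ collapses these to $(1-5r)/(1-r^2)$, $(1-3r)/(1-r^2)$ and $(1-3r-2r^2)/(1-r^2)$, whose first positive zeros are $1/5$, $1/3$ and $(\sqrt{17}-3)/4$, giving parts $(a)$--$(d)$. Sharpness comes from $F'/G'=1\pm z$ together with the rotated extremal $G$ of each class.

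The delicate part is $(e)$. Here $\RE G'>1/2$ is equivalent to $G'=1/(1-w)$ for a Schwarz function $w$, so $1+zG''/G'=1+zw'/(1-w)$ is itself a subordination-type term. Unlike the classes in $(a)$--$(d)$, the minimum of $\RE\bigl(1+zw'/(1-w)\bigr)$ over all Schwarz $w$ is \emph{not} attained by a single Blaschke factor once $r>1/3$: parametrising $w(\zeta)=\zeta v(\zeta)$ and minimising over the admissible disc of values $(w(z),w'(z))$ leads to an interior critical point $t_*=1-\sqrt{2(1-r^2)}$ of the resulting real extremal problem, at which the sharp bound equals $4(q-1)/q^2$ with $q=\sqrt{2(1-r^2)}$. (The reason $\mathcal R$ escapes this is a fortuitous cancellation of a factor $1+t$ that is absent here.) Imposing $4(q-1)/q^2=r/(1-r)$ and clearing denominators produces exactly $r^4+2r^3+13r^2+4r-4=0$. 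I expect this joint optimisation over general Schwarz functions---locating the interior extremal and exhibiting the corresponding extremal mapping---to be the only genuinely demanding step; everything else is the same bookkeeping as in Theorem~\ref{th4.7}.
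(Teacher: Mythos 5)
Your proposal follows exactly the paper's route: pass to $F_\epsilon=h+\epsilon g$, observe $|F_\epsilon'/G'-1|<1$, and invoke Lemma \ref{lem} to transfer the analytic radius of convexity to the harmonic class. The only difference is that the paper simply cites Ratti's results for the analytic radii while you re-derive them (correctly --- your bounds and the resulting equations, including $r^4+2r^3+13r^2+4r-4=0$ in part $(e)$, check out), so this is the same proof with the cited ingredient filled in.
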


\begin{proof}
If $f=h+\bar{g}\in \mathcal{F}_{H}^{0}(G)$, then it is easy to see that the analytic functions $F_{\epsilon}=h+\epsilon g$ ($|\epsilon|=1$) satisfy
\[\left| \frac{F_{\epsilon}'(z)}{G'(z)}-1\right|<1, \quad z \in \mathbb{D}.\]
The proof of the theorem now follows by using the results of \cite{ratti2} and applying Lemma \ref{lem}.
\end{proof}

\end{document}